\theoremstyle{plain}
\newtheorem{teo}{Theorem}[section]
\newtheorem{prop}[teo]{Proposition}
\newtheorem{lem}[teo]{Lemma}
\newtheorem{cor}[teo]{Corollary}
\theoremstyle{definition}
\newtheorem{defi}[teo]{Definition}
\newtheorem{ejem}[teo]{Example}
\theoremstyle{remark}
\newtheorem{remark}[teo]{Remark}
\renewcommand{\geq}{\geqslant}
\renewcommand{\leq}{\leqslant}
\newcommand{\N}{\mathbb{N}}
\newcommand{\R}{\mathbb{R}}
\newcommand{\SA}{\operatorname{LipSNA}}
\newcommand{\Lip}{{\mathrm{Lip}}_0}
\newcommand{\Lipc}{{\mathrm{Lip}}_{0K}}
\newcommand{\diam}{\operatorname{diam}}
\newcommand{\cco}{\overline{\operatorname{co}}}
\newcommand{\Mol}{\operatorname{Mol}}
\newcommand{\F}{\mathcal{F}}
\newcommand{\eps}{\varepsilon}
\begin{document}

\title[Stability of the Bishop--Phelps--Bollob\'{a}s property for Lipschitz maps]{Some stability properties for the Bishop--Phelps--Bollob\'{a}s property for Lipschitz maps}

\author[R.~Chiclana]{Rafael Chiclana}

\author[M.~Mart\'in]{Miguel Mart\'in}

\address{Universidad de Granada, Facultad de Ciencias.
Departamento de An\'{a}lisis Matem\'{a}tico, 18071-Granada
(Spain)}
\email{rchiclana@ugr.es, mmartins@ugr.es}

\thanks{Research partially supported by projects PGC2018-093794-B-I00 (MCIU/AEI/FEDER, UE) and FQM-185 (Junta de Andaluc\'{\i}a/FEDER, UE)}

\keywords{Lipschitz function; Lipschitz map; Lipschitz-free space; norm attaining operators; Bishop-Phelps-Bollobas property; metric space}

\subjclass[2010]{Primary 46B04; Secondary 46B20, 26A16, 54E50}

\date{April 20th, 2020}

\begin{abstract}
	We study the stability behavior of the Bishop-Phelps-Bollob\'{a}s property for Lipschitz maps (Lip-BPB property). This property is a Lipschitz version of the classical Bishop-Phelps-Bollob\'{a}s property and deals with the possibility of approximating a Lipschitz map that almost attains its (Lipschitz) norm at a pair of distinct points by a Lipschitz map attaining its norm at a pair of distinct points (relatively) very closed to the previous one. We first study the stability of this property under the (metric) sum of the domain spaces. Next, we study when it is possible to pass the Lip-BPB property from scalar functions to some vector-valued maps, getting some positive results related to the notions of $\Gamma$-flat operators and $ACK$ structure.
	We get sharper results for the case of Lipschitz compact maps. The behaviour of the property with respect to absolute sums of the target space is also studied. We also get results similar to the above ones about the density of strongly norm attaining Lipschitz maps and of Lipschitz compact maps.
\end{abstract}

\maketitle

\thispagestyle{plain}

\section{Introduction}
A \textit{pointed metric space} is just a metric space $M$ in which we have distinguished an element, denoted by $0$, that acts like the center of the metric space. All along this paper, metric spaces will be complete and Banach spaces will be over the real scalars. Given a Banach space $X$, the closed unit ball and the sphere of $X$ will be denoted by $B_X$ and $S_X$ respectively. If $Y$ is another Banach space, $\operatorname{L}(X,Y)$ will denote the space of bounded linear operators from $X$ to $Y$. In the case $Y=\mathbb{R}$, we simply write $X^*$. Given a pointed metric space $M$ and a Banach space $Y$, $\Lip(M,Y)$ denotes the vector space of Lipschitz maps from $M$ to $Y$ that vanish at $0$. This vector space becomes a Banach space when it is endowed with the norm
\[  \|F\|_L = \sup \left \{\frac{\|F(p)-F(q)\|}{d(p,q)} \colon p, q \in M,\, p\neq q \right \} \quad \forall \, F \in \Lip(M,Y).\]
We say that a Lipschitz map $F\colon M \longrightarrow Y$ \textit{strongly attains its norm} if the above supremum is a maximum, that is, there exist distinct points $p$, $q \in M$ such that
\[ \frac{\|F(p)-F(q)\|}{d(p,q)}=\|F\|_L. \]
We write $\SA(M,Y)$ to denote the subset of those Lipschitz maps from $M$ to $Y$ which strongly attain their norm. If $M$ is a finite metric space, it is clear that every Lipschitz map strongly attains its norm. One the other hand, it is easy to show that for any infinite metric space $M$ it is possible to find a Lipschitz function $F\colon M \longrightarrow \mathbb{R}$ which does not strongly attain its norm (see Corollary 3.46 in \cite{wea2}).

In view of this, a natural question appears: Is it always possible to approximate a Lipschitz map by strongly norm attaining Lipschitz maps? The negative answer to this question was given in \cite[Example~2.1]{kms}, where it is shown that this is not the case, among other examples, for $M=[0,1]$ and $Y=\R$. On the other hand, the first positive examples appeared in \cite[\S 5]{Godefroy-survey-2015}, which include the case when $M$ is a compact H\"{o}lder metric space and $Y$ is finite dimensional. The question is then reformulated in \cite[Problem~6.7]{Godefroy-survey-2015}: for which metric spaces $M$ and Banach spaces $Y$ is the subset $\SA(M,Y)$ dense in $\Lip(M,Y)$?  After that, several papers studying this problem have appeared (see \cite[Section 7]{lppr}, \cite{articulo1}, \cite{SNA_2}). Analyzing the results given in those papers (which we will briefly comment later on), it is possible to extract a common idea in them: a Lipschitz map is identified with a bounded linear operator between Banach spaces. This allows to apply some results of the classical theory of norm attaining linear operators to obtain results in the Lipschitz context. In order to understand this identification, we need to introduce the Lipschitz-free space over a metric space.

Let $M$ be a pointed metric space. We denote by $\delta$ the canonical isometric embedding of $M$ into $\Lip(M,\mathbb{R})^*$, which is given by
\[ \langle f, \delta(p)\rangle=f(p) \quad \forall \, p \in M, \quad \forall \, f \in \Lip(M,\mathbb{R}).\]
We denote by $\mathcal{F}(M)$ the norm-closed linear span of $\delta(M)$, that is,
\[ \mathcal{F}(M)=\overline{\operatorname{span}}\{ \delta(p) \colon p \in M \}\subseteq \Lip(M,\mathbb{R})^*,\]
which is usually called the \textit{Lipschitz-free space over} $M$. It is well known that $\mathcal{F}(M)$ is an isometric predual of the Banach space $\Lip(M,\mathbb{R})$. Indeed, in \cite{wea2} it was shown that it is the unique isometric predual when $M$ is bounded or a geodesic space. We refer for background to the papers \cite{Godefroy-survey-2015} and \cite{Gode2}, and to the book \cite{wea2} (where these spaces are called \textit{Arens-Eell spaces}). Given a Lipschitz map $F\colon M \longrightarrow Y$, we can consider the unique bounded linear operator $\widehat{F} \colon \mathcal{F}(M)\longrightarrow Y$ satisfying
\[ \widehat{F}(\delta(p))=F(p) \quad \forall \, p \in M. \]
It turns out that the mapping $F \longmapsto \widehat{F}$ is an isometric isomorphism between the spaces $\Lip(M,Y)$ and $\operatorname{L}(\mathcal{F}(M),Y)$. Therefore, we can identify every Lipschitz map from $M$ to $Y$ with a bounded linear operator from $\mathcal{F}(M)$ to $Y$. In order to reformulate our question in terms of this identification, we need to introduce the notion of \textit{molecule} of $\mathcal{F}(M)$, which is just an element of the form
\[ m_{p,q}=\frac{\delta(p)-\delta(q)}{d(p,q)},  \quad \mbox{where } p,q \in M, \, p\neq q.\]
We write $\Mol(M)$ to denote the set of all molecules of $\mathcal{F}(M)$. As a consequence of Hahn-Banach theorem, it is easy to see that every molecule has norm one and that we can recover the unit ball of $\mathcal{F}(M)$ as the closed convex hull of the molecules, that is,
\[ B_{\mathcal{F}(M)} = \cco(\Mol(M)).\]
Now, we can reformulate our question using the notation we have introduced. It is enough to notice that $F$ strongly attains its norm at a pair of distinct points $(p,q)$ of $M$ if, and only if, $\widehat{F}$ attains its norm (in the classical sense) at the molecule $m_{p,q}$, that is, $\|\widehat{F}\|=\|\widehat{F}(m_{p,q})\|$. Then, a Lipschitz map strongly attains its norm if and only if its bounded linear operator associated attains its norm at some molecule. Hence, we are studying for which metric spaces $M$ and Banach spaces $Y$ the set of those bounded linear operators from $\mathcal{F}(M)$ to $Y$ which attain their norm at some molecule is dense in $\operatorname{L}(\mathcal{F}(M),Y)$.
Furthermore, we are also interested in studying the following stronger version of such density, which was introduced in \cite{ChiMar-NA}.

\begin{defi}[\textrm{\cite{ChiMar-NA}}]\label{Lip-BPBp}
	Let $M$ be a pointed metric space and let $Y$ be a Banach space. We say that the pair $(M,Y)$ has the \emph{Lipschitz Bishop-Phelps-Bollob\'{a}s property} (\emph{Lip-BPB property} for short), if given $\varepsilon>0$ there is $\eta(\varepsilon)>0$ such that for every norm-one $F\in \Lip(M,Y)$ and every $p,q\in M$, $p\neq q$ such that $\|F(p)-F(q)\|> \bigl(1-\eta(\varepsilon)\bigr)d(p,q)$, there exist $G \in \Lip(M,Y)$ and $r,s\in M$, $r\neq s$, such that
	\[
	\frac{\|G(r)-G(s)\|}{d(r,s)}=\|G\|_L=1,\quad \|G-F\|_L<\varepsilon, \quad \frac{d(p,r)+d(q,s)}{d(p,q)}<\varepsilon.
	\]
	Equivalently (see \cite[Remark~1.2.a]{ChiMar-NA}), the pair $(M,Y)$ has the Lip-BPB property if and only if given $\varepsilon>0$ there is $\eta(\varepsilon)>0$ such that for every norm-one $\widehat F\in \operatorname{L}(\mathcal{F}(M),Y)$ and every $m\in \Mol(M)$ such that $\|\widehat F(m)\|>1-\eta(\varepsilon)$, there exist $\widehat G \in \operatorname{L}(\mathcal{F}(M),Y)$ and $u \in \Mol(M)$ such that
	\[
	\|\widehat{G}(u)\|=\|G\|_L=1,  \quad \|\widehat F-\widehat G\| <\varepsilon, \quad \|m-u\|<\varepsilon. \]
\end{defi}
If the previous definition holds for a class of linear operators from $\mathcal{F}(M)$ to $Y$, we will say that the pair $(M,Y)$ has the Lip-BPB property for that class.
This definition can be understood as a non-linear generalization of the classical Bishop-Phelps-Bollob\'{a}s property (BPBp for short). It is clear that if a pair $(M,Y)$ has the Lip-BPB property, then $\SA(M,Y)$ will be norm-dense in $\Lip(M,Y)$. On the other hand, the reciprocal result is far from being true. In fact, if $M$ is a finite pointed metric space, while it is clear that $\SA(M,Y)=\Lip(M,Y)$ for every Banach space $Y$, Example 2.5 in \cite{ChiMar-NA} shows that one can find finite pointed metric spaces $M$ and Banach spaces $Y$ such that the pairs $(M,Y)$ fail to have the Lip-BPB property. For this reason, throughout this paper each of these notions of approximation by strongly norm attaining Lipschitz maps will be discussed separately. Moreover, we will also do the same study for Lipschitz compact maps. Let $F\colon M \longrightarrow Y$ be Lipschitz. We say that $F$ is \textit{Lipschitz compact} when its Lipschitz image, that is, the set
\[ \left \{\frac{F(p)-F(q)}{d(p,q)} \colon p,q \in M, \, p\neq q \right \}\subseteq Y,\]
is relatively compact. We denote by $\Lipc(M,Y)$ the space of Lipschitz compact maps from $M$ to $Y$. Some results related to this notion appear in \cite{lipschitzcompact}. One can easily check that $F\colon M \longrightarrow Y$ is Lipschitz compact if, and only if, its associated linear operator $\widehat{F}\colon \mathcal{F}(M) \longrightarrow Y$ is compact, a fact that we will use without further mention. Therefore, if $Y$ is finite-dimensional then every Lipschitz map from $M$ to $Y$ is Lipschitz compact. We denote by $\SA_K(M,Y)$ the set of those Lipschitz compact maps from $M$ to $Y$ which strongly attain their norm, that is,
\[ \SA_K(M,Y)=\SA(M,Y)\cap\Lipc(M,Y).\]
Using the above notation, we are also interested in studying for which pointed metric spaces $M$ and Banach spaces $Y$, either the set $\SA_K(M,Y)$ is dense in $\Lipc(M,Y)$ or $(M,Y)$ has the Lip-BPB property for Lipschitz compact maps.

As we have said, all these questions have been studied before in the papers  \cite{articulo1,SNA_2,ChiMar-NA,lppr,Godefroy-survey-2015}, among others. Let us present some of the known results concerning these types of density. First, it was shown in \cite[Proposition 7.4]{lppr}, extending results from \cite{Godefroy-survey-2015}, that if $M$ is a pointed metric space so that $\mathcal{F}(M)$ has the Radon-Nikodym property (RNP), then $\SA(M,Y)$ is dense in $\Lip(M,Y)$ for every Banach space $Y$. Some cases of metric spaces $M$ for which $\mathcal{F}(M)$ has the RNP are: Uniformly discrete, countable and compact, and compact H\"{o}lder metric spaces (see Example 1.2 in \cite{articulo1} to get references of each result). Furthermore, in the paper \cite{articulo1} were given different properties over $\mathcal{F}(M)$ from the RNP that also ensures strong density for every Banach space. More precisely, they show that if $\mathcal{F}(M)$  satisfies either property $\alpha$ or property quasi-$\alpha$, then $\SA(M,Y)$ is dense in $\Lip(M,Y)$ for every Banach space $Y$. Also, we get the same result if we can find inside $\mathcal{F}(M)$ a uniformly strongly exposed $1$-norming set of molecules, that is, a uniformly strongly exposed set generating the unit ball of $\mathcal{F}(M)$ by closed absolutely convex hull. We refer to \cite[Section 3]{articulo1}, where these sufficient conditions and the relationship between them are discussed. Also, we refer to \cite[Theorem 2.5]{SNA_2}, where it is shown that the Radon-Nikodym property and having a uniformly strongly exposed $1$-norming set of molecules are not necessary conditions to obtain strong density, and they all are distinct.

On the other hand, let us comment some negative results. Recall that \cite[Example 2.1]{kms} proves that for $M=[0,1]$ with its usual metric and $Y=\mathbb{R}$, we have that $\SA([0,1],\mathbb{R})$ is not dense in $\Lip([0,1],\mathbb{R})$. In fact, they proved the same result for geodesic spaces. After that, it was generalized for length spaces (see Theorem 2.2 in \cite{articulo1}), where the lackness of strongly exposed points of $B_{\mathcal{F}(M)}$ seems to be essential in the proof of these results. However, if we consider $\mathbb{T}=\{x \in \mathbb{R}^2 \colon \|x\|_2=1\}$ endowed with the Euclidean metric, then the unit ball of $\mathcal{F}(\mathbb{T})$ is generated by its strongly exposed molecules (in fact, every molecule is strongly exposed), but Theorem 2.1 in \cite{SNA_2} shows that $\SA(\mathbb{T},\mathbb{R})$ is not dense in $\Lip(\mathbb{T},\mathbb{R})$.

Given a metric space $M$, the last cited result shows that the fact that $B_{\mathcal{F}(M)}$ must be generated by its extreme molecules, or even its strongly exposed molecules, does not guarantee to have density. As an interesting fact, Theorem 3.3 in \cite{SNA_2} proves that if $M$ is a metric space for which $\SA(M,\mathbb{R})$ is dense in $\Lip(M,\mathbb{R})$, then $B_{\mathcal{F}(M)}$ is generated by its extreme molecules, so the reciprocal result works. Moreover, if we assume that $M$ is compact, then Theorem 3.15 in \cite{SNA_2} shows that $B_{\mathcal{F}(M)}$ is indeed generated by its strongly exposed molecules.

With respect to the Lip-BPB property, given a finite pointed metric space $M$ and a Banach space $Y$, \cite[Theorem 2.1]{ChiMar-NA} shows that if $(\mathcal{F}(M),Y)$ has the classical BPBp, then $(M,Y)$ has the Lip-BPB property. This shows, for instance, that if $M$ is finite and $Y$ is finite-dimensional, then $(M,Y)$ has the Lip-BPB property. Moreover, Example 2.5 and Example 2.6 in \cite{ChiMar-NA} show that both finiteness assumptions are needed. It is also proved in that paper that if $M$ is a pointed metric space such that $\Mol(M)$ is a uniformly strongly exposed set, then $(M,Y)$ has the Lip-BPB property for any Banach space $Y$. This is the case when $M$ is concave and $\mathcal{F}(M)$ has property $\alpha$, $M$ is ultrametric, or $M$ is a H\"older metric space, among others.

As another section of the aforementioned paper \cite{ChiMar-NA}, we may also find some results focusing on the vector-valued case and on Lipschitz compact maps. More concretely, \cite[Proposition 4.4]{ChiMar-NA} states that if $M$ is a metric space such that $(M,\mathbb{R})$ has the Lip-BPB property and $Y$ is a Banach space satisfying property $\beta$, then $(M,Y)$ also has the Lip-BPB property. It is also shown that this result can be generalized in the case of the strong density by assuming that $Y$ has property quasi-$\beta$ instead. After that, analogous versions of the results just commented are given for the case of Lipschitz compact maps. Moreover, we can find some results only valid in this context. For instance, Proposition 4.17 in \cite{ChiMar-NA} says that if $M$ is a metric space such that $(M,\mathbb{R})$ has the Lip-BPB property and $Y^*$ is isometrically isomorphic to an $\operatorname{L}_1$-space, then $(M,Y)$ has the Lip-BPB property for Lipschitz compact maps.

After this background, we proceed to outline the content of this paper. The main aim of this work is to study the behavior under some natural operations on the domain or on the range spaces of the density of strongly norm attaining Lipschitz maps, of the Lip-BPB property, and of the corresponding versions for Lipschitz compact maps. The results will complement the study initiated in \cite{ChiMar-NA}.

Let us summarize the results of the paper by sections.

In the second section of this paper we focus our attention on the domain space, studying the behavior of the properties under sums of metric spaces (this sum can be somehow understood as the $\ell_1$-sum of the metric spaces after identifying their centers and can be found in \cite[Definition~1.13]{wea2}). First, Proposition \ref{domain} shows that if $M$ is the sum of two metric spaces $M_1$ and $M_2$ and $Y$ is a Banach space such that $(M,Y)$ has the LipBPB property, then so does $(M_i,Y)$ for $i=1,2$, but the reciprocal result is not true, as shown in Example \ref{example:M1M2no}. For the density of norm attaining Lipschitz maps, the situation is better: if $M$ is the (metric) sum of a family $\{M_i\colon i\in I\}$ of metric spaces and $Y$ is a Banach space, then $\SA(M,Y)$ is dense in $\Lip(M,Y)$ if and only if $\SA(M_i,Y)$ is dense in $\Lip(M_i,Y)$ for every $i\in I$ (Theorem \ref{l1SNA}).
We also obtain analogous results for Lipschitz compact maps.

Next, along the third section we study the problem of having strong density or the Lip-BPB property for vector-valued Lipschitz maps for some Banach space $Y$ assuming that we have such property for scalar-valued Lipschitz functions. Notice that Proposition 4.1 in \cite{ChiMar-NA} shows that if $M$ is a pointed metric space such that $(M,Y)$ has the Lip-BPB property for some Banach space $Y\neq 0$, then $(M,\mathbb{R})$ has the Lip-BPB property. Moreover, Proposition 4.2 in \cite{ChiMar-NA} shows an analogous result for the case of strong density. Therefore, in order to get the vector-valued versions of the last properties, these assumptions are necessary. Let us comment that it is not known if the assumption that $\SA(M,\mathbb{R})$ is dense in $\Lip(M,\mathbb{R})$ is also sufficient to get that $\SA(M,Y)$ is dense in $\Lip(M,Y)$ for every Banach space $Y$. However, Example 2.5 in \cite{ChiMar-NA} shows that the reciprocal result is not true in the case of the Lip-BPB property. In order to get positive results, we follow the recent paper \cite{ACK}, where the class of $\Gamma$\textit{-flat operators} and the notion of $ACK$ \textit{structure} on Banach spaces are introduced. Our main results in this section are Theorem \ref{teoACK} and Theorem \ref{ACKden}, from which we extract several implications about spaces of continuous functions, spaces of sequences, and injective tensor products. Also, we state the analogous versions of these theorems for Lipschitz compact maps, for which we obtain more satisfactory results. In this case, we also get some more results which complements those given in \cite{ChiMar-NA}.

Finally, in the forth section we focus our attention on stability properties related to the range space. First, given a pointed metric space $M$, Proposition \ref{recdomain} shows that if $Y_1$ is an absolute summand of a Banach space $Y$ and $(M,Y)$ has the Lip-BPB property (respectively, $\SA(M,Y)$ is dense), then so does $(M,Y_1)$ (resp.\ $\SA(M,Y_1)$ is dense). Reciprocally, Propositions \ref{Lip-BPBp domain} and \ref{c_0den} show that if $Y$ is a $c_0$ or $\ell_\infty$ sum of a family $\{Y_i\colon i\in I\}$ of Banach spaces such that all the pairs $(M,Y_i)$ have the Lip-BPB property with the same function $\eps\longmapsto \eta(\eps)$ (respectively, $\SA(M,Y_i)$ is dense in $\Lip(M,Y_i)$ for all $i\in I$), then $(M,Y)$ has the Lip-BPB property (respectively, $\SA(M,Y)$ is dense in $\Lip(M,Y)$). Furthermore, we also study some stability results involving continuous functions spaces. The corresponding results for Lipschitz compact maps are also stablished.

\section{Results concerning the domain space: relationship with metric sums}

In this section we will study the behavior of the Lip-BPB property with respect to ``metric" sums on the domain of Lipschitz maps. We need the following definition.

\begin{defi}[\textrm{\cite[Definition~1.13]{wea2}}]\label{l1suma}
Given a family of pointed metric spaces $\{(M_i,d_i)\}_{i\in I}$, the (metric) \emph{sum} of the family is the disjoint union of all $M_i$'s, identifying the base points, endowed with the following metric $d$: $d(x,y)=d_i(x,y)$ if both $x,y\in M_i$, and $d(x,y)=d_i(x,0)+d_j(0,y)$ if $x\in M_i$, $y\in M_j$ and $i\neq j$. We will write $\coprod_{i \in I} M_i$ to denote the sum of the family of metric spaces.
\end{defi}

It is known (c.f.\ e.g.\ \cite{kaufmannPreprint} or \cite[Proposition 3.9]{wea2}) that Lipschitz-free spaces behave well with respect to sums of metric spaces. Indeed, if $M=\coprod_{i \in I} M_i$, then
	\[ \mathcal{F}(M) \cong \left [\bigoplus\nolimits_{i \in I} \mathcal{F}(M_i)\right ]_{\ell_1} \]
	isometrically.

Now, we will present some results which show the good behavior of sums of metric spaces with respect to the Lip-BPB property and the density of $\SA(M,Y)$.
		
		\begin{prop}\label{domain}
			Let $M=M_1\coprod M_2$ be the sum of two pointed metric spaces and let $Y$ be a Banach space. If the pair $(M,Y)$ has the Lip-BPB property, then so does $(M_1,Y)$ and $(M_2,Y)$.
		\end{prop}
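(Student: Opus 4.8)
The plan is to show that the Lip-BPB property passes from $M=M_1\coprod M_2$ to each summand $M_i$. I will prove the statement for $M_1$; the argument for $M_2$ is symmetric. The natural strategy is to extend a given Lipschitz map $F\colon M_1\longrightarrow Y$ to a map $\widetilde F\colon M\longrightarrow Y$ defined on the whole sum, apply the hypothesis on $(M,Y)$ to obtain a norm-attaining approximation $\widetilde G$ close to $\widetilde F$, and then restrict $\widetilde G$ back to $M_1$ to produce the desired $G$. The key structural fact is the isometric decomposition $\mathcal{F}(M)\cong\bigl[\mathcal{F}(M_1)\oplus\mathcal{F}(M_2)\bigr]_{\ell_1}$ recalled just before the statement, which lets me treat $\mathcal{F}(M_1)$ as an $\ell_1$-summand of $\mathcal{F}(M)$.

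First I would fix $\varepsilon>0$ and let $\eta(\varepsilon)>0$ be the function witnessing the Lip-BPB property of $(M,Y)$. Given a norm-one $F\in\Lip(M_1,Y)$ together with distinct $p,q\in M_1$ satisfying $\|F(p)-F(q)\|>(1-\eta(\varepsilon))d(p,q)$, I define the extension $\widetilde F\colon M\longrightarrow Y$ by $\widetilde F\restricted M_1=F$ and $\widetilde F\restricted M_2=0$. Using the metric structure of the sum (points in different summands are joined through the common base point $0$, where both $F$ and the zero map vanish), one checks that this $\widetilde F$ is Lipschitz with $\|\widetilde F\|_L=\|F\|_L=1$; indeed for $x\in M_1$, $y\in M_2$ one has $\|\widetilde F(x)-\widetilde F(y)\|=\|F(x)\|=\|F(x)-F(0)\|\leq d_1(x,0)\leq d(x,y)$. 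The molecule $m_{p,q}$ sits inside $\mathcal{F}(M_1)\subseteq\mathcal{F}(M)$ and satisfies $\|\widehat{\widetilde F}(m_{p,q})\|=\|F(p)-F(q)\|/d(p,q)>1-\eta(\varepsilon)$, so the hypothesis supplies $\widetilde G\in\Lip(M,Y)$ and a molecule $u=m_{r,s}\in\Mol(M)$ with $\|\widetilat{\widetilde G}(u)\|=\|\widetilde G\|_L=1$, $\|\widetilde G-\widetilde F\|_L<\varepsilon$, and $\|m_{p,q}-u\|<\varepsilon$.

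The main obstacle is to guarantee that the approximating pair $(r,s)$ lives inside $M_1$ and that the restriction $G:=\widetilde G\restricted M_1$ still has Lipschitz norm exactly $1$ attained at $m_{r,s}$. The closeness $\|m_{p,q}-m_{r,s}\|<\varepsilon$ alone does not force $r,s\in M_1$, so I expect this to require an $\varepsilon$ small enough (relative to how $m_{p,q}$ is separated from the summand $\mathcal{F}(M_2)$) together with the $\ell_1$-geometry: since $m_{p,q}\in\mathcal{F}(M_1)$, any molecule $m_{r,s}$ with a component in $\mathcal{F}(M_2)$ is pushed away in $\ell_1$-norm, so for small $\varepsilon$ the molecule $u$ must itself come from a pair within a single summand, forcing $r,s\in M_1$. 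This is the delicate point and, if it cannot be arranged directly, the honest route is to reformulate the Lip-BPB property in the operator language (the equivalent form in Definition~\ref{Lip-BPBp}) and exploit that $\mathcal{F}(M_1)$ is a norm-one complemented $\ell_1$-summand, so norm-attainment of $\widehat{\widetilde G}$ at a molecule and control of the restriction behave well under the canonical projection.

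Once $r,s\in M_1$ is secured, the conclusion is routine: $G=\widetilde G\restricted M_1$ satisfies $\|G-F\|_L\leq\|\widetilde G-\widetilde F\|_L<\varepsilon$ and $\|G\|_L\leq\|\widetilde G\|_L=1$, while $\|G(r)-G(s)\|/d(r,s)=\|\widetilde G(r)-\widetilde G(s)\|/d(r,s)=1$ forces $\|G\|_L=1$ with the norm strongly attained at $(r,s)$, and the proximity $(d(p,r)+d(q,s))/d(p,q)<\varepsilon$ (equivalently $\|m_{p,q}-m_{r,s}\|<\varepsilon$ in the operator formulation) is inherited directly. Thus $\eta(\varepsilon)$ witnesses the Lip-BPB property for $(M_1,Y)$ with the same control function, completing the proof.
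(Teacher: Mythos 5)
Your overall strategy coincides with the paper's: extend $F$ by zero on $M_2$, apply the Lip-BPB property of $(M,Y)$ to the extension, and restrict the resulting map back to $M_1$. However, the step you yourself flag as ``the delicate point'' contains a genuine gap, and the heuristic you offer to close it is false. If $u=m_{r,s}$ with $r\in M_1$ and $s\in M_2\setminus\{0\}$, the $\ell_1$-decomposition gives $\|m_{p,q}-m_{r,s}\|\geq \|P_2(m_{r,s})\|=d(0,s)/d(r,s)$, and this quantity can be arbitrarily small when $s$ is close to the base point. So smallness of $\|m_{p,q}-u\|$ does \emph{not} force $r,s\in M_1$, no matter how small $\varepsilon$ is; it only rules out the case $r,s\in M_2$ (where the distance would be $2$). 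Your fallback remark about the norm-one projection onto $\mathcal{F}(M_1)$ gestures in the right direction but is not an argument: the restriction $G_1=\widetilde G\restricted M_1$ need not attain its norm at any molecule of $M_1$ without further work.

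The paper closes exactly this gap with a convexity argument in the mixed case: writing
\[
u=m_{r,s}=\frac{d(r,0)}{d(r,s)}\,m_{r,0}+\frac{d(0,s)}{d(r,s)}\,m_{0,s},
\]
norm attainment of $\widehat{\widetilde G}$ at the convex combination $u$ forces norm attainment at $m_{r,0}\in\Mol(M_1)$, so one replaces $u$ by $m_{r,0}$. One then checks that $m_{r,0}$ is still close to $m_{p,q}$: since $\|\widehat{\widetilde F}(m_{p,q})\|>1-\eta(\varepsilon)$ and $\|m_{p,q}-u\|<\varepsilon$, one gets $\|\widehat{\widetilde F}(u)\|>1-\eta(\varepsilon)-\varepsilon$, and because $\widetilde F$ vanishes on $M_2$ this bounds $d(0,s)/d(r,s)<\eta(\varepsilon)+\varepsilon$, whence $\|m_{p,q}-m_{r,0}\|<5\varepsilon$. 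The worse constant $5\varepsilon$ is harmless for the Lip-BPB property. Without this (or an equivalent) argument your proof is incomplete at its central step.
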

		
		\begin{proof}
			Fix $0<\varepsilon<1$ and let $\eta(\varepsilon)$ be the constant given by the Lip-BPB property of $(M,Y)$, which we may suppose that satisfies $\eta(\varepsilon)<\eps$.  Let $\widehat{F}_1 \in \operatorname{L}(\mathcal{F}(M_1),Y)$ with $\|F_1\|_L=1$ and $m \in \Mol(M_1)$ such that $\|\widehat{F}_1(m)\|>1-\eta(\varepsilon)$. Now, let us define $\widehat{F} \in \operatorname{L}(\mathcal{F}(M),Y)$ by
			\[
			F(p) = \left\{
			\begin{array}{@{}l@{\thinspace}l}
			F_1(p) &\text{ if } p\in M_1,\\
			0 &\text{ if } p \in M_2.\\
			
			\end{array}
			\right.
			\]
			It is easy to see that $\|F\|_L=1$ and $\|\widehat{F}(m)\|>1-\eta(\varepsilon)$, where we see $m$ as a molecule of $\mathcal{F}(M)$. By hypothesis, there exist $\widehat{G} \in \operatorname{L}(\mathcal{F}(M),Y)$ and a molecule $u \in \Mol(M)$ such that
			\[ \|\widehat{G}(u)\|=\|G\|_L=1,\quad\|m-u\|<\varepsilon,\quad\|F-G\|_L<\varepsilon.\]
			Consider $\widehat{G}_1 \in \operatorname{L}(\mathcal{F}(M_1),Y)$ to be the restriction of $\widehat{G}$ to the subspace $\mathcal{F}(M_1)$. Then, it is clear that $\|G_1\|_L\leq \|G\|_L= 1$ and $\|F_1-G_1\|_L\leq \|F-G\|_L<\varepsilon$. Hence, it will be enough to show that $\widehat{G}_1$ attains its norm at a molecule close enough to $m$. Let us write
			\[ u=\frac{\delta_p-\delta_q}{d(p,q)},\]
			where $p$, $q \in M$, $p\neq q$. We distinguish four cases:
			\begin{enumerate}
				\item $p, q \in M_1$:
				In this case  $u$ can be seen as a molecule of $M_1$ and so $\widehat{G}_1$ attains its norm at $u$.
				\item $p, q \in M_2$: Then, note that
				\[ \widehat{F}(u)=\frac{F(p)-F(q)}{d(p,q)}=0,\]
				from where we deduce that $\|\widehat{G}(u)\|<\varepsilon$, a contradiction.
				\item $p \in M_1, q \in M_2$: Let us write $u$ as the following convex combination:
				\[ u=\frac{\delta_p-\delta_q}{d(p,q)}=\frac{\delta_p-\delta_0}{d(p,0)}\frac{d(p,0)}{d(p,q)} + \frac{\delta_0-\delta_q}{d(0,q)}\frac{d(0,q)}{d(p,q)}=m_{p,0}\frac{d(p,0)}{d(p,q)} + m_{0,q}\frac{d(0,q)}{d(p,q)}. \]
				Since $\widehat{G}$ attains its norm at $u$, then it also attains its norm at $m_{p,0} \in \Mol(M_1)$. Hence, $\widehat{G}_1$ attains its norm at  $m_{p,0}$. Also, note that
				\[ \|\widehat{F}(u)\|=\frac{d(p,0)}{d(p,q)}\|\widehat{F}(m_{p,0})\|\leq\frac{d(p,0)}{d(p,q)}.\]
				On the other hand, $\|\widehat{F}(m)\|>1-\eta(\varepsilon)$ and $\|m-u\|<\varepsilon$. Therefore, we must have that $\|\widehat{F}(u)\|>1-\eta(\varepsilon)-\varepsilon$, from where $\frac{d(p,0)}{d(p,q)}>1-\eta(\varepsilon)-\varepsilon$. Consequently, $\frac{d(0,q)}{d(p,q)}<\eta(\varepsilon)+\varepsilon$. Now, note that
				\begin{align*}
				\|m-m_{p,0}\|&=\left \|(m-u)+\left (\frac{d(p,0)}{d(p,q)}-1\right )m_{p,0} + \frac{d(0,q)}{d(p,q)}m_{0,q}\right \|\\&\leq \|m-u\|+2\frac{d(0,q)}{d(p,q)}\leq \|m-u\|+2\eta(\varepsilon)+2\varepsilon<2\eta(\varepsilon)+3\varepsilon<5\varepsilon.
				\end{align*}
				\item $p \in M_2, q \in M_1$: We just have to repeat the previous argument.
			\end{enumerate}
			Consequently, we conclude that $(M_1,Y)$ has the Lip-BPB property. Since the situation is symmetric, we also get that $(M_2,Y)$ has the Lip-BPB property.
		\end{proof}
	
Note that from this result we obtain the next corollary by just observing that for every $j \in I$, we have that $\coprod_{i\in I} M_i \equiv M_j \coprod Z$
for some pointed metric space $Z$.
	
	\begin{cor}
		Let $M=\coprod_{i \in I} M_i$ be the sum of a family $\{M_i\}_{i\in I}$ of pointed metric spaces and let $Y$ be a Banach space. If the pair $(M,Y)$ has the Lip-BPB property, then so does $(M_i,Y)$ for every $i \in I$.
	\end{cor}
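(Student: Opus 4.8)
The plan is to reduce the general family case to the two-summand case already settled in Proposition \ref{domain}. Fix $j\in I$; the goal is to exhibit $M$ as the metric sum of exactly two pointed metric spaces, one of which is $M_j$. To this end, I would set $Z=\coprod_{i\in I\setminus\{j\}}M_i$, the metric sum of all the remaining spaces, and claim that $M=\coprod_{i\in I}M_i$ coincides, as a pointed metric space, with $M_j\coprod Z$.

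The key step is to verify this regrouping directly from Definition \ref{l1suma}. As sets, both $M$ and $M_j\coprod Z$ are the disjoint union of the $M_i$'s with all base points identified, so there is an obvious bijection fixing $0$; it remains to check that the two metrics agree. If $x,y$ lie in a common $M_i$, both definitions return $d_i(x,y)$. If $x\in M_i$ and $y\in M_k$ with $i\neq k$, the metric on $M$ gives $d_i(x,0)+d_k(0,y)$. On the $M_j\coprod Z$ side one checks the possible configurations (both points in $Z$, or one in $M_j$ and one in $Z$): the decisive point is that whenever a point lies in $Z$, its distance to $0$ computed in the sum $Z$ is, again by Definition \ref{l1suma}, its distance to the shared base point inside its own $M_i$. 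Consequently the sum-of-distances-through-$0$ formula reproduces $d_i(x,0)+d_k(0,y)$ in every case, and the identity map is an isometry fixing $0$, so $M\equiv M_j\coprod Z$.

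With this identification in hand, the conclusion is immediate: since $(M,Y)=(M_j\coprod Z,Y)$ has the Lip-BPB property by hypothesis, Proposition \ref{domain} applied to the two summands $M_j$ and $Z$ yields that $(M_j,Y)$ has the Lip-BPB property. As $j\in I$ was arbitrary, this holds for every index, which is the assertion.

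I do not anticipate any genuine obstacle here: the only content beyond invoking Proposition \ref{domain} is the bookkeeping verification that the metric sum is associative in the sense that an arbitrary family can be split off one distinguished summand at a time. The one point worth a moment's care is that $Z$ must be taken with the metric it inherits as a sum, so that distances through $0$ are computed correctly, rather than treated as an abstract subset of $M$; once that is fixed, the isometry check is routine.
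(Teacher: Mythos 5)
Your argument is exactly the paper's: the authors deduce the corollary from Proposition \ref{domain} by the same observation that $\coprod_{i\in I}M_i\equiv M_j\coprod Z$ with $Z=\coprod_{i\in I\setminus\{j\}}M_i$. Your more detailed verification of that isometric regrouping is correct, and the rest matches the paper's one-line proof.
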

	
	The converse result of Proposition \ref{domain} is false, as the next example shows.
\begin{ejem}\label{example:M1M2no}
Let $M_1=\{0,1\}$ and $M_2=\{1,2\}$ viewed as subsets of $\R$ with the usual metric and consider $1$ as base point for both spaces. First, observe that $M=M_1\coprod M_2$ is isometric to the subset $\{0,1,2\}$ of $\R$ with the usual metric.
Now, the pairs $(M_i,Y)$ has the Lip-BPB property for $i=1,2$ and every Banach space $Y$ (this is obvious as the spaces $\mathcal{F}(M_1)$ and $\mathcal{F}(M_2)$ are one-dimensional), but for every strictly convex Banach space $Y$ which is not uniformly convex, the pair $(M,Y)$ fails the Lip-BPB property, see \cite[Example~2.5]{ChiMar-NA}.
\end{ejem}
	
In the case of the density of $\SA(M,Y)$, we actually get a characterization, as the following result shows.
	
	\begin{teo}\label{l1SNA}
		Let $\{M_i\}_{i \in I}$ be a family of pointed metric spaces, consider the sum $M=\coprod_{i\in I} M_i$ and let $Y$ be a Banach space. Then the following are equivalent:
		\begin{enumerate}[(1)]
			\item $\SA(M_i,Y)$ is dense in $\Lip(M_i,Y)$ for every $i \in I$.
			\item $\SA(M,Y)$ is dense in $\Lip(M,Y)$.
		\end{enumerate}
	\end{teo}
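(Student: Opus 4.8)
The plan rests on the $\ell_1$-decomposition $\F(M)\cong\left[\bigoplus_{i\in I}\F(M_i)\right]_{\ell_1}$ recalled above. The first step, which I would isolate as a preliminary observation, is the identity
\[
\|\Phi\|_L=\sup_{i\in I}\bigl\|\Phi\restricted M_i\bigr\|_L \qquad \text{for every } \Phi\in\Lip(M,Y),
\]
where $\Phi\restricted M_i$ denotes the restriction (which again vanishes at $0$, since the base points are identified). To prove it I would use the description of the molecules of $M$: a molecule $m_{p,q}$ with $p,q$ in the same $M_i$ already belongs to $\Mol(M_i)$, while one with $p\in M_i$, $q\in M_j$, $i\neq j$, is the convex combination $m_{p,q}=\tfrac{d(p,0)}{d(p,q)}\,m_{p,0}+\tfrac{d(0,q)}{d(p,q)}\,m_{0,q}$ of molecules living in single pieces, exactly as in case (3) of the proof of Proposition \ref{domain}. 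Since $m\mapsto\|\widehat\Phi(m)\|$ is convex, the supremum of $\|\widehat\Phi\|$ over $\Mol(M)$ is already attained on molecules supported in a single piece, which gives the displayed formula. The same convexity yields a fact I will use repeatedly: if $\widehat\Phi$ attains its norm at a cross molecule $m_{p,q}=\lambda m_{p,0}+(1-\lambda)m_{0,q}$ with $\lambda\in(0,1)$, then it attains its norm at each of $m_{p,0}$ and $m_{0,q}$ separately.

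For the implication $(2)\Rightarrow(1)$ I would argue as in Proposition \ref{domain}. Fixing $i\in I$, a norm-one $F_i\in\Lip(M_i,Y)$ and $0<\eps<1/2$, extend $F_i$ to $F\in\Lip(M,Y)$ by declaring it $0$ on every $M_j$ with $j\neq i$; by the identity $\|F\|_L=1$. Using the density of $\SA(M,Y)$ I find $G\in\SA(M,Y)$ with $\|G-F\|_L<\eps$, and set $G_i=G\restricted M_i$, so $\|G_i-F_i\|_L<\eps$. The only point to check is that $G_i$ itself strongly attains its norm. Here $\|G_i\|_L>1-\eps$, while $\|G_j\|_L=\|G_j-F_j\|_L<\eps$ for $j\neq i$, so the identity gives $\|G\|_L=\|G_i\|_L>1-\eps>\eps$. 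If $G$ attained its norm at a molecule meeting some $M_j$ with $j\neq i$, the convex-combination fact would force $\|\widehat G\|$ to be attained at a molecule of that $M_j$, contradicting $\|G_j\|_L<\eps<\|G\|_L$; hence $G$ attains its norm at a molecule of $M_i$, and $G_i\in\SA(M_i,Y)$.

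The implication $(1)\Rightarrow(2)$ is the substantial one. Given a norm-one $F\in\Lip(M,Y)$ and $\eps>0$, the identity provides $i_0\in I$ with $\|F_{i_0}\|_L>1-\delta$ for a $\delta$ to be chosen. Using the density of $\SA(M_{i_0},Y)$ I perturb $F_{i_0}$ to a strongly norm-attaining map and, after a harmless rescaling, obtain $H\in\SA(M_{i_0},Y)$ with $\|H\|_L=1$ and $\|H-F_{i_0}\|_L$ small. I then define $G\in\Lip(M,Y)$ by setting $G=H$ on $M_{i_0}$ and $G=(1-\theta)F_j$ on each $M_j$ with $j\neq i_0$, for a small $\theta>0$. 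The scaling guarantees $\|G_j\|_L\leq(1-\theta)<1=\|H\|_L$, so by the identity $\|G\|_L=1$, and this value is attained at the molecule of $M_{i_0}$ at which $H$ attains its norm; thus $G\in\SA(M,Y)$. Finally $\|G-F\|_L=\max\{\|H-F_{i_0}\|_L,\ \sup_{j\neq i_0}\theta\|F_j\|_L\}$, so choosing $\delta$ and $\theta$ smaller than $\eps$ finishes the argument.

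I expect the main obstacle to be bookkeeping of constants: ensuring that the norm of the glued map $G$ is attained only through the piece $M_{i_0}$ in $(1)\Rightarrow(2)$, and that the approximant coming from the whole space cannot attain its norm on the wrong pieces in $(2)\Rightarrow(1)$. Both difficulties dissolve using the same two ingredients — the norm identity $\|\cdot\|_L=\sup_i\|\cdot\restricted M_i\|_L$ and the convexity fact about cross molecules — so once the preliminary observation is in place the remainder is a careful but routine choice of parameters.
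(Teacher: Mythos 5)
Your proof is correct and follows essentially the same route as the paper's: both implications rest on the identity $\|F\|_L=\sup_i\|F\restricted M_i\|_L$ coming from the $\ell_1$-decomposition of $\F(M)$, using extension-by-zero and restriction for $(2)\Rightarrow(1)$, and replacement of the near-maximal component together with a scaling-down of the remaining components for $(1)\Rightarrow(2)$. Your convexity lemma for excluding cross molecules is a slightly tidier packaging of the paper's case analysis, but the substance is identical.
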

	
	\begin{proof}
		(1) $\Rightarrow$ (2)
		Let us consider the natural embeddings $E_i \colon \mathcal{F}(M_i) \longrightarrow \mathcal{F}(M)$ and the natural projections $P_i \colon \mathcal{F}(M) \longrightarrow \mathcal{F}(M_i)$ for every $i \in {I}$. Fix $\varepsilon>0$ and take $\widehat{F} \in L(\mathcal{F}(M),Y)\cong \Lip(M,Y)$. Without loss of generality, we may assume that $\| F \|_L=1$. Using that $\| F\|_L=\sup\{ \|\widehat{F} E_i\| \colon i \in I\}$ we can find $h \in I$ such that
		$ \|\widehat{F}E_h\|> \|F\|_L -\varepsilon$. By hypothesis, we can find $G_h \in \SA(M_h,Y)$ verifying $\|G_h\|_L= \|\widehat{F}E_h\|$ and $\|\widehat{G}_h-\widehat{F}E_h\|\leq \varepsilon$. Let us define $\widehat{G} \in \operatorname{L}(\mathcal{F}(M),Y)$ by
		\[ \widehat{G}E_i=(1-\varepsilon) \widehat{F}E_i\, \mbox{ for } i \in I, i \neq h \quad  \mbox{ and }\quad  \widehat{G}E_h=\widehat{G}_h. \]
		Then, $\|G\|_L=\sup\{\|\widehat{G}E_i\|\colon i \in I\}=\|G_h\|_L $ and $\| G-F\|_L=\sup\{\|(\widehat{G}-\widehat{F})E_i\|\colon i \in I\} \leq \varepsilon$. Moreover, note that if we take a molecule $m_{p_h,q_h} \in \mathcal{F}(M_h)$ such that $ \|\widehat{G}_h(m_{p_h,q_h})\| =\|G_h\|_L$ then, if we consider the molecule $E_h(m_{p_h,q_h}) \in \mathcal{F}(M)$, we will have that $ \|\widehat{G}(E_h(m_{p_h,q_h}))\|  = \| \widehat{G}_h (m_{p_h,q_h}) \|=\|G_h\|_L=\|G\|_L$. Hence, $G \in \SA(M,Y)$.

		(2) $\Rightarrow$ (1) Fix $\varepsilon >0$, $h \in I$ and take $\widehat{F}_h \in \Lip(M_h,Y)$. As above, we may assume that $\|F_h\|_L=1$. Let us define $\widehat{F} \colon \mathcal{F}(M) \longrightarrow Y$ by $\widehat{F}=\widehat{F}_hP_h$. Then, it is clear that $\| F\|_L=\|F_h\|_L=1$. By hypothesis, we can find $G \in \Lip(M,Y)$ such that $\|G\|_L=1$ and $\|G-F\|_L\leq \varepsilon$. Now, define $\widehat{G}_h \colon \mathcal{F}(M_h)\longrightarrow Y$ by $\widehat{G}_h=\widehat{G}E_h$. Then, $\|G_h\|_L\leq 1$ and $\|\widehat{G}_h-\widehat{F}_h\|=\|\widehat{G}E_h-\widehat{F}E_h\|\leq \|G-F\|_L\leq \varepsilon$, so we just have to see that $G_h \in \SA(M_h,Y)$. To this end, consider a molecule $m_{p,q} \in \mathcal{F}(M)$ such that $\| \widehat{G}(m_{p,q}) \| =\|G\|_L=1$. We claim that $P_h(m_{p,q})$ is a molecule of $\mathcal{F}(M_h)$. Then, we would have that $$\| \widehat{G}_h(P_h(m_{p,q})) \|=\| \widehat{G}(m_{p,q}) \| = \|G\|_L=\|G_h\|_L=1.$$
		Hence, $G_h \in \SA(M_h,Y)$ and the result would be proved. Indeed, assume that $P_h(m_{p,q})$ is not a molecule of $\mathcal{F}(M_h)$. Then, either $p\notin M_h$ or $q \notin M_h$. If we assume $q \notin M_h$, we will have that $P_h(\delta_q)=0$, but $\widehat{G}_h$ attains its norm at $P_h(m_{p,q})$, so $P_h(m_{p,q})\neq0$, which implies that $p \in M_h$. Finally, observe that
		\[\| \widehat{G}_h(P_h(m_{p,q})) \|=\frac{\widehat{G}_h(P_h(\delta_p))-\widehat{G}_h(P_h(\delta_q))}
		{d(p,q)}=\frac{\widehat{G}_h(\delta_p)-\widehat{G}_h(\delta_0)}{d(p,q)}<\frac{\widehat{G}_h(\delta_p)-\widehat{G}_h(\delta_0)}{d(p,0)} \leq \|G_h\|_L, \]
		a contradiction. The case $p \notin M_h$ is analogous to the above one.
	\end{proof}

Let us show that the previous results also work for Lipschitz compact maps.	

	\begin{prop}\label{domaincompact}
		Let $M=M_1\coprod M_2$ be the sum of two pointed metric spaces and let $Y$ be a Banach space. If the pair $(M,Y)$ has the Lip-BPB property for Lipschitz compact maps, then so do $(M_1,Y)$ and $(M_2,Y)$.
	\end{prop}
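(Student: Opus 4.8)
The plan is to follow the proof of Proposition \ref{domain} essentially verbatim, inserting two observations that guarantee the compactness condition is preserved at the only two points where the construction alters an operator. Recall that a Lipschitz map $F$ is Lipschitz compact exactly when its associated operator $\widehat F$ is compact, and that the class of compact operators is a two-sided operator ideal: the composition of a compact operator with a bounded operator, on either side, is again compact. This ideal property is what makes the adaptation routine, and it is really the only new ingredient needed.

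Concretely, I would begin, as before, with a norm-one \emph{Lipschitz compact} map $\widehat F_1 \in \operatorname{L}(\mathcal{F}(M_1),Y)$ and a molecule $m \in \Mol(M_1)$ with $\|\widehat F_1(m)\|>1-\eta(\varepsilon)$, where now $\eta$ is the function witnessing the Lip-BPB property \emph{for Lipschitz compact maps} of $(M,Y)$. Extending $\widehat F_1$ to $\widehat F \in \operatorname{L}(\mathcal{F}(M),Y)$ by $F=0$ on $M_2$ amounts, under the identification $\mathcal{F}(M) \cong [\mathcal{F}(M_1)\oplus \mathcal{F}(M_2)]_{\ell_1}$, to setting $\widehat F = \widehat F_1 P_1$, where $P_1\colon \mathcal{F}(M)\to \mathcal{F}(M_1)$ is the natural projection. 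Since $\widehat F_1$ is compact and $P_1$ is bounded, $\widehat F$ is compact, so $F\in\Lipc(M,Y)$ and the hypothesis does apply to it; exactly as in Proposition \ref{domain} one has $\|F\|_L=1$ and $\|\widehat F(m)\|>1-\eta(\varepsilon)$.

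Applying the Lip-BPB property for Lipschitz compact maps of $(M,Y)$ now yields a \emph{Lipschitz compact} $\widehat G\in \operatorname{L}(\mathcal{F}(M),Y)$ and a molecule $u$ satisfying the three approximation estimates. I would then set $\widehat G_1 = \widehat G E_1$, the restriction of $\widehat G$ to $\mathcal{F}(M_1)$, where $E_1\colon \mathcal{F}(M_1)\to \mathcal{F}(M)$ is the natural embedding. Because $\widehat G$ is compact and $E_1$ is bounded, $\widehat G_1$ is compact, that is, $G_1\in\Lipc(M_1,Y)$; one still has $\|G_1\|_L\leq 1$ and $\|F_1-G_1\|_L<\varepsilon$. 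The remaining four-case analysis on $u=(\delta_p-\delta_q)/d(p,q)$ is identical to that of Proposition \ref{domain}: the case $p,q\in M_2$ and the mixed cases are ruled out or reduced exactly as there, producing a molecule $m_{p,0}\in\Mol(M_1)$ at which $\widehat G_1$ attains its norm, with $\|m-m_{p,0}\|<5\varepsilon$.

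I do not anticipate any genuine obstacle: the entire geometric content already lives in Proposition \ref{domain}, and the only thing to verify is that compactness survives the extension $\widehat F_1 P_1$ and the restriction $\widehat G E_1$, each time by the ideal property quoted above. The symmetry between $M_1$ and $M_2$ then delivers the conclusion for $(M_2,Y)$ as well. The only mild care required is the usual reparametrisation absorbing the constant $5$ into the modulus $\eta$, which is standard and identical to the scalar case.
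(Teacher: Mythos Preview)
Your proposal is correct and follows exactly the approach of the paper, which simply says to repeat the proof of Proposition~\ref{domain} for a Lipschitz compact $F_1$ and observe that the approximating map is then Lipschitz compact as well. Your write-up is just a more explicit version of that remark, spelling out the ideal property at the two places (the extension $\widehat F_1 P_1$ and the restriction $\widehat G E_1$) where compactness needs to be checked.
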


	\begin{proof}
		The result follows by repeating the proof of Proposition \ref{domain} for a Lipschitz compact map $F_1$ observing that, in such case, the strongly norm attaining Lipschitz map which approximates $F_1$ is Lipschitz compact too.
	\end{proof}

	From this result we obtain the following corollary.
	
	\begin{cor}
		Let $M=\coprod_{i \in I} M_i$ be the sum of pointed metric spaces and let $Y$ be a Banach space. If the pair $(M,Y)$ has the Lip-BPB property for Lipschitz compact maps, then so does $(M_i,Y)$ for every $i \in I$.
	\end{cor}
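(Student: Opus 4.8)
The plan is to reduce the statement to the two-summand case already established in Proposition \ref{domaincompact}, exactly mirroring the reduction used for the corollary following Proposition \ref{domain}. I would fix an arbitrary index $j \in I$ and set $Z = \coprod_{i \in I,\, i\neq j} M_i$. The first step is to verify that the metric sum is, in the appropriate sense, associative, so that
\[ M = \coprod_{i \in I} M_i \equiv M_j \coprod Z \]
isometrically, with the common base point preserved. This follows directly from Definition \ref{l1suma}: two points lying in the same summand $M_i$ retain their original distance $d_i$, whereas two points in distinct summands have distance equal to the sum of their distances to the base point, computed through $0$. Grouping all the summands with index different from $j$ into the single space $Z$ alters none of these values, since the base point is shared by every summand and the cross-component distances pass through it.

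Once this identification is in place, the second step is simply to apply Proposition \ref{domaincompact} to the decomposition $M \equiv M_j \coprod Z$: since $(M,Y)$ has the Lip-BPB property for Lipschitz compact maps, the proposition yields that $(M_j,Y)$ has it as well. As $j \in I$ was arbitrary, this gives the conclusion for every $i \in I$.

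I do not expect any genuine obstacle here, as the result is a formal consequence of the two-summand statement; the only point requiring (routine) care is the associativity of the metric sum, namely checking that collapsing the summands $\{M_i : i \neq j\}$ into $Z$ produces an isometric copy of $M$ respecting base points. This verification is immediate from the explicit description of the metric $d$ in Definition \ref{l1suma}, and is precisely the observation already recorded in the excerpt for the analogous corollary to Proposition \ref{domain}.
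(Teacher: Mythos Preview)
Your proposal is correct and matches the paper's approach exactly: the paper derives this corollary from Proposition~\ref{domaincompact} by the same observation already recorded after Proposition~\ref{domain}, namely that $\coprod_{i\in I} M_i \equiv M_j \coprod Z$ for a suitable pointed metric space $Z$.
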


	In the same way as in the general case, the converse of Proposition \ref{domaincompact} is not true, as the same Example \ref{example:M1M2no} shows. And again, the analogous result for the density of $\SA_K(M,Y)$ is more satisfactory.
	
	\begin{prop}
			Let $\{M_i\}_{i \in I}$ be a family of pointed metric spaces, let $Y$ be a Banach space, and consider $M=\coprod_{i\in I} M_i$. Then the following are equivalent:
		\begin{enumerate}[(1)]
			\item $\SA_K(M_i,Y)$ is dense in $\Lipc(M_i,Y)$ for every $i \in I$.
			\item $\SA_K(M,Y)$ is dense in $\Lipc(M,Y)$.
		\end{enumerate}
	\end{prop}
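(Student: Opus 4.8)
The plan is to imitate the proof of Theorem \ref{l1SNA} almost verbatim, the only additional task being to verify that compactness is preserved by each operator constructed there. Recall that under the identification $\mathcal{F}(M)\cong\bigl[\bigoplus_{i\in I}\mathcal{F}(M_i)\bigr]_{\ell_1}$, a Lipschitz map is Lipschitz compact exactly when its associated linear operator is compact, and that compactness is stable under composition with bounded operators on either side and under finite sums. With the natural embeddings $E_i\colon\mathcal{F}(M_i)\to\mathcal{F}(M)$ and projections $P_i\colon\mathcal{F}(M)\to\mathcal{F}(M_i)$ as in Theorem \ref{l1SNA}, these three stability facts are all that is needed on top of the non-compact argument.

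For the implication $(1)\Rightarrow(2)$, I would start from a compact $\widehat F\in\operatorname{L}(\mathcal{F}(M),Y)$ with $\|F\|_L=1$ and pick $h\in I$ with $\|\widehat F E_h\|>1-\eps$, exactly as before. The restriction $\widehat F E_h$ is compact, being the composition of the compact $\widehat F$ with the bounded $E_h$; hence hypothesis $(1)$ furnishes a \emph{compact} approximant $\widehat G_h\in\SA_K(M_h,Y)$ with $\|G_h\|_L=\|\widehat F E_h\|$ and $\|\widehat G_h-\widehat F E_h\|\leq\eps$. The operator $\widehat G$ defined by $\widehat G E_i=(1-\eps)\widehat F E_i$ for $i\neq h$ and $\widehat G E_h=\widehat G_h$ then admits the closed form
\[
\widehat G=(1-\eps)\widehat F+\bigl(\widehat G_h-(1-\eps)\widehat F E_h\bigr)P_h,
\]
which is compact: the first summand is a scalar multiple of the compact $\widehat F$, and the second is the composition of the bounded projection $P_h$ with the compact operator $\widehat G_h-(1-\eps)\widehat F E_h$. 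All remaining norm estimates, and the fact that $\widehat G$ attains its norm at $E_h(m_{p_h,q_h})$, are copied verbatim from Theorem \ref{l1SNA}, so $G\in\SA_K(M,Y)$.

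For $(2)\Rightarrow(1)$, starting from a compact $\widehat F_h\in\Lipc(M_h,Y)$ with $\|F_h\|_L=1$, the operator $\widehat F=\widehat F_h P_h$ is compact since $P_h$ is bounded; thus assumption $(2)$ provides a compact $G\in\SA_K(M,Y)$ with $\|G\|_L=1$ and $\|G-F\|_L\leq\eps$. Its restriction $\widehat G_h=\widehat G E_h$ is again compact, and the same argument as in Theorem \ref{l1SNA}—the molecule at which $\widehat G$ attains its norm projects under $P_h$ to a molecule of $\mathcal{F}(M_h)$—shows that $G_h$ strongly attains its norm. Hence $G_h\in\SA_K(M_h,Y)$, giving the desired approximation.

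The only genuinely new point, and thus the step to watch, is the compactness of the glued operator $\widehat G$ in $(1)\Rightarrow(2)$, since it is prescribed piecewise on the $\ell_1$-summands and it is not a priori obvious that such a definition yields a compact operator. Rewriting it in the closed form above, as the sum of a multiple of $\widehat F$ and an operator that factors through $P_h$, disposes of this at once; everything else is a transcription of the non-compact proof of Theorem \ref{l1SNA}.
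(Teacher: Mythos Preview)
Your proposal is correct and follows exactly the same approach as the paper, which simply remarks that it suffices to note that the operators $\widehat G$ and $\widehat G_h$ constructed in the proof of Theorem~\ref{l1SNA} are compact whenever $\widehat F$ and $\widehat F_h$ are. Your closed-form rewriting $\widehat G=(1-\eps)\widehat F+\bigl(\widehat G_h-(1-\eps)\widehat F E_h\bigr)P_h$ is a clean way to make explicit the compactness verification that the paper leaves to the reader.
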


	\begin{proof}
		It is enough to note that the operators $\widehat{G}$ and $\widehat{G}_h$ defined in the proof of Theorem \ref{l1SNA} are compact when the operators $\widehat{F}$ and $\widehat{F}_h$ are compact.
	\end{proof}

\section{From scalar functions to vector-valued maps}

	Our aim in this section is to study the problem of passing from the Lip-BPB property for scalar-valued functions to some vector-valued maps and the problem of passing from the density of $\SA(M,\R)$ to the density of $\SA(M,Y)$ for some $Y$'s. Let us comment that one can find in \cite{ChiMar-NA} examples of metric spaces $M$ for which there are Banach spaces $Y$ so that the pairs $(M,\R)$ have the Lip-BPB property, but the pairs $(M,Y)$ fail to have the Lip-BPB property. Actually, our Example~\ref{example:M1M2no} contains one of such metric spaces: $M=\{0,1,2\}$ with the usual metric coming from $\R$. We will present sufficient conditions on a Banach space $Y$ assuring that a pair $(M,Y)$ has the Lip-BPB property when $(M,\mathbb{R})$ does, extending \cite[Proposition~4.4]{ChiMar-NA} in which the result is proved for $Y$ having Lindenstrauss' property $\beta$. With respect to the density of strongly norm attaining Lipschitz maps, let us comment that we do not know of any metric space $M$ such that $\SA(M,\R)$ is dense in $\Lip(M,\R)$ but there is $Y$ such that $\SA(M,Y)$ is not dense in $\Lip(M,Y)$. Nevertheless, we will also present sufficient conditions on a Banach space $Y$ assuring that the density of $\SA(M,\R)$ implies the density of $\SA(M,Y)$.

Our work is based on the recent paper \cite{ACK}. First of all, we need to give the necessary notions.
	
	\begin{defi}
		Let $A$ be a topological space and $(M,d)$ be a metric space. A function $f\colon A \longrightarrow M$ is said to be \emph{openly fragmented}, if for every nonempty open subset $U \subset A$ and every $\varepsilon>0$ there exists a nonempty open subset $V \subset U$ with $\diam(f(V))<\varepsilon$.
	\end{defi}

	It is clear that every continuous function $f\colon A\longrightarrow M$ is openly fragmented. In particular, if $A$ is a discrete topological space then every $f \colon A \longrightarrow M$ is openly fragmented.
	
		\begin{defi}
		Let $X$, $Y$ be Banach spaces and $\Gamma \subset Y^*$. An operator $T \in \operatorname{L}(X,Y)$ is said to be $\Gamma$\emph{-flat}, if $T^*|_\Gamma \colon (\Gamma, \omega^*)\longrightarrow (X^*, \| \cdot \|_{X^*})$ is openly fragmented. In other words, if for every $\omega^*$-open subset $U\subseteq Y^*$ with $U\cap\Gamma \neq \emptyset$ and every $\varepsilon>0$ there exists a $w^*$-open subset $V \subset U$ with $V\cap \Gamma \neq \emptyset$ such that $\diam(T^*(V\cap\Gamma))<\varepsilon$. The set of all $\Gamma$-flat operators in $\operatorname{L}(X,Y)$ will be denoted by $\operatorname{Fl_\Gamma}(X,Y)$.
	\end{defi}

	In \cite{ACK} it is shown that every Asplund operator $T \in \operatorname{L}(X,Y)$ is $\Gamma$-flat for every $\Gamma \subseteq B_{Y^*}$. Consequently, every compact operator is $\Gamma$-flat for every $\Gamma \subseteq B_{Y^*}$. In addition, it is shown that if $(\Gamma, \omega^*)$ is discrete then every bounded operator $T \in \operatorname{L}(X,Y)$ is $\Gamma$-flat.

	Finally, they introduce the notion of $ACK_\rho$ structure, which has the structural properties of $C(K)$ and its uniform subalgebras that are essential for the BPB property to hold. Let us recall that a subset $\Gamma$ of the unit ball of the dual of a Banach space $Y$ is \emph{$1$-norming} if the absolutely weak-star closed convex hull of $\Gamma$ equals the whole of $B_{Y^*}$ or, equivalently, if $\|y\|=\sup\{|f(y)|\colon f \in \Gamma\}$ for every $y \in Y$.

	\begin{defi}\label{ACK}
We say that a Banach space $Y$ has $ACK$ \emph{structure} with parameter $\rho$, for some $\rho \in [0,1)$ ($Y \in ACK_\rho$ for short) whenever there exists a $1$-norming set $\Gamma \subset B_{Y^*}$ such that for every $\varepsilon>0$ and every nonempty relatively $\omega^*$-open subset $U\subset \Gamma$ there exist a nonempty subset $V\subset U$, vectors $y_1^*\in V$, $e\in S_X$, and an operator $F \in \operatorname{L}(Y,Y)$ with the following properties:
		\begin{enumerate}
			\item $\|Fe\|=\|F\|=1$;
			\item $y_1^*(Fe)=1$;
			\item $F^*y_1^*=y_1^*$;
			\item denoting $V_1=\{y^* \in \Gamma \colon \|F^*y^*\|+(1-\varepsilon)\|(\operatorname{I}_{Y^*} - F^*)(y^*)\|\leq1\}$, then $|v^*(Fe)|\leq \rho$ for every $v^* \in \Gamma \setminus V_1$;
			\item $d(F^*y^*,\operatorname{aco}\{0,V\})<\varepsilon$ for every $y^* \in \Gamma$; and
			\item $|v^*(e)-1|\leq \varepsilon$ for every $v^* \in V$.
		\end{enumerate}
The Banach space $Y$ has \emph{simple $ACK$ structure} ($X \in ACK$) if $V_1=\Gamma$ (and so $\rho$ is redundant).
\end{defi}

The following statement is a compilation of results that can be found in \cite{ACK}. We introduce some notation. Given a Banach space $Y$, we write $c_0(Y,w)$ to denote the Banach space of all weakly null sequences in $Y$; if $K$ is a compact Hausdorff topological space, $C_w(K,Y)$ is the Banach space of all $Y$-valued weakly continuous functions from $K$ to $Y$.

	\begin{prop}[\cite{ACK}]\label{propACK}  The following statements hold.
		\begin{enumerate}
			\item $C(K)$ has simple $ACK$ structure for every compact Hausdorff topological space $K$.
			\item Finite injective tensor products of Banach spaces which have $ACK_\rho$ structure also have $ACK_\rho$ structure.
			\item Given a compact Hausdorff topological space $K$, if $Y \in ACK_\rho$  then $C(K,Y) \in ACK_\rho$.
			\item Let $Y$ be a Banach space having $ACK_\rho$ structure. Then $c_0(Y)$, $\ell_\infty(Y)$, and $c_0(Y,w)$ have $ACK_\rho$ structure.
			\item Given a compact Hausdorff topological space $K$, if $Y \in ACK_\rho$, then $C_w(K,Y)$ has $ACK_\rho$ structure.
		\end{enumerate}
	\end{prop}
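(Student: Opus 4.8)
The plan is to verify the list of conditions in Definition~\ref{ACK} directly in each case; all five statements are proved in \cite{ACK}, and I sketch the constructions I would use. \textbf{Part (1).} For $C(K)$ I would take as $1$-norming set the (signed) evaluation functionals $\Gamma=\{\lambda\delta_t\colon t\in K,\ |\lambda|=1\}$, which is $1$-norming by definition of the supremum norm. Given $\varepsilon>0$ and a relatively $\omega^*$-open $U\subset\Gamma$, I fix $\lambda_0\delta_{t_0}\in U$ and use Urysohn's lemma to produce a peak function $u\colon K\to[0,1]$ with $u(t_0)=1$ whose support is small enough that all $\lambda\delta_t$ with $\lambda$ near $\lambda_0$ and $u(t)$ near $1$ lie in $U$; this neighbourhood is the candidate for $V$. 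With $e=\lambda_0^{-1}\mathbf 1$, $y_1^*=\lambda_0\delta_{t_0}$, and $F$ the multiplication operator $Fg=u\cdot g$, conditions (1)--(3) and (6) are immediate, and (5) holds because $F^*(\lambda\delta_t)=u(t)\,\lambda\delta_t$ is close to $0$ when $u(t)$ is small and lies essentially in $V$ when $u(t)$ is near $1$. The structure is \emph{simple} because
\[ \|F^*(\lambda\delta_t)\|+(1-\varepsilon)\|(\operatorname{I}-F^*)(\lambda\delta_t)\|=u(t)+(1-\varepsilon)\bigl(1-u(t)\bigr)\leq 1, \]
so $V_1=\Gamma$ and (4) becomes vacuous.

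\textbf{Parts (2) and (3).} For $X\otimes_\varepsilon Y$ with $X,Y\in ACK_\rho$ I would use the product norming set $\{x^*\otimes y^*\colon x^*\in\Gamma_X,\ y^*\in\Gamma_Y\}$: given $U$, I project to $\omega^*$-open pieces of $\Gamma_X$ and $\Gamma_Y$, apply the $ACK_\rho$ structure of each factor to obtain data $(V_X,x_1^*,e_X,F_X)$ and $(V_Y,y_1^*,e_Y,F_Y)$, and assemble $e=e_X\otimes e_Y$, $y_1^*=x_1^*\otimes y_1^*$, $F=F_X\otimes F_Y$ and $V=V_X\otimes V_Y$. Conditions (1)--(3) and (6) multiply across the factors, while the preservation of $\rho$ in (4) rests on the factorization $|(x^*\otimes y^*)(Fe)|=|x^*(F_Xe_X)|\,|y^*(F_Ye_Y)|$. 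Statement (3) then follows from (1) and (2) through the isometric identification $C(K,Y)\cong C(K)\otimes_\varepsilon Y$, since $C(K)$ has simple $ACK$ structure and hence lies in $ACK_\rho$.

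\textbf{Parts (4) and (5).} For $c_0(Y)$, $\ell_\infty(Y)$ and $c_0(Y,w)$ I would take $\Gamma=\bigcup_n\{y^*\circ\pi_n\colon y^*\in\Gamma_Y\}$, where $\pi_n$ is the $n$-th coordinate projection; any relatively $\omega^*$-open subset of $\Gamma$ localizes to a single coordinate $n_0$, so one applies the $ACK_\rho$ structure of $Y$ there and lets $F$ act as the resulting operator on coordinate $n_0$ and trivially elsewhere, reducing all six conditions to the one-coordinate situation. For $C_w(K,Y)$ the norming set is formed from functionals $\delta_t\otimes y^*$, and one combines the Urysohn peaking argument of Part (1) with the $ACK_\rho$ data of $Y$.

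\textbf{Main obstacle.} The delicate conditions throughout are (4) and (5): the bound $|v^*(Fe)|\leq\rho$ on $\Gamma\setminus V_1$ and the approximation $d(F^*y^*,\operatorname{aco}\{0,V\})<\varepsilon$. These are precisely where the constructions interact nontrivially — in the tensor-product case one must track how the sets $V_1$ of the two factors combine, and in $C_w(K,Y)$ the weak continuity of the functions forces care in localizing $\Gamma$ while keeping $V$ inside the prescribed open set. As the complete verifications are lengthy, I would ultimately cite the corresponding results of \cite{ACK}.
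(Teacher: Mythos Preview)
The paper does not prove this proposition at all: it is presented as ``a compilation of results that can be found in \cite{ACK}'' and is simply cited. Your proposal, which sketches the constructions and then concludes by citing \cite{ACK}, is therefore entirely consistent with the paper's approach --- indeed, you go further than the paper does by outlining the proofs.

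One minor point: in your Part~(1) you take $\Gamma=\{\lambda\delta_t\colon t\in K,\ |\lambda|=1\}$, whereas the paper (see Remark~\ref{remark:Gamma-C(K)}) records that the $1$-norming set used in \cite{ACK} for $C(K)$ is simply $\Gamma=\{\delta_t\colon t\in K\}$, without the scalar multiples. Over the reals this is a harmless difference (your set is just $\pm\Gamma$), but it is worth aligning with the convention used in \cite{ACK} if you intend to cite it.
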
	

The main result of this section is the following.
	
	\begin{teo}\label{teoACK}
		Let $M$ be a pointed metric space such that $(M,\mathbb{R})$ has the Lip-BPB property, let $Y$ be a Banach space in $ACK_\rho$ with associated $1$-norming set $\Gamma \subseteq B_{Y^*}$ of Definition \ref{ACK}, and let $\varepsilon>0$. Then, there exists $\eta(\varepsilon,\rho)>0$ such that if we take $\widehat{T} \in \operatorname{L}(\mathcal{F}(M),Y)$ a $\Gamma$-flat operator with $\|T\|_L=1$ and $m \in \Mol(M)$ satisfying $\|\widehat{T}(m)\|>1-\eta(\varepsilon,\rho)$, then there exist an operator $\widehat{S} \in \operatorname{L}(\F(M),Y)$ and a molecule $u \in \Mol(M)$ such that
		\[ \|\widehat{S}(u)\|=\|S\|_L=1, \quad \|m-u\|<\varepsilon, \quad \|T-S\|_L<\varepsilon. \]
	\end{teo}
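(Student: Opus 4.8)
The plan is to adapt the Bishop--Phelps--Bollob\'as argument for $\Gamma$-flat operators into $ACK_\rho$ spaces from \cite{ACK}, taking the domain to be $X=\F(M)$ and replacing the automatic scalar Bishop--Phelps--Bollob\'as theorem by the assumed Lip-BPB property of $(M,\R)$. The one genuinely new feature is that the scalar step must return a \emph{molecule}, and this molecule will be the point at which the vector-valued map we construct attains its norm, which is exactly what is required. To begin, given a $\Gamma$-flat $\widehat T$ with $\|T\|_L=1$ and $m\in\Mol(M)$ with $\|\widehat T(m)\|>1-\eta$, I would use that $\Gamma$ is $1$-norming to pick $y_0^*\in\Gamma$ with $\re y_0^*(\widehat T(m))>1-\eta$, and consider the $\omega^*$-open set $W=\{y^*\in Y^*\colon \re y^*(\widehat T(m))>1-\eta\}$, which meets $\Gamma$. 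Since $\widehat T$ is $\Gamma$-flat, I can shrink $W$ to an $\omega^*$-open set $U$ with $U\cap\Gamma\neq\emptyset$ and $\diam\bigl(\widehat T^{*}(U\cap\Gamma)\bigr)<\delta$, where $\delta$ is a small parameter to be fixed at the end. Applying the $ACK_\rho$ structure of $Y$ to the relatively $\omega^*$-open set $U\cap\Gamma$ and to a small parameter $\varepsilon_1$ then yields $V\subset U\cap\Gamma$, a functional $y_1^*\in V$, a vector $e\in S_Y$, and an operator $F\in\operatorname{L}(Y,Y)$ satisfying conditions (1)--(6) of Definition \ref{ACK}.

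Next comes the scalar step. Put $g_1:=\widehat T^{*}(y_1^*)\in\F(M)^*\cong\Lip(M,\R)$. As $y_1^*\in V\subset W$, we have $\|g_1\|\leq 1$ and $g_1(m)=\re y_1^*(\widehat T(m))>1-\eta$, so the norm-one function $g_1/\|g_1\|$ almost attains its norm at the molecule $m$. Feeding it to the Lip-BPB property of $(M,\R)$ with target accuracy $\varepsilon_2\leq\varepsilon$ produces a norm-one $h\in\Lip(M,\R)$ and a molecule $u\in\Mol(M)$ with $h(u)=\|h\|_L=1$ and $\|m-u\|<\varepsilon_2$, and, using also $1-\|g_1\|<\eta$, with $\|h-g_1\|_L<\varepsilon_2+\eta$.

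I would then define the linearization of the target map as the rank-one perturbation
\[ \widehat S \;=\; \widehat T + (h-g_1)\otimes Fe, \]
where $Fe\in S_Y$ by condition (1). Since $g_1=\widehat T^{*}y_1^*$ and $y_1^*(Fe)=1$ (condition (2)), one gets $y_1^*\circ\widehat S=h$, whence $y_1^*(\widehat S(u))=h(u)=1$ and therefore $\|\widehat S(u)\|\geq 1$. Closeness is immediate, since $\|T-S\|_L=\|(h-g_1)\otimes Fe\|\leq\|h-g_1\|_L<\varepsilon_2+\eta$, which is below $\varepsilon$ for suitable choices; and $\|m-u\|<\varepsilon$ is already in hand. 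Thus the desired conclusion $\|\widehat S(u)\|=\|S\|_L=1$, attained at the molecule $u$, follows the moment I can establish the norm bound $\|\widehat S\|\leq 1$.

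That bound is the crux, and it is where the parameter $\rho$ (hence the dependence $\eta=\eta(\varepsilon,\rho)$) enters. I would estimate $|y^*(\widehat S(x))|$ for $y^*\in\Gamma$ and $x\in B_{\F(M)}$, splitting $\Gamma$ according to condition (4): for $y^*\in\Gamma\setminus V_1$ the interference of the correction is damped by $|y^*(Fe)|\leq\rho$, giving $|y^*(\widehat S(x))|\leq 1+\rho\|h-g_1\|_L$, while for $y^*\in V_1$ the inequality $\|F^*y^*\|+(1-\varepsilon_1)\|(\Id-F^*)y^*\|\leq 1$, combined with condition (5) placing $F^*y^*$ near $\operatorname{aco}\{0,V\}$ and with the small oscillation $\delta$ of $\widehat T^{*}$ on $U\cap\Gamma$ (so that on this slice the correction essentially \emph{replaces} the value of $\widehat T(x)$ in the $Fe$-direction rather than adding to it), controls $|y^*(\widehat S(x))|$. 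The delicate point, and the main obstacle, is that these two estimates a priori leave a slack of order $O(\delta+\varepsilon_1+\varepsilon_2+\eta)$ above $1$; the quantitative heart of the proof is to choose $\delta,\varepsilon_1,\varepsilon_2$ and $\eta(\varepsilon,\rho)$ small enough, as explicit functions of $\varepsilon$ and $\rho$, so that this slack collapses and yields $\|\widehat S\|=1$ exactly, with the norm attained at $u$. The appeal to the Lip-BPB property of $(M,\R)$ to ensure that $u$ is a molecule is the single place where the Lipschitz structure, as opposed to the purely linear theory of \cite{ACK}, is genuinely used.
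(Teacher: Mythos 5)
Your overall architecture matches the paper's: use $\Gamma$-flatness to localize $\widehat T^*$ on a $\omega^*$-slice of $\Gamma$, run the scalar Lip-BPB property (in the non-normalized form, losing an extra $\eta$) to produce a norm-one $h\in\Lip(M,\R)$ attaining its norm at a molecule $u$ close to $m$, and then invoke the $ACK_\rho$ data $(V,y_1^*,e,F,V_1)$ to build a vector-valued $\widehat S$ with $y_1^*\circ\widehat S=h$, so that $\|\widehat S(u)\|\geq y_1^*(\widehat S(u))=h(u)=1$. Up to that point your argument is essentially the paper's (the paper packages the flatness-plus-scalar step as its Lemma~\ref{Lema2}, a Lipschitz version of Lemma~2.9 of \cite{ACK}).

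The gap is in the definition of $\widehat S$. You take the additive rank-one perturbation $\widehat S=\widehat T+(h-g_1)\otimes Fe$ and hope that choosing the parameters small will make the resulting bound $\|\widehat S\|\leq 1+O(\delta+\varepsilon_1+\varepsilon_2+\eta)$ ``collapse'' to $\|\widehat S\|\leq 1$. It cannot: smallness of the excess is not the same as its vanishing, and norm attainment at $u$ requires the exact inequality $\|\widehat S\|\leq 1=\|\widehat S(u)\|$. Concretely, if $y^*\in\Gamma\setminus V_1$ satisfies $\|\widehat T^*y^*\|=1$, then $\widehat S^*y^*=\widehat T^*y^*+y^*(Fe)(h-g_1)$ can have norm strictly larger than $1$ for every admissible choice of the parameters (nothing in the $ACK_\rho$ axioms prevents the perturbation from pointing in the norm-increasing direction), and renormalizing $\widehat S$ afterwards destroys the attainment at $u$. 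This is precisely why the paper (following \cite{ACK}) does \emph{not} perturb $\widehat T$ additively but instead sets
\[
\widehat S(x)=\widehat f_r(x)\,Fe+(1-\delta)\,(\Id_Y-F)\widehat T(x),
\]
i.e.\ it \emph{replaces} the $F$-component of $\widehat T$ by the rank-one operator $\widehat f_r\otimes Fe$ and damps the complementary component by a factor $1-\delta$. On $V_1$ the defining inequality $\|F^*y^*\|+(1-\varepsilon)\|(\Id_{Y^*}-F^*)y^*\|\leq 1$ then gives $\|\widehat S^*y^*\|\leq 1$ outright, while on $\Gamma\setminus V_1$ one gets $\|\widehat S^*y^*\|\leq \delta\rho+(1-\delta)(1+3\varepsilon_2+r+\varepsilon_1+\eta(\varepsilon_1))$, and the explicit choice $\delta=\frac{3\varepsilon_2+r+\varepsilon_1+\eta(\varepsilon_1)}{1-\rho+3\varepsilon_2+r+\varepsilon_1+\eta(\varepsilon_1)}$ makes this exactly $\leq 1$; the price of the damping factor is paid only in the estimate of $\|\widehat S-\widehat T\|$, where it contributes the $2\delta$ term controlled through $1-\rho$. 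Without this replacement-plus-damping structure the norm bound, which you correctly identify as the crux, is not attainable, so the proof as proposed does not close.
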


Prior to give the proof of the theorem, we present the main consequences of Theorem \ref{teoACK}.

\begin{cor}\label{corollary:all_consequences_ACKrho}
	Let $M$ be a pointed metric space such that $(M,\mathbb{R})$ has the Lip-BPB property. The following statements hold.
	\begin{enumerate}
		\item For every compact Hausdorff topological space $K$, the pair $(M,C(K))$ has the Lip-BPB property for $\Gamma$-flat operators, where $\Gamma$ is the $1$-norming set given by Definition \ref{ACK} for $C(K)$.
		\item Let $Z$ be a finite injective tensor product of Banach spaces which have $ACK_\rho$ structure. Then, $(M,Z)$ has the Lip-BPB property for $\Gamma$-flat operators, where $\Gamma$ is the $1$-norming set given by Definition \ref{ACK} for $Z$.
		\item Let $K$ be a compact Hausdorff topological space. If $Y \in ACK_\rho$, then $(M,C(K,Y))$ and $(M,C_w(K,Y))$ have the Lip-BPB property for $\Gamma$-flat operators, where $\Gamma$ is the $1$-norming set given by Definition \ref{ACK} for $C(K,Y)$ and $(M,C_w(K,Y))$, respectively.
		\item Let $Y \in ACK_\rho$. Then, $(M,c_0(Y))$, $(M,\ell_\infty(Y))$, and $(M,c_0(Y,w))$ have the Lip-BPB property for $\Gamma$-flat operators, where $\Gamma$ is the corresponding $1$-norming set given by Definition \ref{ACK} for each case.
	\end{enumerate}
\end{cor}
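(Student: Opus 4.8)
The plan is to observe that the Corollary is a purely formal consequence of Theorem~\ref{teoACK} together with Proposition~\ref{propACK}: in each of the four cases the relevant target space is shown in \cite{ACK} to carry $ACK_\rho$ structure, and once this is known Theorem~\ref{teoACK} delivers the Lip-BPB property for $\Gamma$-flat operators with exactly the $1$-norming set $\Gamma$ coming from Definition~\ref{ACK}. So the entire argument consists of matching each target space to the appropriate item of Proposition~\ref{propACK} and then invoking the theorem; no new estimates are required.

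Concretely, I would argue case by case. For (1), $C(K)$ has simple $ACK$ structure by Proposition~\ref{propACK}(1); since simple $ACK$ structure is a (stronger) special case of $ACK_\rho$ structure (the set $V_1$ equals the whole of $\Gamma$, so condition~(4) of Definition~\ref{ACK} is vacuous and $\rho$ may be chosen arbitrarily in $[0,1)$), Theorem~\ref{teoACK} applies and yields the Lip-BPB property for $\Gamma$-flat operators, with $\Gamma$ the norming set furnished by Definition~\ref{ACK} for $C(K)$. For (2), a finite injective tensor product $Z$ of $ACK_\rho$ spaces is again $ACK_\rho$ by Proposition~\ref{propACK}(2), so Theorem~\ref{teoACK} applies to $(M,Z)$. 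For (3), Proposition~\ref{propACK}(3) gives $C(K,Y)\in ACK_\rho$ and Proposition~\ref{propACK}(5) gives $C_w(K,Y)\in ACK_\rho$ whenever $Y\in ACK_\rho$; applying Theorem~\ref{teoACK} to each target yields the two assertions. Finally, for (4), Proposition~\ref{propACK}(4) guarantees that $c_0(Y)$, $\ell_\infty(Y)$ and $c_0(Y,w)$ all inherit $ACK_\rho$ structure from $Y$, so a third application of Theorem~\ref{teoACK} finishes this case.

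The one point that requires a little care, rather than being an obstacle in the usual sense, is the bookkeeping of the norming set: Theorem~\ref{teoACK} produces the Lip-BPB property for operators that are $\Gamma$-flat with respect to the \emph{particular} $\Gamma\subseteq B_{Y^*}$ witnessing the $ACK_\rho$ structure, so in each statement I must take $\Gamma$ to be precisely the $1$-norming set that Proposition~\ref{propACK} (via Definition~\ref{ACK}) attaches to that specific space --- for instance the $\Gamma$ for $C(K,Y)$ and the (different) $\Gamma$ for $C_w(K,Y)$ in part~(3). This is exactly how the Corollary is phrased, so no additional work is needed beyond quoting the correct $\Gamma$ in each line. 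Since every target space has been identified as an $ACK_\rho$ space and $(M,\mathbb{R})$ has the Lip-BPB property by hypothesis, all four statements follow at once from Theorem~\ref{teoACK}.
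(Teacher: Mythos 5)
Your proposal is correct and matches the paper exactly: the paper proves this corollary in a single sentence, invoking Theorem~\ref{teoACK} together with Proposition~\ref{propACK}, which is precisely your case-by-case matching argument. Your extra remarks (that simple $ACK$ structure is a special case of $ACK_\rho$, and that $\Gamma$ must be the specific $1$-norming set attached to each space) are accurate elaborations of the same route, so nothing further is needed.
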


The proof follows immediately from Theorem \ref{teoACK} and Proposition \ref{propACK}.

\begin{remark}\label{remark:Gamma-C(K)}
Let us give some comments on the assertion (1) of Corollary~\ref{corollary:all_consequences_ACKrho}. First, the set $\Gamma$ of Definition \ref{ACK} for the case $Y=C(K)$ is just $\Gamma=\{\delta_t\colon t\in K\}\subset S_{C(K)^*}$ (this follows from the results in the paper \cite{ACK}), so given $T\in \operatorname{L}(X,C(K))$, $T^*|_\Gamma$ is just the usual representation function of the operator $T$, that is, $\mu_T\colon K\longrightarrow X^*$ given by $\mu_T(t)=T^*(\delta_t)$ for all $t\in K$. This procedure actually gives an identification between $\operatorname{L}(X,C(K))$ and the space of those weak-star continuous functions $\mu\colon K\longrightarrow X^*$. Norm continuous functions correspond to compact operators (which are $\Gamma$-flat). We do not know which functions are openly fragmented or, equivalently, which functions correspond to $\Gamma$-flat operators, but there is an intermediate condition which has been studied widely in the literature: quasi-continuous functions. A function $\mu\colon K\longrightarrow X^*$ is \emph{quasi-continuous} if for every non-empty open subset $U\subset K$, every $s\in U$, and every neighborhood $V$ of $\mu(s)$, there exists a non-empty open subset $W\subset U$ such that $\mu(W)\subset V$. This is a classical notion which is still investigated, see the paper \cite{Banakh} and references therein for a sample. Quasi-continuous functions are openly fragmented and they form a class more general than the one of continuous functions.
\end{remark}

Let us comment that there is one more consequence of Theorem~\ref{teoACK} that was already stated in \cite[Proposition~4.4]{ChiMar-NA} with a different proof. Indeed, if a Banach space $Y$ has Lindenstrauss' property $\beta$ (see \cite[Definition 4.3]{ChiMar-NA} for instance), then $Y\in ACK_\rho$ for a discrete $1$-norming set $\Gamma$, so every operator arriving to $Y$ is $\Gamma$-flat. Therefore, it follows from Theorem \ref{teoACK} another proof of the following fact given in \cite[Proposition~4.4]{ChiMar-NA}: if $(M,\R)$ has the Lip-BPB property and $Y$ has property $\beta$, then $(M,Y)$ has the Lip-BPB property.

Let us now prepare the way for the proof of Theorem \ref{teoACK} by presenting some preliminary results.
	
	\begin{lem}\label{Lema1}
		Let $M$ be a pointed metric space and let $\varepsilon>0$. Suppose that $(M, \mathbb{R})$ has the Lip-BPB property witnessed by the function $\varepsilon\longmapsto \eta(\varepsilon)>0$. Then, given $f \in \Lip(M,\mathbb{R})$ with $\|f\|_L\leq 1$ and $m \in \Mol(M)$ such that $|\widehat{f}(m)|> 1-\eta(\varepsilon)$, there exist $g \in \Lip(M,\mathbb{R})$ with $\|g\|_L=1$ and $u \in \Mol(M)$ satisfying
		\[ |\widehat{g}(u)|=1, \quad \|f-g\|_L< \varepsilon+\eta(\varepsilon), \quad \|m-u\|< \varepsilon. \]
	\end{lem}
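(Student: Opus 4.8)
The whole point of the lemma is to relax the norm-one hypothesis built into the definition of the Lip-BPB property to the weaker assumption $\|f\|_L\le 1$, paying for this only with the extra additive term $\eta(\varepsilon)$ in the norm estimate. The plan is therefore to \emph{normalise} $f$, apply the scalar Lip-BPB property to the normalisation, and then control the distance between $f$ and its normalisation using the fact that $\|f\|_L$ is forced to be close to $1$.

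First I would note that we may assume $\eta(\varepsilon)<1$ (otherwise shrink it), so that the hypothesis $|\widehat f(m)|>1-\eta(\varepsilon)>0$ guarantees $f\neq 0$ and we can set $\tilde f=f/\|f\|_L\in\Lip(M,\mathbb{R})$ with $\|\tilde f\|_L=1$. Since $\|f\|_L\le 1$, dividing by $\|f\|_L$ can only increase the value at $m$, so
\[ |\widehat{\tilde f}(m)|=\frac{|\widehat f(m)|}{\|f\|_L}\ge |\widehat f(m)|>1-\eta(\varepsilon). \]
Thus $\tilde f$ is a norm-one scalar function that almost attains its norm at $m$ with exactly the right slack, and the Lip-BPB property of $(M,\mathbb{R})$ applies: it produces $g\in\Lip(M,\mathbb{R})$ with $\|g\|_L=1$ and $u\in\Mol(M)$ such that $|\widehat g(u)|=1$, $\|g-\tilde f\|_L<\varepsilon$, and $\|m-u\|<\varepsilon$. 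The first, second and last of these are already three of the four required conclusions.

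It remains to replace $\tilde f$ by $f$ in the norm estimate, and this is where the only quantitative content of the argument lies. A direct computation gives $\tilde f-f=\bigl(\tfrac{1}{\|f\|_L}-1\bigr)f$, whence $\|\tilde f-f\|_L=1-\|f\|_L$ (using $\|f\|_L\le 1$). The almost-attaining hypothesis forces $\|f\|_L\ge|\widehat f(m)|>1-\eta(\varepsilon)$, so $\|\tilde f-f\|_L<\eta(\varepsilon)$. Combining this with the triangle inequality yields
\[ \|f-g\|_L\le \|f-\tilde f\|_L+\|\tilde f-g\|_L<\eta(\varepsilon)+\varepsilon, \]
which is exactly the claimed bound. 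I do not expect any genuine obstacle: the lemma is essentially a bookkeeping device whose only substance is tracking the discrepancy $1-\|f\|_L$, and the normalisation is legitimate precisely because that discrepancy is controlled by $\eta(\varepsilon)$. The one point requiring a moment's care is ensuring $f\neq 0$ before dividing, which is why I would fix $\eta(\varepsilon)<1$ at the outset.
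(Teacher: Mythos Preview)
Your proof is correct and follows essentially the same approach as the paper's: normalise $f$, apply the scalar Lip-BPB property to $f/\|f\|_L$, and then use $\|f-f/\|f\|_L\|_L=1-\|f\|_L<\eta(\varepsilon)$ together with the triangle inequality to get the required estimate. Your write-up is in fact slightly more careful than the paper's, since you explicitly justify $f\neq 0$ and the inequality $|\widehat{\tilde f}(m)|>1-\eta(\varepsilon)$ before invoking the property.
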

	
	\begin{proof}
		If $\|f\|_L=1$ then it is enough to apply the Lip-BPB property. If $\|f\|_L<1$, by applying the Lip-BPB property, we know that there exist $g \in S_{\Lip(M,\mathbb{R})}$ and $u \in \Mol(M)$ satisfying
		\[ \left\|g-\frac{f}{\|f\|}\right\|_L <\varepsilon, \quad \|u-m\|< \varepsilon. \]
		Then, note that
		\[ \|g-f\|_L\leq \left\|g-\frac{f}{\|f\|}\right\|_L+\left\|\frac{f}{\|f\|}-f\right\|_L<\varepsilon+|1-\|f\|_L|\leq \varepsilon+\eta(\varepsilon).\qedhere\]
	\end{proof}

	\begin{lem}\label{Lema2}
		Let $M$ be a pointed metric space such that $(M,\mathbb{R})$ has the Lip-BPB property, let $Y$ be a Banach space, and let $\Gamma \subseteq B_{Y^*}$ be a $1$-norming set. Fix $\varepsilon>0$ and consider $\eta(\varepsilon)$ the constant given by the Lip-BPB property of $(M,\mathbb{R})$. Let $\widehat{T} \in \operatorname{Fl}_\Gamma(\F(M),Y)$ be a $\Gamma$-flat operator with $\|T\|_L=1$ and $m \in \Mol(M)$ such that\[\|\widehat{T}(m)\|>1-\eta(\varepsilon).\]
		Then, for every $r>0$ there exist:
		\begin{enumerate}
			\item a $\omega^*$-open subset $U_r\subset V$ with $U_r \cap \Gamma \neq \emptyset$,
			\item $\widehat{f}_r \in S_{\mathcal{F}(M)^*}$ and $u_r \in \Mol(M)$ satisfying
			\[\widehat{f}_r(u_r)=1, \quad \|m-u_r\|\leq \varepsilon, \quad \|\widehat{T}^*z^* - \widehat{f}_r\|\leq r+\varepsilon+\eta(\varepsilon) \quad \forall \, z^* \in U_r \cap \Gamma. \]
		\end{enumerate}
	\end{lem}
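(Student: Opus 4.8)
The plan is to reduce the near-norm-attainment of $\widehat{T}$ at $m$ to a scalar statement handled by Lemma~\ref{Lema1}, and then to exploit $\Gamma$-flatness so that a single scalar perturbation remains valid simultaneously on a small piece of $\Gamma$. Since $\Gamma$ is $1$-norming and $\|\widehat{T}(m)\|>1-\eta(\varepsilon)$, I will first pick $z_0^*\in\Gamma$ with $|\langle z_0^*,\widehat{T}(m)\rangle|>1-\eta(\varepsilon)$ and, normalising the orientation so that $\langle z_0^*,\widehat{T}(m)\rangle>1-\eta(\varepsilon)$, set
\[ V:=\{y^*\in Y^*\colon\langle y^*,\widehat{T}(m)\rangle>1-\eta(\varepsilon)\}. \]
This $V$ is $\omega^*$-open, because $y^*\mapsto\langle y^*,\widehat{T}(m)\rangle$ is $\omega^*$-continuous, and $z_0^*\in V\cap\Gamma$. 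Feeding the $\omega^*$-open set $V$ and the number $r>0$ into the open fragmentability of $\widehat{T}^*|_\Gamma$ (which is precisely $\Gamma$-flatness), I obtain a $\omega^*$-open subset $U_r\subset V$ with $U_r\cap\Gamma\neq\emptyset$ and $\diam\bigl(\widehat{T}^*(U_r\cap\Gamma)\bigr)<r$; this is item~(1), and since $U_r\subset V$ every $z^*\in U_r\cap\Gamma$ satisfies $\langle\widehat{T}^*z^*,m\rangle>1-\eta(\varepsilon)$.

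Next I will fix a representative $z_1^*\in U_r\cap\Gamma$ and set $f:=\widehat{T}^*z_1^*$, regarded as an element of $\Lip(M,\R)\cong\F(M)^*$. Then $\|f\|\leq\|z_1^*\|\,\|\widehat{T}\|\leq1$ and $f(m)=\langle\widehat{T}^*z_1^*,m\rangle>1-\eta(\varepsilon)$, so in particular $|\widehat{f}(m)|>1-\eta(\varepsilon)$, and $f,m$ meet the hypotheses of Lemma~\ref{Lema1}. That lemma produces $\widehat{f}_r\in S_{\F(M)^*}$ (associated with some $g\in\Lip(M,\R)$, $\|g\|_L=1$) and $u_r\in\Mol(M)$ with $|\widehat{f}_r(u_r)|=1$, $\|f-\widehat{f}_r\|<\varepsilon+\eta(\varepsilon)$, and $\|m-u_r\|<\varepsilon$, whence $\|m-u_r\|\leq\varepsilon$. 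Finally, since $\|\widehat{f}_r\|=1$ and $\|m\|=1$, the estimate
\[ \widehat{f}_r(u_r)\geq f(m)-\|f-\widehat{f}_r\|-\|m-u_r\|>\bigl(1-\eta(\varepsilon)\bigr)-\bigl(\varepsilon+\eta(\varepsilon)\bigr)-\varepsilon \]
is positive once $\varepsilon$ is small enough that $1-2\eta(\varepsilon)-2\varepsilon>0$, which together with $|\widehat{f}_r(u_r)|=1$ forces $\widehat{f}_r(u_r)=1$, completing item~(2) except for the norm bound.

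The remaining norm bound follows by routing every functional through the representative: for each $z^*\in U_r\cap\Gamma$,
\[ \|\widehat{T}^*z^*-\widehat{f}_r\|\leq\|\widehat{T}^*z^*-\widehat{T}^*z_1^*\|+\|\widehat{T}^*z_1^*-\widehat{f}_r\|<r+(\varepsilon+\eta(\varepsilon)), \]
where the first summand is at most the diameter $\diam\bigl(\widehat{T}^*(U_r\cap\Gamma)\bigr)<r$ furnished by $\Gamma$-flatness and the second is the Lemma~\ref{Lema1} bound $\|f-\widehat{f}_r\|$. I expect the genuinely delicate point to be this uniform transfer together with the sign bookkeeping: $\Gamma$-flatness only controls the oscillation of $\widehat{T}^*$ across $U_r\cap\Gamma$, so the whole argument rests on perturbing the single functional $f=\widehat{T}^*z_1^*$ and then certifying, via the diameter estimate, that the same $\widehat{f}_r$ stays within $r+\varepsilon+\eta(\varepsilon)$ of every $\widehat{T}^*z^*$ on the piece. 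For this closeness to be compatible with $\widehat{f}_r(u_r)=+1$ and $u_r$ near $m$ one needs $\langle\widehat{T}^*z^*,m\rangle$ close to $+1$ rather than $-1$ throughout $U_r\cap\Gamma$, which is exactly why the orientation of the norming functional must be pinned down positively when $V$ is defined; making that normalisation legitimate is the step I would scrutinise most carefully.
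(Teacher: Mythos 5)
Your argument is, in substance, the paper's own: the authors prove this lemma simply by citing the proof of Lemma 2.9 of the $\Gamma$-flatness paper \cite{ACK} with Lemma~\ref{Lema1} substituted for the scalar Bishop--Phelps--Bollob\'as theorem, and that proof is exactly your scheme (norming functional $\to$ $w^*$-open set $V$ meeting $\Gamma$ $\to$ fragmentation to $U_r$ with $\diam(\widehat{T}^*(U_r\cap\Gamma))<r$ $\to$ scalar perturbation of a single representative $\widehat{T}^*z_1^*$ via Lemma~\ref{Lema1} $\to$ triangle inequality through the diameter bound). Your definition of $V$ also sensibly supplies the set that the statement of the lemma refers to but never introduces. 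The fragmentation step, the verification of the hypotheses of Lemma~\ref{Lema1} for $f=\widehat{T}^*z_1^*$, and the final estimate $\|\widehat{T}^*z^*-\widehat{f}_r\|\leq r+\varepsilon+\eta(\varepsilon)$ are all correct as written.

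The step you flag and leave open is, however, a genuine gap: since $\Gamma$ is only assumed $1$-norming in the sense $\|y\|=\sup\{|y^*(y)|\colon y^*\in\Gamma\}$, it need not be symmetric, so you cannot ``normalise the orientation'' of $z_0^*$; if every $z^*\in\Gamma$ with $|z^*(\widehat{T}(m))|>1-\eta(\varepsilon)$ takes a negative value there, your $V$ misses $\Gamma$ entirely. The repair is to set $V=\{y^*\colon |\langle y^*,\widehat{T}(m)\rangle|>1-\eta(\varepsilon)\}$, which does meet $\Gamma$, and to note that once $\diam\bigl(\widehat{T}^*(U_r\cap\Gamma)\bigr)<r<2(1-\eta(\varepsilon))$ the sign of $\langle \widehat{T}^*z^*,m\rangle$ is constant on $U_r\cap\Gamma$. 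In the positive case your argument closes the proof. In the negative case no choice of $\widehat{f}_r$ can give $\widehat{f}_r(u_r)=+1$: any functional within $r+\varepsilon+\eta(\varepsilon)$ of $\widehat{T}^*z_1^*$ takes a value near $-1$ at $m$, hence at every molecule within $\varepsilon$ of $m$, and your own two-sided estimate then forces $\widehat{f}_r(u_r)=-1$. One therefore either settles for the conclusion $|\widehat{f}_r(u_r)|=1$ --- which is all that the proof of Theorem~\ref{teoACK} actually uses, in the line $1=|\widehat{f}_r(u)|=|y_1^*(\widehat{f}_r(u)Fe)|$ --- or one replaces $m$ by the molecule $-m=m_{q,p}$ and $u_r$ by $-u_r$, exploiting that $\Mol(M)$ is symmetric and that norm attainment is insensitive to this sign change. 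Either repair is harmless downstream, but your proof (and, in fairness, the statement of the lemma itself, which asserts $\widehat{f}_r(u_r)=1$) needs one of them to be complete; note also that your positivity argument implicitly assumes $\varepsilon+\eta(\varepsilon)<1$, a harmless but unstated reduction.
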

	
	\begin{proof}
		We just have to repeat the proof of Lemma 2.9 in \cite{ACK} using Lemma \ref{Lema1} instead of Proposition 2.11 in \cite{ACK}.
	\end{proof}

Now, we are able to prove the main result of this section.
	
	\begin{proof}[Proof of Theorem \ref{teoACK}]
		Given $\varepsilon>0$, let $\widehat{\eta}(\varepsilon)>0$ be the constant associated to the Lip-BPB property of $(M,\mathbb{R})$. Fix $0<\varepsilon_0<\varepsilon$ and take $\varepsilon_1>0$ such that
		\[
\max\left\{\varepsilon_1, 2\left( (\varepsilon_1+\eta(\varepsilon_1))+ \frac{2(\varepsilon_1+\eta(\varepsilon_1))}{1-\rho+(\varepsilon_1+\eta(\varepsilon_1))}\right)\right\}\leq \varepsilon_0.
\]
		Take $r>0$ and $0<\varepsilon_2<\frac{2}{3}$. Consider $\widehat{T} \in \operatorname{L}(\mathcal{F}(M),Y)$ a $\Gamma$-flat operator with $\|T\|_L=1$ and a molecule $m \in \Mol(M)$ such that $\|\widehat{T}(m)\|>1-\widehat{\eta}(\varepsilon)$. Then, applying Lemma \ref{Lema2} with $Y$, $\Gamma$, $r$ and $\varepsilon_1$, we obtain an $\omega^*$-open subset $U_r\subseteq Y^*$ with $U_r\cap\Gamma \neq \emptyset$, and  $\widehat{f}_r \in S_{\mathcal{F}(M)^*}$, $u_r \in \Mol(M)$ satisfying
		\[ \widehat{f}_r(u_r)=1, \quad \|m-u\|\leq \varepsilon_1, \quad \|\widehat{T}^*z^*-\widehat{f}_r\|\leq r+\varepsilon_1+\eta(\varepsilon_1)\quad \forall \, z^* \in U_r\cap\Gamma.\]
		On the other hand, since $U_r\cap\Gamma \neq \emptyset$, by applying the definition of $ACK_\rho$ structure to $U=U_r\cap\Gamma$ and $\varepsilon_2$, we obtain a nonempty subset $V\subseteq U$, points $y_1^*\in V$ and $e \in S_{Y}$, an operator $F\in\operatorname{L}(Y,Y)$, and a subset $V_1\subseteq \Gamma$ satisfying the properties of Definition \ref{ACK}.

		Let us define the linear operator $\widehat{S}\colon \F(M) \longrightarrow Y$ by
		\[ \widehat{S}(x)=\widehat{f}_r(x)Fe+(1-\delta)(\operatorname{Id}_Y-F)\widehat{T}(x),\]
		where $\delta \in [\varepsilon_2,1)$. We will choose $\delta$ so that $\|\widehat{S}\|\leq 1$. In order to estimate $\|\widehat{S}\|$, recall that since $\Gamma$ is a $1$-norming set, we have that
		\[ \|\widehat{S}\|=\|\widehat{S}^*\|=\sup\left \{\bigl\|\widehat{S}^*y^*\bigr\|\colon y^* \in \Gamma \right \}.\]
		Therefore, we take $y^* \in \Gamma$ and estimate
		\[ \|\widehat{S}^* y^*\|= \bigl\|y^*(Fe)\widehat{f}_r + (1-\delta)\widehat{T}^*(\operatorname{Id}_{Y^*}-F^*)(y^*)\bigr\|. \]
		If $y^* \in V_1$, then that $\|\widehat{S}^*y^*\|\leq 1$ follows from the property (4) of Definition \ref{ACK}. Therefore, we only have to consider the case when $y^* \in \Gamma \setminus V_1$.
		As before, by Definition \ref{ACK}, for every $y^* \in \Gamma$ there exists a point $v^*=\sum_{k=1}^n \lambda_k v_k^*$ satisfying
		\[ \{v_1^*,\ldots,v_n^*\}\subseteq V, \quad \sum_{k=1}^n |\lambda_k|\leq 1, \quad \|F^*y^*-v^*\|<\varepsilon_2.\]
		Consequently,
		\begin{align*}
		\|v^*(e)\widehat{f}_r-\widehat{T}^*v^*\|&\leq\sum_{k=1}^n |\lambda_k|\|v_k^*(e)\widehat{f}_r -\widehat{T}^*v_k^*\|\\&\leq \sum_{k=1}^n |\lambda_k|(\|v_k^*(e)\widehat{f}_r-\widehat{f}_r\|+\|\widehat{f}_r-\widehat{T}^*v_k^*\|)\\&\leq \varepsilon_2 + \sum_{k=1}^n |\lambda_k| \|\widehat{f}_r-\widehat{T}^*v_k^*\| \leq \varepsilon_2+r+\varepsilon_1+\eta(\varepsilon_1).
		\end{align*}
		Now, for every $y^*\in \Gamma \setminus V_1$ we have that
		\begin{align*}
		\|\widehat{S}^*y^*\|&\leq \delta|y^*(Fe)|+(1-\delta)\|y^*(Fe)\widehat{f}_r+\widehat{T}^*y^*-\widehat{T}^*F^*y^*\|\\
		&\leq \delta\rho + (1-\delta)\|\widehat{T}^*y^*\|+(1-\delta)\|(F^*y^*)(e)\widehat{f}_r-\widehat{T}^*F^*y^*\|\\
		&\leq \delta\rho+(1-\delta)+2\varepsilon_2(1-\delta)+(1-\delta)\|v^*(e)\widehat{f}_r-R^*v^*\|\\
		&\leq \delta\rho + (1-\delta) + 2\varepsilon_2(1-\delta) + (1-\delta)(\varepsilon_2+r+\varepsilon_1+\eta(\varepsilon_1))\\
		&\leq \delta\rho+(1-\delta)(1+3\varepsilon_2+r+\varepsilon_1+\eta(\varepsilon_1)).
		\end{align*}
		Therefore, if we choose
		\[ \delta=\frac{3\varepsilon_2+r+\varepsilon_1+\eta(\varepsilon_1)}{1-\rho+3\varepsilon_2+r+\varepsilon_1+\eta(\varepsilon_1)} \in \left [2/3,1\right ) \subseteq [\varepsilon_2,1),\]
		then we will have that $\|\widehat{S}\|\leq 1$. In this case,
		\[1=|\widehat{f}_r(u)|=|y_1^*(\widehat{f}_r(u)Fe)|=|y_1^*(\widehat{S}(u))|\leq\|\widehat{S}(u)\|\leq 1,\]
		from which we deduce that $\|\widehat{S}\|=1$ and $\widehat{S}$ attains its norm at the molecule $u$, which we already knew satisfies $\|m-u\|\leq \varepsilon_1\leq \varepsilon_0<\varepsilon$.

		Finally, let us estimate $\|\widehat{S}-\widehat{T}\|$. First,
		\begin{align*}\|\widehat{S}-\widehat{T}\|&=\|\widehat{S}^*-\widehat{T}^*\|= \sup\bigl\{\bigl|\widehat{S}^*y^*-\widehat{T}^*y^*\bigr|\colon y^*\in \Gamma\bigr\}\\
		&\leq 2\delta+\sup\left \{\|y^*(Fe)\widehat{f}_r-\widehat{T}^*F^*y^*\|\colon y^*\in \Gamma\right \}.
		\end{align*}
		Second,
		\begin{align*}
		\|(F^*y^*)(e)\widehat{f}_r-\widehat{T}^*F^*y^*\|\leq 2\varepsilon_2+\|v^*(e)\widehat{f}_r-\widehat{T}^*v^*\|
		& \leq 3\varepsilon_2+r+\varepsilon_1+\eta(\varepsilon_1).
		\end{align*}
		Therefore, we obtain that
		\[\|\widehat{S}-\widehat{T}\|\leq 2\delta+3\varepsilon_2+r+\varepsilon_1+\eta(\varepsilon_1). \]
		Since $\varepsilon_2$ and $r$ were arbitrary, by taking these constants verifying $3\varepsilon_2+r\leq \varepsilon_1+\eta(\varepsilon_1)$, we will have that
		\[ \|\widehat{S}-\widehat{T}\|\leq 2(\varepsilon_1+\eta(\varepsilon_1)+\delta)\leq 2\left((\varepsilon_1+\eta(\varepsilon_1))+ \frac{2(\varepsilon_1+\eta(\varepsilon_1))}{1-\rho+\varepsilon_1+\eta(\varepsilon_1)}\right)\leq \varepsilon_0<\varepsilon. \qedhere\]
	\end{proof}

	Given a pointed metric space $M$ and a Banach space $Y$, it is possible to give a result analogous to Theorem \ref{teoACK} but for the density of $\SA(M,Y)$. We just have to repeat the previous proof using that $\SA(M,\mathbb{R})$ is dense in $\Lip(M,\mathbb{R})$ instead of the Lip-BPB property of $(M,\mathbb{R})$, forgetting the estimation on the distance between molecules.
	
	\begin{teo}\label{ACKden}
		Let $M$ be a pointed metric space such that $\SA(M,\mathbb{R})$ is dense in $\Lip(M,\mathbb{R})$, let $Y$ be a Banach space in $ACK_\rho$, and let $\Gamma \subseteq B_{Y^*}$ be the $1$-norming set given by Definition \ref{ACK}. Then, we have that
		\[\operatorname{Fl}_\Gamma(\mathcal{F}(M),Y)\subseteq\overline{\SA(M,Y)}.\]
	\end{teo}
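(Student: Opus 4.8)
The plan is to mirror the proof of Theorem \ref{teoACK} almost verbatim, replacing every invocation of the Lip-BPB property of $(M,\mathbb{R})$ by the mere density of $\SA(M,\mathbb{R})$ in $\Lip(M,\mathbb{R})$, and simply discarding each estimate that controlled the distance $\|m-u\|$ between molecules, since in the present statement there is no prescribed molecule $m$ to remain close to. Concretely, I fix $\varepsilon>0$ and a $\Gamma$-flat operator $\widehat{T}\in\operatorname{Fl}_\Gamma(\mathcal{F}(M),Y)$; after normalizing I may assume $\|T\|_L=1$, and the goal is to produce $\widehat{S}\in\SA(M,Y)$ with $\|T-S\|_L<\varepsilon$.

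First I would record the density analogue of Lemma \ref{Lema1}: given $f\in\Lip(M,\mathbb{R})$ with $\|f\|_L\leq 1$ and $\varepsilon>0$, there exist $g\in\Lip(M,\mathbb{R})$ with $\|g\|_L=1$ and $u\in\Mol(M)$ with $|\widehat{g}(u)|=1$ and $\|f-g\|_L<\varepsilon$. This is immediate: if $\|f\|_L<1$ one applies density to $f/\|f\|_L$ and estimates exactly as in Lemma \ref{Lema1}, absorbing the term $|1-\|f\|_L|$ into the approximation error; if $\|f\|_L=1$ one applies density directly. Next I would state the density analogue of Lemma \ref{Lema2}, obtained by running the proof of Lemma~2.9 of \cite{ACK} (which uses only the $\Gamma$-flatness of $\widehat{T}$ together with a scalar approximation step) with the preceding density lemma in place of Lemma \ref{Lema1}: for every $r>0$ there are a $\omega^*$-open set $U_r$ with $U_r\cap\Gamma\neq\emptyset$, a functional $\widehat{f}_r\in S_{\mathcal{F}(M)^*}$, and a molecule $u_r\in\Mol(M)$ with $\widehat{f}_r(u_r)=1$ and $\|\widehat{T}^*z^*-\widehat{f}_r\|\leq r+\varepsilon_1$ for all $z^*\in U_r\cap\Gamma$. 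Here the clause $\|m-u_r\|\leq\varepsilon_1$ of Lemma \ref{Lema2} is simply deleted, and the modulus $\eta$ no longer appears because no quantitative proximity of molecules is demanded.

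With these in hand I would repeat the construction. Applying the $ACK_\rho$ structure to $U=U_r\cap\Gamma$ and a small $\varepsilon_2>0$ yields $V\subseteq U$, $y_1^*\in V$, $e\in S_Y$, $F\in\operatorname{L}(Y,Y)$, and $V_1\subseteq\Gamma$ as in Definition \ref{ACK}, and I set
\[ \widehat{S}(x)=\widehat{f}_r(x)Fe+(1-\delta)(\operatorname{Id}_Y-F)\widehat{T}(x). \]
The estimate $\|\widehat{S}\|\leq 1$ is verified as before by splitting $\Gamma$ into $V_1$ (where the defining inequality of $V_1$ in property (4) applies) and $\Gamma\setminus V_1$ (where the bound $|v^*(Fe)|\leq\rho$ and the approximation $\|\widehat{f}_r-\widehat{T}^*v_k^*\|\leq r+\varepsilon_1$ are used), upon choosing $\delta=\frac{3\varepsilon_2+r+\varepsilon_1}{1-\rho+3\varepsilon_2+r+\varepsilon_1}\in[\varepsilon_2,1)$. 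Then $1=|\widehat{f}_r(u_r)|=|y_1^*(\widehat{S}(u_r))|\leq\|\widehat{S}(u_r)\|\leq 1$, so $\|S\|_L=1$ and $\widehat{S}$ attains its norm at $u_r$, i.e.\ $\widehat{S}\in\SA(M,Y)$. Finally the same two-line computation gives $\|\widehat{S}-\widehat{T}\|\leq 2\delta+3\varepsilon_2+r+\varepsilon_1$, and imposing $3\varepsilon_2+r\leq\varepsilon_1$ yields $\|\widehat{S}-\widehat{T}\|\leq 2(\varepsilon_1+\delta)$, which tends to $0$ as $\varepsilon_1,\varepsilon_2,r\to 0$ (the denominator of $\delta$ stays bounded below by $1-\rho>0$, $\rho$ being fixed). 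Choosing the parameters small enough makes this less than $\varepsilon$, proving $\widehat{T}\in\overline{\SA(M,Y)}$.

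The step requiring the most care is conceptual rather than computational: one must recognize that the sole purpose of the modulus $\eta$ in Theorem \ref{teoACK} was to keep the witnessing molecule $u_r$ within $\varepsilon$ of the prescribed molecule $m$, so that once the conclusion is weakened to membership in $\overline{\SA(M,Y)}$ this control can be dropped and $\eta(\varepsilon_1)$ disappears from every inequality, being subsumed by the single density-approximation error $\varepsilon_1$. One should also confirm that the functional $\widehat{f}_r$ furnished by the density lemma is genuinely of norm one and \emph{attains} its norm at the molecule $u_r$ (so that $\widehat{S}$ inherits strong norm attainment at $u_r$); this is precisely the content of the hypothesis that $\SA(M,\mathbb{R})$, rather than merely the set of almost-norm-attaining functions, is dense in $\Lip(M,\mathbb{R})$.
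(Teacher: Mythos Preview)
Your proposal is correct and matches the paper's approach exactly: the paper's proof of Theorem~\ref{ACKden} is a single sentence stating that one repeats the proof of Theorem~\ref{teoACK} using density of $\SA(M,\mathbb{R})$ in place of the Lip-BPB property and omitting the molecule-distance estimate, which is precisely what you carry out in detail. One small caution: your ``density analogue of Lemma~\ref{Lema1}'' as literally stated (for arbitrary $f$ with $\|f\|_L\leq 1$) is false when $\|f\|_L$ is far from $1$, but your parenthetical remark about absorbing $|1-\|f\|_L|$ shows you have the correct bound in mind, and in the application to $\widehat{T}^*y^*$ (with $y^*\in\Gamma$ chosen so that $\|\widehat{T}^*y^*\|$ is close to $1$) this is harmless.
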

	
	As before, we obtain a series of consequences.
	
	\begin{cor}\label{corollary:consequences_ACKrho_density}
		Let $M$ be a pointed metric space such that $\SA(M,\mathbb{R})$ is dense in $\Lip(M,\mathbb{R})$. Then, the following statements hold.
		\begin{enumerate}
		\item Let $K$ be a compact Hausdorff topological space and let $\Gamma=\{\delta_t\colon t\in K\}$ (see Remark~\ref{remark:Gamma-C(K)}). Then $\operatorname{Fl}_\Gamma(\mathcal{F}(M),C(K))\subseteq \overline{\SA(M,C(K))}$.
			\item Let $Z$ be a finite injective tensor product of Banach spaces which have $ACK_\rho$ structure. Then, if $\Gamma$ is the $1$-norming set given by Definition \ref{ACK}, we have $\operatorname{Fl}_\Gamma(\mathcal{F}(M),Z)\subseteq \overline{\SA(M,Z)}$.
			\item Let $K$ be a compact Hausdorff topological space. If $Y \in ACK_\rho$ and $\Gamma$ is the $1$-norming set given by Definition \ref{ACK}, then $\operatorname{Fl}_\Gamma(\mathcal{F}(M),C(K,Y))\subseteq \overline{\SA(M,C(K,Y))}$.
			\item Let $Y \in ACK_\rho$. If $\Gamma$ is the $1$-norming set given by Definition \ref{ACK}, then $$\operatorname{Fl}_\Gamma(\mathcal{F}(M),c_0(Y))\subseteq \overline{\SA(M,c_0(Y))},\quad \operatorname{Fl}_\Gamma(\mathcal{F}(M),\ell_\infty(Y))\subseteq \overline{\SA(M,\ell_\infty(Y))}$$ $$\text{and} \quad  \operatorname{Fl}_\Gamma(\mathcal{F}(M),c_0(Y,w))\subseteq \overline{\SA(M,c_0(Y,w))}.$$
			\item Let $K$ be a compact Hausdorff topological space. If $Y \in ACK_\rho$ and $\Gamma$ is the $1$-norming set given by Definition \ref{ACK}, then $ \operatorname{Fl}_\Gamma(\mathcal{F}(M),C_w(K,Y))\subseteq \overline{\SA(M,C_w(K,Y))}$.
		\end{enumerate}
	\end{cor}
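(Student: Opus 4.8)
The plan is to deduce each of the five inclusions directly from Theorem~\ref{ACKden}, in exactly the same way that Corollary~\ref{corollary:all_consequences_ACKrho} was obtained from Theorem~\ref{teoACK}. The only role of Proposition~\ref{propACK} here is to certify, case by case, that the relevant target Banach space carries an $ACK_\rho$ structure together with its distinguished $1$-norming set $\Gamma\subseteq B_{Y^*}$ coming from Definition~\ref{ACK}. Once this membership is in hand, Theorem~\ref{ACKden} applies verbatim under the standing hypothesis that $\SA(M,\mathbb{R})$ is dense in $\Lip(M,\mathbb{R})$, yielding $\operatorname{Fl}_\Gamma(\mathcal{F}(M),Y)\subseteq\overline{\SA(M,Y)}$ for the space $Y$ at hand.

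First I would treat assertion (1): by Proposition~\ref{propACK}(1), $C(K)$ has simple $ACK$ structure, hence in particular $ACK_\rho$ structure (with $V_1=\Gamma$, so that property (4) of Definition~\ref{ACK} is vacuous and any $\rho$ works), and by Remark~\ref{remark:Gamma-C(K)} the associated $1$-norming set is precisely $\Gamma=\{\delta_t\colon t\in K\}$. Applying Theorem~\ref{ACKden} with $Y=C(K)$ then gives $\operatorname{Fl}_\Gamma(\mathcal{F}(M),C(K))\subseteq\overline{\SA(M,C(K))}$. Assertions (2)--(5) follow in exactly the same manner: I would invoke respectively parts (2), (3), (4) and (5) of Proposition~\ref{propACK} to produce the $ACK_\rho$ structure and the corresponding set $\Gamma$ for a finite injective tensor product $Z$, for $C(K,Y)$, for the sequence spaces $c_0(Y)$, $\ell_\infty(Y)$, $c_0(Y,w)$, and for $C_w(K,Y)$, and then feed each of these spaces into Theorem~\ref{ACKden} to obtain the stated inclusions.

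Since every substantive step is already packaged inside Theorem~\ref{ACKden} and Proposition~\ref{propACK}, I do not expect any genuine obstacle; the argument is purely a matter of assembling previously proved facts. The single point that requires a little care is bookkeeping: one must keep track of which $1$-norming set $\Gamma$ is attached to each space, so that the class $\operatorname{Fl}_\Gamma$ appearing in the conclusion is the one matching the $ACK_\rho$ structure actually used. The only non-formal identification is the one recalled above for $C(K)$, namely $\Gamma=\{\delta_t\colon t\in K\}$; in all the remaining cases $\Gamma$ is simply the set furnished abstractly through Definition~\ref{ACK} by the relevant clause of Proposition~\ref{propACK}, and no further analysis is needed.
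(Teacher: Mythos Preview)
Your proposal is correct and matches the paper's approach exactly: the paper does not even spell out a proof, simply writing ``As before, we obtain a series of consequences,'' meaning that each item follows by combining Theorem~\ref{ACKden} with the corresponding clause of Proposition~\ref{propACK} (together with Remark~\ref{remark:Gamma-C(K)} for item~(1)). Your bookkeeping remark about matching the set $\Gamma$ to the $ACK_\rho$ structure used is the only thing worth noting, and you have noted it.
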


Let us comment that, as happened in Corollary~\ref{corollary:all_consequences_ACKrho}, the following consequence also follows: if the set $\SA(M,\R)$ is dense in $\Lip(M,\R)$ and a Banach space $Y$ has property $\beta$, then $\SA(M,Y)$ is dense in $\Lip(M,Y)$. This result also appeared in \cite{ChiMar-NA}. Actually, a more general result dealing with a property weaker than property $\beta$ called property quasi-$\beta$ holds, see  \cite[Proposition~4.7]{ChiMar-NA}.

We next deal with Lipschitz compact maps. Observe that one of the disadvantages of Theorems \ref{teoACK} and \ref{ACKden} and their consequences Corollaries \ref{corollary:all_consequences_ACKrho} and \ref{corollary:consequences_ACKrho_density} is the need to deal with $\Gamma$-flat operators. But now this requirement disappears: given a Banach space $Y$, every compact operator with $Y$ as codomain is $\Gamma$-flat for every $\Gamma\subseteq B_{Y^*}$, see \cite[Example A]{ACK}. Moreover, we note that if we take a compact operator $\widehat{T}$ in the proof of Theorem \ref{teoACK}, then the operator $\widehat{S}$ that approximates $\widehat{T}$ will be also compact. Consequently, we obtain the following result.
	
	\begin{prop}\label{teoACKcompact}
		Let $M$ be a pointed metric space such that $(M,\mathbb{R})$ has the Lip-BPB property and let $Y$ be an $ACK_\rho$ Banach space. Then, the pair $(M,Y)$ has the Lip-BPB property for Lipschitz compact maps.
	\end{prop}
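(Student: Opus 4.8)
The plan is to obtain this as a direct consequence of Theorem \ref{teoACK}, using two observations from the preceding discussion. First, I would note that a Lipschitz map $F\colon M\longrightarrow Y$ is Lipschitz compact precisely when its associated operator $\widehat{F}\in\operatorname{L}(\mathcal{F}(M),Y)$ is compact, and that every compact operator is $\Gamma$-flat for any $\Gamma\subseteq B_{Y^*}$ by \cite[Example A]{ACK}. Hence any Lipschitz compact map automatically satisfies the $\Gamma$-flatness hypothesis of Theorem \ref{teoACK}, and one may take the very same function $\varepsilon\longmapsto\eta(\varepsilon,\rho)$ provided by that theorem.

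The one point requiring verification is that the approximating operator produced in the proof of Theorem \ref{teoACK} inherits compactness from $\widehat{T}$. Recall that this operator has the form
\[ \widehat{S}(x)=\widehat{f}_r(x)\,Fe+(1-\delta)(\operatorname{Id}_Y-F)\widehat{T}(x).\]
The first summand has image contained in the one-dimensional span of $Fe$, so it is rank-one and hence compact; the second summand is the composition of the bounded operator $(1-\delta)(\operatorname{Id}_Y-F)$ with the compact operator $\widehat{T}$, and is therefore compact. As a sum of two compact operators, $\widehat{S}$ will be compact, so the associated Lipschitz map $S$ lies in $\Lipc(M,Y)$.

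Assembling these facts, the argument would run as follows: given $\varepsilon>0$, take $\eta(\varepsilon,\rho)$ as in Theorem \ref{teoACK}; for any norm-one $\widehat{F}\in\Lipc(M,Y)$ and any molecule $m\in\Mol(M)$ with $\|\widehat{F}(m)\|>1-\eta(\varepsilon,\rho)$, regard $\widehat{F}=\widehat{T}$ as a $\Gamma$-flat operator and apply Theorem \ref{teoACK}. The resulting $\widehat{S}$ and molecule $u$ satisfy all the required estimates, and by the previous paragraph $\widehat{S}$ is compact, so $S\in\Lipc(M,Y)$ strongly attains its norm. This is exactly the Lip-BPB property for Lipschitz compact maps. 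I expect no genuine obstacle here beyond the compactness check, which is immediate from the explicit form of $\widehat{S}$.
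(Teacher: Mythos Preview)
Your proposal is correct and follows exactly the same route as the paper: compact operators are $\Gamma$-flat by \cite[Example A]{ACK}, and the operator $\widehat{S}$ built in the proof of Theorem~\ref{teoACK} is compact whenever $\widehat{T}$ is. Your explicit justification that $\widehat{S}$ is a rank-one perturbation of a bounded operator composed with $\widehat{T}$ simply spells out what the paper leaves implicit.
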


	Again, in view of Proposition \ref{propACK}, we obtain a series of implications.

	\begin{cor}\label{corollary:ACK-rho-compact-consequences}
		Let $M$ be a pointed metric space such that $(M,\mathbb{R})$ has the Lip-BPB property. Then, the following statements hold.
		\begin{enumerate}[(1)]
            \item Let $K$ be a compact Hausdorff topological space. Then, $(M,C(K))$ has the Lip-BPB property for Lipschitz compact maps.
			\item Let $Z$ be a finite injective tensor product of Banach spaces which have $ACK_\rho$ structure. Then, $(M,Z)$ has the Lip-BPB property for Lipschitz compact maps.
			\item Let $K$ be a compact Hausdorff topological space. If $Y \in ACK_\rho$, then $(M,C(K,Y))$ and $(M,C_w(K,Y))$ have the Lip-BPB property for Lipschitz compact maps.
			\item Let $Y \in ACK_\rho$. Then, $(M,c_0(Y))$, $(M,\ell_\infty(Y))$, and $(M,c_0(Y,w))$ have the Lip-BPB property for Lipschitz compact maps.
		\end{enumerate}
	\end{cor}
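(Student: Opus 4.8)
The plan is to deduce every item directly from Proposition \ref{teoACKcompact}, whose hypothesis on the domain ($(M,\mathbb{R})$ has the Lip-BPB property) is exactly what we are assuming throughout the statement. Thus the only thing that needs verification in each case is that the target space is an $ACK_\rho$ Banach space, and this is precisely what the structural results collected in Proposition \ref{propACK} provide. In other words, the corollary is a formal combination of these two facts, item by item.

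First I would match each assertion with the relevant part of Proposition \ref{propACK}. For (1), part (1) of that proposition gives that $C(K)$ has simple $ACK$ structure; a space with simple $ACK$ structure is in particular $ACK_\rho$ (one has $V_1=\Gamma$ in Definition \ref{ACK}, so the parameter $\rho$ is vacuous and may be taken arbitrarily, e.g. $\rho=0$), whence Proposition \ref{teoACKcompact} applies. For (2), part (2) shows that a finite injective tensor product of $ACK_\rho$ spaces is again $ACK_\rho$, so $Z\in ACK_\rho$ and Proposition \ref{teoACKcompact} applies. For (3), parts (3) and (5) give $C(K,Y)\in ACK_\rho$ and $C_w(K,Y)\in ACK_\rho$, respectively, whenever $Y\in ACK_\rho$. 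For (4), part (4) gives that $c_0(Y)$, $\ell_\infty(Y)$, and $c_0(Y,w)$ all belong to $ACK_\rho$ when $Y\in ACK_\rho$. In each case one simply invokes Proposition \ref{teoACKcompact} to conclude that the corresponding pair has the Lip-BPB property for Lipschitz compact maps. Note that since Proposition \ref{teoACKcompact} holds for every value of the parameter, it is irrelevant that different spaces may come with different $\rho$.

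There is essentially no obstacle at the level of the corollary itself: the genuine content has already been carried out in Proposition \ref{teoACKcompact}, which rests on the proof of Theorem \ref{teoACK} together with the observation that, when the given operator $\widehat{T}$ is compact, the approximating operator $\widehat{S}(x)=\widehat{f}_r(x)\,Fe+(1-\delta)(\operatorname{Id}_Y-F)\widehat{T}(x)$ is again compact, being the sum of a rank-one operator and the composition $(1-\delta)(\operatorname{Id}_Y-F)\circ\widehat{T}$ of a bounded operator with the compact $\widehat{T}$. The only point deserving a word of justification is the reduction of ``simple $ACK$'' to ``$ACK_\rho$'' used in item (1), which is immediate from Definition \ref{ACK}. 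Consequently the statement follows at once from Propositions \ref{teoACKcompact} and \ref{propACK}, and no further computation is required.
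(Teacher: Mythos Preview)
Your proposal is correct and matches the paper's approach exactly: the corollary is stated immediately after Proposition~\ref{teoACKcompact} with the remark that it follows ``in view of Proposition~\ref{propACK}'', and no further argument is given. Your item-by-item matching of Proposition~\ref{propACK} with the target spaces, together with the observation that simple $ACK$ structure is a particular case of $ACK_\rho$, is precisely what is needed (and your parenthetical reminder of why $\widehat{S}$ remains compact is the content of the paragraph preceding Proposition~\ref{teoACKcompact}).
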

	
Let us comment that the particular case when $Y$ has property $\beta$ in the above proposition already appeared in \cite[Proposition~4.13]{ChiMar-NA}. Also, item (1) above is covered by \cite[Proposition~4.17]{ChiMar-NA}.

	As done in Proposition \ref{teoACKcompact}, the proof of Theorem \ref{ACKden} can be easily adapted to the density of Lipschitz compact maps.

	\begin{prop}\label{prop:ACK-rho-density-compact}
		Let $M$ be a pointed metric space such that $\SA(M,\mathbb{R})$ is dense in $\Lip(M,\mathbb{R})$ and $Y \in ACK_\rho$ be a Banach space. Then, $\SA_K(M,Y)$ is dense in $\Lipc(M,Y)$.
	\end{prop}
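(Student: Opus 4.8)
The plan is to adapt the proof of Theorem~\ref{ACKden} exactly as Proposition~\ref{teoACKcompact} adapts the proof of Theorem~\ref{teoACK}; the only genuinely new ingredient is the observation that the approximation scheme preserves compactness. First I would recall that, since $\SA(M,\mathbb{R})$ is dense in $\Lip(M,\mathbb{R})$ and $Y\in ACK_\rho$, Theorem~\ref{ACKden} provides the inclusion $\operatorname{Fl}_\Gamma(\mathcal{F}(M),Y)\subseteq\overline{\SA(M,Y)}$, where $\Gamma\subseteq B_{Y^*}$ is the $1$-norming set furnished by Definition~\ref{ACK}.

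Next I would take an arbitrary $T\in\Lipc(M,Y)$, which we may assume to satisfy $\|T\|_L=1$. Its associated operator $\widehat{T}\in\operatorname{L}(\mathcal{F}(M),Y)$ is then compact, and since every compact operator with codomain $Y$ is $\Gamma$-flat for every $\Gamma\subseteq B_{Y^*}$ (see \cite[Example~A]{ACK}), we have $\widehat{T}\in\operatorname{Fl}_\Gamma(\mathcal{F}(M),Y)$. Hence $\widehat{T}$ falls within the scope of the construction used to prove Theorem~\ref{ACKden}.

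The key step is then to re-examine that construction and to verify that the approximating operator it produces is itself compact. Indeed, in the proof of Theorem~\ref{teoACK}---whose scheme is reused for Theorem~\ref{ACKden}, merely dropping the control on $\|m-u\|$---the operator approximating $\widehat{T}$ has the explicit form
\[ \widehat{S}(x)=\widehat{f}_r(x)\,Fe+(1-\delta)(\operatorname{Id}_Y-F)\widehat{T}(x), \]
where $\widehat{f}_r\in S_{\mathcal{F}(M)^*}$, $e\in S_Y$, $F\in\operatorname{L}(Y,Y)$ and $\delta\in[0,1)$. The first summand $x\longmapsto\widehat{f}_r(x)\,Fe$ is a rank-one operator, hence compact; the second summand is the composition of the bounded operator $(1-\delta)(\operatorname{Id}_Y-F)$ with the compact operator $\widehat{T}$, and is therefore also compact. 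Consequently $\widehat{S}$ is compact, that is, $S\in\Lipc(M,Y)$, and since $\widehat{S}$ strongly attains its norm we obtain $S\in\SA_K(M,Y)$.

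Putting these observations together, for every $\varepsilon>0$ the construction of Theorem~\ref{ACKden} yields some $S\in\SA_K(M,Y)$ with $\|S-T\|_L<\varepsilon$, which proves that $\SA_K(M,Y)$ is dense in $\Lipc(M,Y)$. I do not anticipate any real obstacle here: all of the analytic work has already been carried out in the proof of Theorem~\ref{ACKden}, and the single additional point---that the explicit approximant is a rank-one perturbation of a compact operator, and hence compact---follows immediately upon inspecting the formula for $\widehat{S}$.
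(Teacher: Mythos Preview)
Your proposal is correct and follows exactly the approach the paper indicates: the paper does not give a standalone proof but simply remarks that, as in Proposition~\ref{teoACKcompact}, the proof of Theorem~\ref{ACKden} adapts by observing that if $\widehat{T}$ is compact then so is the approximant $\widehat{S}$. Your explicit verification that $\widehat{S}$ is a rank-one perturbation of a compact operator is precisely the observation the paper has in mind.
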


	As before, this result has many consequences.
	
	\begin{cor}\label{corollary6.9}
		Let $M$ be a pointed metric space such that $\SA(M,\mathbb{R})$ is dense in $\Lip(M,\mathbb{R})$. Then, the following statements hold.
		\begin{enumerate}
			\item Let $K$ be a compact Hausdorff topological space. Then, the set $\SA_K(M,C(K))$ is dense in $\Lipc(M,C(K))$.
			\item Let $Z$ be a finite injective tensor product of Banach spaces which have $ACK_\rho$ structure. Then, $\SA_K(M,Z)$ is dense in $\Lipc(M,Z)$.
			\item Let $K$ be a compact Hausdorff topological space. If $Y \in ACK_\rho$, then $\SA_K(M,C(K,Y))$ and $\SA_K(M,C_w(K,Y))$ are dense in $\Lipc(M,C(K,Y))$ and $\Lipc(M,C_w(K,Y))$, respectively.
			\item Let $Y \in ACK_\rho$. Then, $\SA_K(M,c_0(Y))$, $\SA_K(M,\ell_\infty(Y))$, and $\SA_K(M,c_0(Y,w))$ are dense in $\Lipc(M,c_0(Y))$, $\Lipc(M,\ell_\infty(Y))$, and $\Lipc(M,c_0(Y,w))$, respectively.
		\end{enumerate}
	\end{cor}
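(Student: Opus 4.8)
The plan is to obtain each of the four items as a direct instantiation of Proposition~\ref{prop:ACK-rho-density-compact}, whose domain hypothesis (that $\SA(M,\R)$ is dense in $\Lip(M,\R)$) is precisely the standing assumption of the corollary. Consequently, the only thing that remains to be verified in each case is that the prescribed target space carries $ACK_\rho$ structure, and for this I would appeal to the matching item of Proposition~\ref{propACK}. In other words, the scheme is: feed each listed range space into the hypothesis ``$Y\in ACK_\rho$'' of Proposition~\ref{prop:ACK-rho-density-compact}, the verification of which is exactly what Proposition~\ref{propACK} supplies.

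Concretely, for (1) I would note that $C(K)$ has simple $ACK$ structure by Proposition~\ref{propACK}(1), hence in particular $ACK_\rho$ structure (with $\rho$ redundant), so Proposition~\ref{prop:ACK-rho-density-compact} applied with $Y=C(K)$ yields the density of $\SA_K(M,C(K))$ in $\Lipc(M,C(K))$. For (2), the finite injective tensor product $Z$ of $ACK_\rho$ spaces is itself $ACK_\rho$ by Proposition~\ref{propACK}(2), and a single application of Proposition~\ref{prop:ACK-rho-density-compact} to $Y=Z$ gives the claim. For (3), Proposition~\ref{propACK}(3) and~(5) guarantee that $C(K,Y)$ and $C_w(K,Y)$ are $ACK_\rho$ whenever $Y\in ACK_\rho$, so the two density statements follow by two further applications of the same proposition; here I would take care to carry the ambient assumption $Y\in ACK_\rho$ through, as it is exactly what triggers the relevant items of Proposition~\ref{propACK}. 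Finally, for (4), Proposition~\ref{propACK}(4) tells us that $c_0(Y)$, $\ell_\infty(Y)$, and $c_0(Y,w)$ all inherit $ACK_\rho$ structure from $Y$, and a last round of Proposition~\ref{prop:ACK-rho-density-compact} completes the argument.

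I do not expect any genuine obstacle in this corollary. All of the analytic content has already been carried out in Proposition~\ref{prop:ACK-rho-density-compact}, which itself is obtained by adapting the proof of Theorem~\ref{ACKden} and observing that the compactness of $\widehat{T}$ is inherited by the approximating operator $\widehat{S}$ built there (so that $\widehat{S}$ is again Lipschitz compact). The structural input, in turn, is packaged entirely in Proposition~\ref{propACK}. Thus the corollary is a mechanical combination of these two results, with one application of Proposition~\ref{prop:ACK-rho-density-compact} for each listed target space; the proof amounts to little more than citing the two propositions in tandem.
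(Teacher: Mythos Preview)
Your proposal is correct and matches the paper's own (implicit) argument: the corollary is stated immediately after Proposition~\ref{prop:ACK-rho-density-compact} with the remark ``As before, this result has many consequences,'' meaning exactly that each item follows by combining that proposition with the corresponding item of Proposition~\ref{propACK}. There is nothing to add.
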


Let us comment another result which follows from Proposition \ref{prop:ACK-rho-density-compact}: property $\beta$ of $Y$ is enough to pass from the density of $\SA(M,\R)$ to the density of $\SA_K(M,Y)$. Actually, \cite[Proposition 4.15]{ChiMar-NA} gives a stronger result: let $M$ be a pointed metric space such that $\SA(M,\mathbb{R})$ is dense in $\Lip(M,\mathbb{R})$ and let $Y$ be a Banach space having property quasi-$\beta$ (a property weaker than property $\beta$); then $\overline{\SA_K(M,Y)}=\Lipc(M,Y)$. Besides, item (1) above is covered by \cite[Corollary 4.19]{ChiMar-NA}.
	
We would like to finish this section by presenting some more results on the Lip-BPB property for Lipschitz compact maps and on the density of strongly norm attaining Lipschitz compact maps. They will follow from a couple of results from \cite{ChiMar-NA} which were not applied there to get those consequences. We start with a result on the Lip-BPB property for Lipschitz compact maps.
	
	\begin{prop}[\textrm{\cite[Proposition 4.16]{ChiMar-NA}}]\label{projections}
		Let $M$ be a pointed metric space and let $Y$ be a Banach space. Suppose that there exists a net of norm-one projections $\{Q_\lambda\}_{\lambda \in \Lambda} \subset \operatorname{L}(Y,Y)$ such that $\{Q_\lambda(y)\}\longrightarrow y$ in norm for every $y \in Y$. If there is a function $\eta\colon \mathbb{R}^+ \longrightarrow \mathbb{R}^+$ such that for every $\lambda \in \Lambda$, the pair $(M,Q_\lambda(Y))$ has the Lip-BPB property for Lipschitz compact maps witnessed by the function $\eta$, then the pair $(M,Y)$ has the Lip-BPB property for Lipschitz compact maps.
	\end{prop}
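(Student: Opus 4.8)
The plan is to use the compactness of the datum to push the problem into one of the $1$-complemented subspaces $Q_\lambda(Y)$, where the Lip-BPB property for Lipschitz compact maps already holds with a \emph{uniform} gauge $\eta$. The engine is the classical fact that a uniformly bounded net of operators converging to the identity pointwise in norm converges \emph{uniformly on norm-compact sets}. Writing $\widehat{F}\in\operatorname{L}(\F(M),Y)$ for the norm-one compact datum, the set $\widehat{F}(B_{\F(M)})$ is relatively compact and $\|Q_\lambda\|=1$ for all $\lambda$, so for every $\delta>0$ there is $\lambda\in\Lambda$ with $\|Q_\lambda\widehat{F}-\widehat{F}\|<\delta$.

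First I would fix $\varepsilon>0$, choose an auxiliary $\varepsilon'\in(0,\varepsilon)$ together with a small $\delta>0$ subject to $\varepsilon'+2\delta<\varepsilon$ and $\eta(\varepsilon')(1+\delta)-2\delta>0$, and define the new gauge by $\widetilde{\eta}(\varepsilon):=\eta(\varepsilon')(1+\delta)-2\delta>0$. Given a norm-one compact $\widehat{F}$ and $m\in\Mol(M)$ with $\|\widehat{F}(m)\|>1-\widetilde{\eta}(\varepsilon)$, I pick $\lambda$ as above and set $T:=Q_\lambda\widehat{F}\colon\F(M)\longrightarrow Q_\lambda(Y)$, which is compact and satisfies $1-\delta\leq\|T\|\leq 1+\delta$ together with $\|T(m)\|>1-\widetilde{\eta}(\varepsilon)-\delta$. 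Normalizing, $\widetilde{T}:=T/\|T\|$ is a norm-one compact operator into $Q_\lambda(Y)$, and the value of $\widetilde{\eta}(\varepsilon)$ is chosen precisely so that $\|\widetilde{T}(m)\|>(1-\widetilde{\eta}(\varepsilon)-\delta)/(1+\delta)=1-\eta(\varepsilon')$.

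Next I would apply the Lip-BPB property for Lipschitz compact maps of $(M,Q_\lambda(Y))$ with parameter $\varepsilon'$ and its witness $\eta$: this yields a \emph{compact} $\widehat{G}\colon\F(M)\longrightarrow Q_\lambda(Y)$ and $u\in\Mol(M)$ with $\|\widehat{G}(u)\|=\|G\|_L=1$, $\|\widehat{G}-\widetilde{T}\|<\varepsilon'$ and $\|m-u\|<\varepsilon'$. Since $Q_\lambda(Y)$ is isometrically a subspace of $Y$, composing with the inclusion turns $\widehat{G}$ into a norm-one compact operator $\F(M)\to Y$ that still attains its norm at $u$; and the triangle inequality gives $\|\widehat{G}-\widehat{F}\|\leq\|\widehat{G}-\widetilde{T}\|+|\,\|T\|-1\,|+\|T-\widehat{F}\|<\varepsilon'+2\delta<\varepsilon$, together with $\|m-u\|<\varepsilon'<\varepsilon$. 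This establishes the Lip-BPB property for Lipschitz compact maps of $(M,Y)$ with gauge $\widetilde{\eta}$.

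The step I expect to be the real crux is the very first one: the uniformity of $\eta$ across all $\lambda$ is indispensable, since the index $\lambda$ forced on us by the compactness of $\widehat{F}$ would otherwise come with an uncontrolled gauge and no single $\widetilde{\eta}$ could be extracted. The remaining delicate point is the bookkeeping that keeps both the norm gap $|\,\|T\|-1\,|$ and the normalization error proportional to $\delta$, so that they are absorbed into the $\varepsilon$-budget simultaneously with the $\varepsilon'$ coming from the complemented piece; everything else (compactness is preserved under composition and scaling, and $Q_\lambda(Y)\hookrightarrow Y$ is isometric) is routine.
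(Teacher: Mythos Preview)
The paper does not prove this proposition at all: it is quoted verbatim from \cite[Proposition~4.16]{ChiMar-NA} and used as a black box to derive Corollary~\ref{codomaincompact}. Hence there is no proof in this paper to compare against.

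That said, your argument is correct and is exactly the natural one. The only points worth tightening are cosmetic. First, since $\|Q_\lambda\|=1$ you actually have $\|T\|\leq 1$, so the normalization is harmless in an even simpler way than you wrote (dividing by $\|T\|\leq 1$ can only help the inequality $\|\widetilde{T}(m)\|>1-\eta(\varepsilon')$); your bound via $1+\delta$ is valid but unnecessarily generous. Second, it is worth stating explicitly why the ``engine'' step works: given a totally bounded set $K\subset Y$ and a uniformly bounded net $(Q_\lambda)$ converging pointwise to $\Id_Y$, pick a finite $\delta/3$-net $\{y_1,\dots,y_n\}$ for $K$, use directedness of $\Lambda$ to find a single $\lambda$ with $\|Q_\lambda y_i-y_i\|<\delta/3$ for all $i$, and conclude $\sup_{y\in K}\|Q_\lambda y-y\|<\delta$. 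Third, to guarantee $\widetilde\eta(\varepsilon)>0$ you need $\delta<\eta(\varepsilon')/(2-\eta(\varepsilon'))$, which is compatible with $\varepsilon'+2\delta<\varepsilon$; you assert this is possible but a reader might appreciate the one-line justification. Finally, your closing remark that the \emph{uniformity} of $\eta$ in $\lambda$ is the crux is exactly right: without it the index $\lambda$ chosen from the compactness argument would carry its own gauge and no global $\widetilde\eta$ would emerge.
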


	The following result collects several consequences of the proposition above. None of them was included in \cite{ChiMar-NA}. Observe that item (1) below extends items (1) and (3) of our Corollary \ref{corollary:ACK-rho-compact-consequences}.
	
	\begin{cor}\label{codomaincompact}
		Let $M$ be a pointed metric space and let $Y$ be a Banach space such that the pair $(M,Y)$ has the Lip-BPB property for Lipschitz compact maps.
		\begin{enumerate}
			\item  For every compact Hausdorff topological space $K$, the pair $(M,C(K,Y))$ has the Lip-BPB property for Lipschitz compact maps.
			\item For $1\leq p<\infty$, if the pair $(M,\ell_p(Y))$ has the Lip-BPB property for Lipschitz compact maps, then so does $(M,L_p(\mu,Y))$ for every positive measure $\mu$.
			\item For every $\sigma$-finite positive measure $\mu$, the pair $(M,L_\infty(\mu,Y))$ has the Lip-BPB property for Lipschitz compact maps.
		\end{enumerate}
	\end{cor}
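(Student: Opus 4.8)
The plan is to deduce all three items from Proposition \ref{projections} by exhibiting, for each target space $Z\in\{C(K,Y),\,L_p(\mu,Y),\,L_\infty(\mu,Y)\}$, a net $\{Q_\lambda\}$ of norm-one projections on $Z$ with $Q_\lambda(z)\to z$ in norm for every $z\in Z$, such that each range $Q_\lambda(Z)$ is isometric to a space for which $(M,Q_\lambda(Z))$ has the Lip-BPB property for Lipschitz compact maps with a witnessing function $\eta$ \emph{independent of $\lambda$}. Since this is exactly what Proposition \ref{projections} consumes, the conclusion $(M,Z)$ follows. The common reduction is that each range is a finite or countable absolute sum of copies of $Y$ (an $\ell_\infty^n(Y)$, $\ell_\infty(Y)$, or $\ell_p^n(Y)$), which inherits the property from the standing hypothesis with a uniform $\eta$. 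For $\ell_\infty$-type sums this is the elementary observation that if $\widehat T=(\widehat T_i)$ has $\|\widehat T\|=1$ and $\|\widehat T(m)\|>1-\eta$, then some coordinate already satisfies $\|\widehat T_{i_0}\|\le 1$ and $\|\widehat T_{i_0}(m)\|>1-\eta$, so correcting only that coordinate (leaving the others untouched, the norm still $1$, and taking a compact approximant since $(M,Y)$ has the compact property) produces the desired $\widehat S$ with the \emph{same} $\eta$. For the $\ell_p$-type sums it is the inheritance by absolute summands recorded in Proposition \ref{recdomain}, whose $\eta$ depends only on that of $\ell_p(Y)$ and so is uniform in the number of coordinates.

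For item (1) I would take the net indexed by finite open covers of $K$ of decreasing mesh, each with a subordinate partition of unity $\{\phi_1,\dots,\phi_n\}$ and nodes $t_i$ chosen so that $\phi_j(t_i)=\delta_{ij}$, and set $Q(f)=\sum_i\phi_i\,f(t_i)$. Such a ``peaked'' partition of unity can be produced by extending the assignment $t_i\mapsto e_i$ to a continuous map $K\to\Delta_{n-1}$ into the simplex (keeping each $\phi_i$ concentrated near $t_i$). The nodal condition makes $Q$ an idempotent with $\|Q\|=1$ whose range is isometric to $\ell_\infty^n(Y)$ via $(y_i)\mapsto\sum_i\phi_i y_i$, while subordination together with uniform continuity of $f$ on the compact space $K$ gives $\|Q(f)-f\|\to 0$. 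Feeding $\ell_\infty^n(Y)$ (uniform $\eta$, as above) into Proposition \ref{projections} yields $(M,C(K,Y))$.

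For items (2) and (3) I would use conditional-expectation projections. For $L_p(\mu,Y)$ with $1\le p<\infty$, take the net of finite measurable partitions $\{A_i\}$ of a $\sigma$-finite carrier of the given function, ordered by refinement, with $Q_\pi(f)=\sum_i\bigl(\tfrac1{\mu(A_i)}\int_{A_i}f\,d\mu\bigr)\chi_{A_i}$; these are norm-one projections, they converge in $L_p$-norm by density of simple functions, and each range is isometric to $\ell_p^n(Y)$, an absolute $\ell_p$-summand of $\ell_p(Y)$, so the hypothesis on $(M,\ell_p(Y))$ gives uniform $\eta$. For $L_\infty(\mu,Y)$ the same recipe with \emph{finite} partitions fails when $Y$ is infinite-dimensional, since finitely-valued simple functions need not be dense in $L_\infty(\mu,Y)$; instead I would pass to conditional expectations onto \emph{countable} measurable partitions. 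Every strongly measurable essentially bounded $f$ is essentially separably valued, hence approximable in the essential-sup norm by a countably-valued simple function; averaging over a countable partition refining the corresponding level sets gives $\|Q_\pi f-f\|_\infty\le 2\varepsilon$, a bound preserved under all further refinements. The ranges are then isometric to $\ell_\infty(Y)$, which again carries the property with the uniform $\eta$ of the $\ell_\infty$-sum, and Proposition \ref{projections} gives $(M,L_\infty(\mu,Y))$.

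The step I expect to be the main obstacle is the \emph{strong norm convergence} of these projection nets together with the exact norm-one and isometry requirements demanded by Proposition \ref{projections}. In the $C(K,Y)$ case this is the construction of peaked partitions of unity on an arbitrary compact Hausdorff $K$ that simultaneously yield isometric $\ell_\infty^n(Y)$ ranges and converge to the identity. In the $L_\infty(\mu,Y)$ case it is the realization that one must move from finite to countable partitions (so that the ranges become $\ell_\infty(Y)$ rather than $\ell_\infty^n(Y)$), which hinges on essential separability of the values. Verifying that every one of these reductions preserves a single function $\eta$ across the whole net is the remaining bookkeeping that makes Proposition \ref{projections} applicable.
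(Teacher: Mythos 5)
Your proposal is correct and follows essentially the same route as the paper: both items reduce to Proposition \ref{projections} via nets of norm-one projections whose ranges are isometric to $\ell_\infty^n(Y)$, $\ell_p$-sums of copies of $Y$, and $\ell_\infty(Y)$ respectively, with the uniform witnessing function $\eta$ supplied by Propositions \ref{prop6.15} and \ref{recdomain}. The only (minor) deviations are that you construct the projection nets explicitly (peaked partitions of unity, conditional expectations) where the paper cites Theorem 4 of \cite{Johnson} and Lemma 3.12 of \cite{dgmm}, and that in item (2) you take finite $\ell_p^n(Y)$ ranges and invoke the absolute-summand transfer, whereas the paper obtains ranges isometric to the whole $\ell_p(Y)$.
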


	\begin{proof}
		This proof is based on the proof of Theorem 3.15 in \cite{dgmm}. To prove (1),  following the proof of Theorem 4 in \cite{Johnson}, by using peak partitions of the unit we can find a net $\{Q_\lambda\}_{\lambda \in \Lambda}$ of norm-one projections on $C(K,Y)$ such that $\{Q_\lambda(f)\}\longrightarrow f$ in norm for every $f \in C(K,Y)$ and $Q_\lambda(C(K,Y))$ is isometrically isomorphic to $\ell_\infty^m(Y) $. Consequently, (1) follows from Propositions \ref{prop6.15} and \ref{projections}.
		
		In order to prove (2), fix $1\leq p<\infty$. If $L_1 (\mu)$ is finite-dimensional, the result is a consequence of Proposition \ref{recdomain}. Otherwise, by using Lemma 3.12 in \cite{dgmm} we may find a net $\{Q_\lambda\}_{\lambda \in \Lambda}$ of norm-one projections on $L_p(\mu,Y)$ such that $\{Q_\lambda\}\longrightarrow f$ in norm for every $f \in L_p(\mu,Y)$ and $Q_\lambda(L_p(\mu,Y))$ is isometrically isomorphic to $\ell_p(Y)$. Therefore, it is enough to apply Proposition \ref{projections}.
		
		As before, if $L_\infty (\mu)$ is finite-dimensional, the result is a consequence of Proposition \ref{prop6.15}. Otherwise, if $L_\infty(\mu)$ is infinite-dimensional, we may suppose that the measure is finite by using Proposition 1.6.1 in \cite{cm}. Then, Lemma 3.12 in \cite{dgmm} provides a net $\{Q_\lambda\}_{\lambda \in \Lambda}$ of norm-one projections on $L_\infty(\mu,Y)$ such that $\{Q_\lambda\}\longrightarrow f$ in norm for every $f \in L_\infty(\mu,Y)$ and $Q_\lambda(L_p(\mu,Y))$ is isometrically isomorphic to $\ell_\infty(Y)$. Consequently, the result follows from Propositions \ref{prop6.15} and \ref{projections}.\qedhere
	\end{proof}

An analogous result to Proposition~\ref{projections} also appeared in \cite{ChiMar-NA} for the density of strongly norm attaining Lipschitz compact maps.
	
	\begin{prop}[\textrm{\cite[Proposition~4.18]{ChiMar-NA}}]\label{prop_Qlambda_density}
		Let $M$ be a pointed metric space and $Y$ be a Banach space. Suppose that there exists a net of norm-one projections $\{Q_\lambda\}_{\lambda \in \Lambda} \subset \operatorname{L}(Y,Y)$ such that $\{Q_\lambda(y)\}\longrightarrow y$ in norm for every $y \in Y$. If $\SA_K(M,Q_\lambda(Y))$ is dense in $\Lipc(M,Q_\lambda(Y))$ for every $\lambda\in \Lambda$, then
		\[\overline{\SA_K(M,Y)}=\Lipc(M,Y).\]
	\end{prop}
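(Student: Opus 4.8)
The plan is to mimic the strategy behind Proposition~\ref{projections}, but replacing the Lip-BPB property by plain density and discarding the control on the point of norm attainment. Fix a Lipschitz compact map $F\in\Lipc(M,Y)$ and $\eps>0$; the goal is to produce $G\in\SA_K(M,Y)$ with $\|F-G\|_L<\eps$. The key structural fact I would exploit is that the associated operator $\widehat{F}\in\operatorname{L}(\F(M),Y)$ is compact, so that $\widehat{F}(B_{\F(M)})$ is relatively compact in $Y$.

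First I would use the hypothesis that $\{Q_\lambda\}$ is a net of norm-one (hence uniformly bounded) projections with $Q_\lambda(y)\to y$ for every $y\in Y$. A uniformly bounded net of operators converging pointwise converges uniformly on compact sets; applying this on the relatively compact set $\widehat{F}(B_{\F(M)})$ would give
\[
\|\widehat{F}-Q_\lambda\widehat{F}\|=\sup_{x\in B_{\F(M)}}\|(\Id_Y-Q_\lambda)\widehat{F}(x)\|\longrightarrow 0.
\]
Equivalently, writing $F_\lambda=Q_\lambda\circ F$ (so that $\widehat{F_\lambda}=Q_\lambda\widehat{F}$), one has $\|F-F_\lambda\|_L\to 0$, and I may fix $\lambda\in\Lambda$ with $\|F-F_\lambda\|_L<\eps/2$. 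Since $F_\lambda$ takes values in the subspace $Q_\lambda(Y)$ and $\widehat{F_\lambda}=Q_\lambda\widehat{F}$ is compact, we have $F_\lambda\in\Lipc(M,Q_\lambda(Y))$.

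Next I would invoke the standing hypothesis that $\SA_K(M,Q_\lambda(Y))$ is dense in $\Lipc(M,Q_\lambda(Y))$, which yields $G\in\SA_K(M,Q_\lambda(Y))$ with $\|F_\lambda-G\|_L<\eps/2$, the Lipschitz norm being computed in $\Lip(M,Q_\lambda(Y))$. The crucial observation is that $Q_\lambda(Y)$ carries the norm induced from $Y$, so the Lipschitz norm of a map into $Q_\lambda(Y)$ is unchanged when the map is viewed as taking values in $Y$, and a molecule at which the map strongly attains its norm yields the same value in either target. Hence, regarding $G$ as an element of $\Lip(M,Y)$, it stays Lipschitz compact and still strongly attains its norm, so $G\in\SA_K(M,Y)$, while $\|F-G\|_L\le\|F-F_\lambda\|_L+\|F_\lambda-G\|_L<\eps$, which is what is needed.

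The only genuinely non-formal point is the passage from pointwise convergence of $\{Q_\lambda\}$ to the norm convergence $\|(\Id_Y-Q_\lambda)\widehat{F}\|\to 0$, i.e.\ uniform convergence on the compact set $\overline{\widehat{F}(B_{\F(M)})}$; this is the standard fact that compact operators are uniformly approximable, and it uses crucially that $\widehat{F}$ is compact, which is precisely where the restriction to \emph{Lipschitz compact} maps is essential. Everything else is bookkeeping about the isometric identification between $\Lip(M,\cdot)$ and $\operatorname{L}(\F(M),\cdot)$ and the fact that the subspace $Q_\lambda(Y)$ inherits both its norm and its norm-attaining behaviour from $Y$.
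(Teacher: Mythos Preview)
Your proof is correct and follows exactly the natural strategy one would expect: approximate the compact operator $\widehat{F}$ by $Q_\lambda\widehat{F}$ using that pointwise convergence of a uniformly bounded net implies uniform convergence on relatively compact sets, then apply the density hypothesis in $Q_\lambda(Y)$ and embed back into $Y$. Note that the paper does not actually give a proof of this proposition---it is quoted verbatim from \cite[Proposition~4.18]{ChiMar-NA}---so there is no in-paper argument to compare against; your argument is precisely the standard one and is presumably what the cited reference contains.
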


Now, by using this proposition instead of Proposition \ref{projections} and replacing the necessary results by their analogous versions with respect to $\SA_K(M,Y)$, the proof of  Corollary \ref{codomaincompact} can be easily adapted to get the following results about the density of strongly norm attaining Lipschitz compact maps.

None of the results appeared in \cite{ChiMar-NA}. Besides, item (1) below extends items (1) and (3) of our Corollary~\ref{corollary6.9}.
	
	\begin{cor}\label{codomainSNAcompact}
		Let $M$ be a pointed metric space and let $Y$ be a Banach space such that $\SA_K(M,Y)$ is dense in $\Lipc(M,Y)$.
		\begin{enumerate}
			\item $\SA_K(M,C(K,Y))$ is dense in $\Lipc(M,C(K,Y))$ for every compact Hausdorff topological space $K$.
			\item For $1\leq p<\infty$, if $\SA_K(M,\ell_p(Y))$ is dense in $\Lipc(M,\ell_p(Y))$, then we have that the set $\SA_K(M,L_p(\mu,Y))$ is dense in $\Lipc(M,L_p(\mu,Y))$ for every positive measure $\mu$.
			\item $\SA_K(M,L_\infty(\mu,Y))$ is dense in $\Lipc(M,L_\infty(\mu,Y))$ for every $\sigma$-finite positive measure $\mu$.
		\end{enumerate}
	\end{cor}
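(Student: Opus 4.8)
The plan is to mirror the proof of Corollary \ref{codomaincompact} line by line, making throughout the substitution of ``$(M,\cdot)$ has the Lip-BPB property for Lipschitz compact maps'' by ``$\SA_K(M,\cdot)$ is dense in $\Lipc(M,\cdot)$'', of Proposition \ref{projections} by Proposition \ref{prop_Qlambda_density}, and of the auxiliary stability results by their density counterparts. The unifying mechanism in each of the three items is the same: one produces a net $\{Q_\lambda\}_{\lambda\in\Lambda}$ of norm-one projections on the target space that converges pointwise (in norm) to the identity and whose ranges $Q_\lambda(\cdot)$ are isometrically isomorphic to a well-understood absolute sum of copies of $Y$; once the density of $\SA_K(M,Q_\lambda(\cdot))$ in $\Lipc(M,Q_\lambda(\cdot))$ is checked for each $\lambda$, Proposition \ref{prop_Qlambda_density} glues these pieces together to yield $\overline{\SA_K(M,\cdot)}=\Lipc(M,\cdot)$.

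For item (1) I would, following Johnson's Theorem~4 in \cite{Johnson} and the argument in the proof of Theorem~3.15 of \cite{dgmm}, use peak partitions of the unit to build a net $\{Q_\lambda\}$ of norm-one projections on $C(K,Y)$ converging pointwise to the identity, with each range $Q_\lambda(C(K,Y))$ isometric to $\ell_\infty^m(Y)$. The density of $\SA_K(M,\ell_\infty^m(Y))$ then follows from the density of $\SA_K(M,Y)$ via the density analogue of Proposition \ref{prop6.15} (its Lipschitz-compact, finite $\ell_\infty$-sum version), and Proposition \ref{prop_Qlambda_density} finishes the argument.

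For items (2) and (3) I would split according to the dimension of the relevant scalar space. In (2), if $L_1(\mu)$ is finite-dimensional then $L_p(\mu,Y)$ is isometric to a space $\ell_p^n(Y)$ which is an absolute summand of $\ell_p(Y)$, so the density part of Proposition \ref{recdomain} applies directly; if $L_1(\mu)$ is infinite-dimensional, Lemma~3.12 in \cite{dgmm} supplies a net of norm-one projections on $L_p(\mu,Y)$ with ranges isometric to $\ell_p(Y)$, and the standing hypothesis that $\SA_K(M,\ell_p(Y))$ is dense feeds Proposition \ref{prop_Qlambda_density}. In (3) one disposes of the finite-dimensional case of $L_\infty(\mu)$ by the density analogue of Proposition \ref{prop6.15}, and in the infinite-dimensional case one first reduces to a finite measure by Proposition~1.6.1 in \cite{cm}, then invokes Lemma~3.12 in \cite{dgmm} to obtain projections whose ranges are isometric to $\ell_\infty(Y)$, and concludes as before by combining the $\ell_\infty$-sum density result with Proposition \ref{prop_Qlambda_density}.

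The only genuinely substantive point --- and hence the step I expect to be the main obstacle --- is verifying the hypothesis of Proposition \ref{prop_Qlambda_density}, namely that the density of $\SA_K(M,Y)$ really does transfer to the ranges of the projections ($\ell_\infty^m(Y)$, $\ell_p(Y)$, or $\ell_\infty(Y)$, as the case may be). All the weight of the argument rests on the density counterparts of the absolute-sum stability results (the $\ell_\infty$-sum statement behind Proposition \ref{prop6.15} and the absolute-summand statement of Proposition \ref{recdomain}); once those are available, the projection-limit machinery of Proposition \ref{prop_Qlambda_density}, together with the fact that the approximation produced there preserves Lipschitz compactness, does the rest automatically, exactly as in Corollary \ref{codomaincompact}.
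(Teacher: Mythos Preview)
Your proposal is correct and follows essentially the same approach as the paper's proof: in each item one builds the same net of norm-one projections (via Johnson's peak-partition argument or Lemma~3.12 of \cite{dgmm}, after reducing to a finite measure in (3) via \cite{cm}), checks density on the ranges using the $\ell_\infty$-sum and absolute-summand stability results for $\SA_K$, and then applies Proposition~\ref{prop_Qlambda_density}. The ``density analogue of Proposition~\ref{prop6.15}'' that you invoke is precisely Proposition~\ref{prop6.16}, and the ``density part of Proposition~\ref{recdomain}'' you need in the finite-dimensional subcase of (2) is Proposition~\ref{absteo} (in its Lipschitz-compact form), so the auxiliary ingredients you flag as the main obstacle are already available in the paper.
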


\begin{proof}
	We proceed as in the proof of Corollary \ref{codomaincompact}. To prove (1), Theorem 4 in \cite{Johnson} shows that we can find a net $\{Q_\lambda\}_{\lambda \in \Lambda}$ of norm-one projections on $C(K,Y)$ such that $\{Q_\lambda(f)\} \longrightarrow f$ in norm for every $f \in C(K,Y)$ and $Q_\lambda(C(K,Y))$ is isometrically isomorphic to $\ell_p(Y)$. Consequently, we can apply Propositions \ref{prop6.16} and \ref{prop_Qlambda_density} to obtain the result.
	
	In order to prove (2), fix $1\leq p<\infty$. If $\operatorname{L}_1(\mu)$ is finite-dimensional, the result is a consequence of Proposition \ref{absteo}. Otherwise, using Lemma 3.12 in \cite{dgmm} we find a net $\{Q_\lambda\}_{\lambda \in \Lambda}$ of norm-one projections on $L_p(\mu,Y)$ such that $\{Q_\lambda\}\longrightarrow f$ in norm for every $f \in L_p(\mu,Y)$ and $Q_\lambda(L_p(\mu,Y))$ is isometrically isomorphic to $\ell_p(Y)$. Consequently, we can apply Proposition \ref{prop_Qlambda_density}.
	
	Finally, if $L_\infty (\mu)$ is finite-dimensional, the result follows from Proposition \ref{prop6.16}. If $L_\infty(\mu)$ is infinite-dimensional, we may suppose that the measure is finite by using Proposition 1.6.1 in \cite{cm}. Then, Lemma 3.12 in \cite{dgmm} provides a net $\{Q_\lambda\}_{\lambda \in \Lambda}$ of norm-one projections on $L_\infty(\mu,Y)$ such that $\{Q_\lambda\}\longrightarrow f$ in norm for every $f \in L_\infty(\mu,Y)$ and $Q_\lambda(L_p(\mu,Y))$ is isometrically isomorphic to $\ell_\infty(Y)$. Consequently, we can apply Propositions \ref{prop6.16} and \ref{prop_Qlambda_density} to get the result.\qedhere
\end{proof}

\section{Absolute sums of codomains}
In this last section we study the behavior of the Lip-BPB property and the density of $\SA(M,Y)$ with respect to absolute sums of the codomain. We need some definitions.
		
	\begin{defi}\label{abssum}
		An \textit{absolute norm} is a norm $|\cdot|_a$ in $\mathbb{R}^2$ such that $$|(1,0)|_a=|(0,1)|_a=1 \quad \text{ and }\quad |(s,t)|_a=|(|s|,|t|)|_a \text{ for every $s,t \in \mathbb{R}$.}
$$
		Given two Banach spaces $W$ and $Z$ and an absolute norm $|\cdot|_a$, the \textit{absolute sum} of $W$ and $Z$ with respect to $|\cdot|_a$, denoted by $W \oplus_a Z$, is the Banach space $W\times Z$ endowed with the norm
		\[ \|(w,z)\|_a=|(\|w\|,\|z\|)|_a\quad \forall \, w \in W,\quad \forall \, z \in Z. \]
A closed subspace $Y_1$ of a Banach space $Y$ is said to be an \emph{absolute summand} of $Y$ whenever there exists a closed subspace $Z$ of $Y$ and an absolute norm $|\cdot|_a$ in $\mathbb{R}^2$ such that $Y\equiv Y_1\oplus_a Z$.
	\end{defi}	
		
	 We will need the next easy lemma (for a proof, see Lemma 2.2 in \cite{garcia-pacheco}, for instance).

	\begin{lem}
		Let $W$ and $Z$ be Banach spaces and $|\cdot|_a$ be any absolute norm in $\mathbb{R}^2$. If $(w,z) \in S_{W \oplus_a Z}$ and $(w^*,z^*) \in S_{W^* \oplus_{a^*} Z^*}$ are such that $\langle (w,z),(w^*,z^*)\rangle=1$, then
		\[w^*(w)=\|w^*\|\|w\|\quad \mbox{ and } \quad z^*(z)=\|z^*\|\|z\|. \]
	\end{lem}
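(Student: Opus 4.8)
The plan is to run a sandwich argument based on two sources of inequalities: the elementary bounds $w^*(w)\le\|w^*\|\,\|w\|$ and $z^*(z)\le\|z^*\|\,\|z\|$ coming from the duality in $W$ and $Z$ separately, and a H\"older-type inequality in $\mathbb{R}^2$ coming from the duality between the absolute norm $|\cdot|_a$ and its dual $|\cdot|_{a^*}$. First I would recall the standard fact that the dual of $W\oplus_a Z$ is isometrically $W^*\oplus_{a^*}Z^*$, where $|\cdot|_{a^*}$ is the dual absolute norm on $\mathbb{R}^2$ (which is again an absolute norm), and that under this identification the pairing is $\langle(w,z),(w^*,z^*)\rangle=w^*(w)+z^*(z)$. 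This is precisely the framework encoded in Definition \ref{abssum}, and it guarantees that the quantities appearing in the hypothesis make sense.

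With this in place, the heart of the argument is the chain
\[
1=w^*(w)+z^*(z)\le \|w^*\|\,\|w\|+\|z^*\|\,\|z\|\le \bigl|(\|w\|,\|z\|)\bigr|_a\,\bigl|(\|w^*\|,\|z^*\|)\bigr|_{a^*}=1.
\]
Here the leftmost equality is the hypothesis $\langle(w,z),(w^*,z^*)\rangle=1$; the first inequality is the coordinatewise bound $w^*(w)\le|w^*(w)|\le\|w^*\|\,\|w\|$ (and likewise for $z$); the second inequality is the Cauchy--Schwarz/H\"older inequality $\langle x,y\rangle\le |x|_a\,|y|_{a^*}$ for the dual pair of norms applied to the nonnegative vectors $x=(\|w\|,\|z\|)$ and $y=(\|w^*\|,\|z^*\|)$; and the final equality uses the normalizations $\|(w,z)\|_a=1$ and $\|(w^*,z^*)\|_{a^*}=1$. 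Since both ends of the chain equal $1$, every inequality is in fact an equality. In particular the first one forces $w^*(w)+z^*(z)=\|w^*\|\,\|w\|+\|z^*\|\,\|z\|$, and combining this with the individual bounds $w^*(w)\le\|w^*\|\,\|w\|$ and $z^*(z)\le\|z^*\|\,\|z\|$ yields the desired equalities $w^*(w)=\|w^*\|\,\|w\|$ and $z^*(z)=\|z^*\|\,\|z\|$.

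The only nonroutine input, and thus the main obstacle, is the pair of facts that the dual of an absolute norm is again an absolute norm and that $(W\oplus_a Z)^*\equiv W^*\oplus_{a^*}Z^*$ isometrically, since these are what legitimize writing $\|(w^*,z^*)\|_{a^*}=|(\|w^*\|,\|z^*\|)|_{a^*}$ and applying the H\"older inequality in the middle step. These are, however, classical and well-documented properties of absolute sums, so once they are invoked the remainder of the proof is the two-line squeeze above and requires no further computation.
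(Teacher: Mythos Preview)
Your proof is correct and is precisely the standard sandwich argument for this fact. The paper itself does not give a proof of this lemma but simply cites \cite{garcia-pacheco} (Lemma~2.2 there), so there is no ``paper's own proof'' to compare against; your argument is essentially the one found in that reference and is the natural one.
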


Our first result is following lifting of the Lip-BPB property from a space to its absolute summands.

	\begin{prop}\label{recdomain}
		Let $M$ be a pointed metric space, $Y$ be a Banach space and $Y_1$ be an absolute summand of $Y$. If the pair $(M,Y)$ has the Lip-BPB property with a function $\eps\longmapsto \eta(\eps)$, then so does $(M,Y_1)$ with the same function.
	\end{prop}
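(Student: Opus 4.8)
The plan is to deduce the property for $Y_1$ from the hypothesis for $Y$ by pushing forward along the isometric inclusion and pulling back along the canonical norm-one projection of the absolute decomposition $Y\equiv Y_1\oplus_a Z$. Write $\iota\colon Y_1\to Y$, $\iota(y)=(y,0)$, and $P\colon Y\to Y_1$, $Q\colon Y\to Z$ for the coordinate projections. Since $|\cdot|_a$ is absolute (hence monotone) with $|(1,0)|_a=1$, the map $\iota$ is an isometry, $\|P\|=\|Q\|=1$, and $P\iota=\Id_{Y_1}$. First I would fix $\eps>0$, let $\eta(\eps)$ be the function witnessing the Lip-BPB property of $(M,Y)$, and take a norm-one $\widehat F_1\in\operatorname{L}(\mathcal F(M),Y_1)$ together with $m\in\Mol(M)$ such that $\|\widehat F_1(m)\|>1-\eta(\eps)$. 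Setting $\widehat F=\iota\widehat F_1$, the isometry of $\iota$ gives $\|F\|_L=1$ and $\|\widehat F(m)\|=\|\widehat F_1(m)\|>1-\eta(\eps)$, so the hypothesis for $(M,Y)$ furnishes $\widehat G\in\operatorname{L}(\mathcal F(M),Y)$ and $u\in\Mol(M)$ with $\|\widehat G(u)\|=\|G\|_L=1$, $\|\widehat F-\widehat G\|<\eps$ and $\|m-u\|<\eps$.

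The candidate on the $Y_1$ side is $\widehat G_1:=P\widehat G$. Using $P\iota=\Id_{Y_1}$ and $\|P\|=1$ one immediately gets $\|G_1\|_L\le 1$ and $\|\widehat F_1-\widehat G_1\|=\|P(\widehat F-\widehat G)\|<\eps$, while $\|m-u\|<\eps$ is already in hand. Writing $\widehat G(u)=(a,b)$ with $a=\widehat G_1(u)$ and $b=Q\widehat G(u)$, the estimate $\|Q(\widehat F-\widehat G)\|<\eps$ together with $Q\widehat F=0$ forces $\|b\|<\eps$, and from $1=|(\|a\|,\|b\|)|_a\le \|a\|+\|b\|$ I get the crucial second-order bound $1-\|a\|\le\|b\|<\eps$. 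Thus $\widehat G_1$ \emph{almost} attains its norm at $u$, and all three target estimates are satisfied except for exact norm attainment.

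The point that is not automatic, and which I expect to be the main obstacle, is that $\widehat G_1$ must \emph{strongly attain} its norm exactly at $u$ (with $\|G_1\|_L=1$): projecting an operator that attains its norm at $u$ need not produce an operator attaining its norm at $u$. When $|\cdot|_a$ is of sup type this difficulty evaporates, since $\|b\|<\eps$ forces $\|a\|=1$ and then $\|\widehat G_1(u)\|=1=\|G_1\|_L$; but for strictly monotone sums (e.g.\ $\ell_p$, $1<p<\infty$) one typically has $\|a\|<1$ and $\widehat G_1$ may even fail to attain at any molecule. My plan to repair this is a rank-one correction in the direction $a$: choosing a norming functional $u^*\in S_{\mathcal F(M)^*}$ for $u$ (say $u^*=\widehat g$ with $g\in\Lip(M,\mathbb R)$, $\|g\|_L=1$, $\widehat g(u)=1$) and setting $\widehat S(x)=\widehat G_1(x)+(1-\|a\|)\,u^*(x)\,\tfrac{a}{\|a\|}$, so that $\widehat S(u)=a/\|a\|\in S_{Y_1}$ and hence $\widehat S$ attains the value $1$ at $u$. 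The serious work is then to verify $\|S\|_L\le 1$ and to keep the total approximation below $\eps$: a naive triangle estimate only yields $\|\widehat F_1-\widehat S\|\le \|\widehat F_1-\widehat G_1\|+(1-\|a\|)<2\eps$, and this doubling is forbidden by the ``same function $\eta$'' clause.

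This is precisely where I would bring in the alignment Lemma stated just above: applied to $(a,b)\in S_{Y}$ and a functional $(a^*,b^*)\in S_{Y^*}$ realizing $\|(a,b)\|_a=1$, it gives $a^*(a)=\|a^*\|\|a\|$ and $b^*(b)=\|b^*\|\|b\|$, which pins down how the $Y_1$- and $Z$-parts interact. My expectation is that, by taking the correcting functional $u^*$ compatible with $\widehat G^*(a^*,b^*)$ and by measuring the norm increase of the perturbation against the slack $1-\|a\|\le\|b\|$ rather than crudely, the excess created by the rank-one term can be absorbed into the room already present in the projection error, so that $\|S\|_L\le 1$ and $\|\widehat F_1-\widehat S\|<\eps$ simultaneously. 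Granting this reconciliation, $\widehat S$ and $u$ satisfy $\|\widehat S(u)\|=\|S\|_L=1$, $\|\widehat F_1-\widehat S\|<\eps$ and $\|m-u\|<\eps$, proving that $(M,Y_1)$ has the Lip-BPB property with the same $\eta$. The technical heart — and the step I would spend the most care on — is exactly this no-loss control of the correction needed for exact attainment.
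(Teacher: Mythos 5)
Your overall strategy --- push $\widehat F_1$ forward as $\widehat F=\iota\widehat F_1$, apply the Lip-BPB property of $(M,Y)$ to get $\widehat G$ and $u$, and then repair $P\widehat G$ so that it attains its norm exactly at $u$ --- is the same as the paper's, which performs the repair by following the proof of Theorem~2.1 of \cite{absolutesums}. You correctly isolate the crux (exact attainment after projecting) and your preliminary estimates ($\|b\|<\eps$, $1-\|a\|\le\|b\|$, $\|a^*\|$-alignment via the lemma) are all right. But the proof is not complete: the decisive step is left as an ``expectation'', and the specific perturbation you propose does not work. For $\widehat S(x)=\widehat G_1(x)+(1-\|a\|)\,u^*(x)\,a/\|a\|$ with $u^*\in S_{\mathcal F(M)^*}$ norming $u$, nothing prevents a point $x\in B_{\mathcal F(M)}$ with $u^*(x)$ close to $1$ and $\widehat G_1(x)$ close to $a/\|a\|$, in which case $\|\widehat S(x)\|$ is close to $2-\|a\|>1$; this persists even for the choice $u^*=\widehat G^*(a^*,b^*)$, since the best one gets is $\|\widehat S(x)\|\le\|\widehat G_1(x)\|+(1-\|a\|)$. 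An additive rank-one correction driven by a functional on the \emph{domain} has no mechanism to keep the norm at most $1$, and the alignment lemma alone cannot supply one.

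The construction that works (the one the paper imports from \cite[Theorem~2.1]{absolutesums}) differs in two essential ways: it \emph{contracts} the $Y_1$-part, and the correction is driven by the $Z$-part of $\widehat G$ rather than by a norming functional of $u$. Take $(a^*,b^*)\in S_{Y_1^*\oplus_{a^*}Z^*}$ with $\langle \widehat G(u),(a^*,b^*)\rangle=1$; the lemma gives $a^*(a)=\|a^*\|\|a\|$ and $b^*(b)=\|b^*\|\|b\|$, and $\|b\|<\eps$ forces $\|a\|>1-\eps$ and $\|a^*\|>1-\eps$. Set
\[
\widehat S(x)=\|a^*\|\,P\widehat G(x)+b^*\bigl(Q\widehat G(x)\bigr)\,\frac{a}{\|a\|}\qquad (x\in\mathcal F(M)).
\]
Then $\|\widehat S(x)\|\le\|a^*\|\,\|P\widehat G(x)\|+\|b^*\|\,\|Q\widehat G(x)\|\le\|\widehat G(x)\|\le 1$ by the duality between $|\cdot|_a$ and $|\cdot|_{a^*}$, while $\widehat S(u)=\bigl(\|a^*\|\|a\|+\|b^*\|\|b\|\bigr)a/\|a\|=a/\|a\|$, so $\|\widehat S(u)\|=\|S\|_L=1$. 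The contraction factor $\|a^*\|$ is exactly what your additive term is missing. Finally, the distance estimate for $\|\widehat S-\widehat F_1\|$ (using $Q\widehat F=0$ and $1-\|a^*\|\le\|b^*\|\|b\|$) is where the claim of keeping the \emph{same} function $\eta$ has to be settled; that bookkeeping is also absent from your sketch.
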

	
Let us comment that the case of $\oplus_a = \oplus_\infty$ already appeared in \cite[Lemma~4.8]{ChiMar-NA}.

	\begin{proof}
		Fix $\varepsilon >0$ and consider $\widehat{F}_1 \in \operatorname{L}(\mathcal{F}(M),Y_1)$ with $\|F_1\|_L=1$ and $m\in\Mol(M)\subset S_{\F(M)}$ satisfying that
$$\|\widehat{F}_1(x_0)\|>1-\eta(\varepsilon).
$$
Let us define the operator
$\widetilde{T}\in \operatorname{L}(\mathcal{F}(M),Y_1)$ by $\widetilde{T}(x)=(\widehat{F}_1(x),0)$ for all $x\in \mathcal{F}(M)$, and note that it satisfies that $\|\widetilde{T}\|=1$ and
$$
\|\widetilde{T}(m)\|=\|(\widehat{F}_1(m),0)\|_a=\|\widehat{F}_1(m)\|>1-\eta(\eps).
$$
Now, following the proof of Theorem 2.1 in \cite{absolutesums} by using the Lip-BPB property of $(M,Y)$ instead of the BPBp of $(\mathcal{F}(M),Y)$, we may
construct $\widehat{G}_1 \in \operatorname{L}(\mathcal{F}(M),Y_1)$ and $m' \in \Mol(M)$ such that
		\[ \|\widehat{G}_1(m')\|=\|G_1\|_L=1,\qquad \|F_1-G_1\|_L< \varepsilon, \qquad \|m-m'\|< \varepsilon.\]
But this implies that $(M,Y_1)$ has the Lip-BPB property, as desired.
	\end{proof}
	
	Note that, as it is proved in the above proposition, the same function $\eta$ from the Lip-BPB property of $(M,Y)$ works for the Lip-BPB property of $(M,Y_1)$. This is the key fact to obtain the following consequence.
	
\begin{cor}
		Let $M$ be a pointed metric space such that $(M,Y)$ has the Lip-BPB property for all Banach spaces $Y$. Then, there exists a function $\eta_M(\varepsilon)$, which only depends on $M$, such that the pair $(M,Y)$ has the Lip-BPB property witnessed by the function $\eta_M(\varepsilon)$ for every Banach space $Y$.
	\end{cor}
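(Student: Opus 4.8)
The plan is to collect the worst-behaved spaces into a single $\ell_\infty$-sum and then exploit the feature, stressed in the remark after Proposition~\ref{recdomain}, that passing to an absolute summand preserves the \emph{same} witnessing function. For each Banach space $Y$ and each $\eps>0$, write $\eta_Y(\eps)$ for the supremum of all positive constants that witness the Lip-BPB property of $(M,Y)$ at $\eps$; this is positive by hypothesis, and any value below it is again admissible since decreasing $\eta$ only strengthens the requirement $\|F(p)-F(q)\|>(1-\eta)d(p,q)$ on the pair. As $\{\eta_Y(\eps)\colon Y\}$ is a set of reals bounded below by $0$, the quantity $\eta_M(\eps):=\inf\{\eta_Y(\eps)\colon Y \text{ Banach space}\}$ is well defined, and the whole statement reduces to showing $\eta_M(\eps)>0$ for every $\eps>0$.

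I would prove this positivity by contradiction. If $\eta_M(\eps_0)=0$ for some $\eps_0>0$, then I may choose, for each $n\in\N$, a Banach space $Y_n$ with $\eta_{Y_n}(\eps_0)<1/n$, and form the Banach space $Y=\left[\bigoplus_{n\in\N}Y_n\right]_{\ell_\infty}$. For each fixed $k$ there is an isometric identification $Y=Y_k\oplus_\infty\left[\bigoplus_{n\neq k}Y_n\right]_{\ell_\infty}$, and since the maximum norm on $\R^2$ is an absolute norm, $Y_k$ is an absolute summand of $Y$ in the sense of Definition~\ref{abssum}. By hypothesis $(M,Y)$ has the Lip-BPB property with some function $\eps\mapsto\eta(\eps)$, so $\eta(\eps_0)>0$; applying Proposition~\ref{recdomain} to the absolute summand $Y_k$ shows that $(M,Y_k)$ has the Lip-BPB property \emph{with this same function}, whence $\eta_{Y_k}(\eps_0)\geq\eta(\eps_0)>0$ for all $k$. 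This contradicts $\eta_{Y_k}(\eps_0)<1/k\to0$, so $\eta_M(\eps)>0$ for every $\eps>0$. Replacing $\eta_M$ by $\tfrac12\eta_M$ if needed to guarantee a strict inequality $\eta_M(\eps)<\eta_Y(\eps)$, the bound $\eta_M(\eps)\leq\eta_Y(\eps)$ makes $\eta_M(\eps)$ an admissible witnessing constant for every $Y$, which is exactly what is claimed.

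The only genuinely substantial ingredient is the one already built into Proposition~\ref{recdomain}: that an absolute summand inherits the \emph{identical} function $\eta$, not merely some positive function depending on the summand. Everything else is routine --- in particular the ostensible set-theoretic difficulty that ``all Banach spaces'' form a proper class disappears, because $\eta_M(\eps)$ is the infimum of a genuine set of real numbers, and selecting the sequence $\{Y_n\}$ only requires choosing one space from a nonempty class for each $n$.
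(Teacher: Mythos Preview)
Your proof is correct and follows essentially the same route as the paper's: argue by contradiction, select a sequence of ``bad'' spaces $Y_n$, bundle them into a sum, and use Proposition~\ref{recdomain} to push the single witnessing function of the sum back to every summand. The only cosmetic differences are that the paper uses the $c_0$-sum rather than the $\ell_\infty$-sum (both work, as each $Y_k$ is an absolute summand either way) and that you are more careful than the paper about the set-theoretic point and about passing from $\eta_M$ to $\tfrac12\eta_M$ to ensure strict admissibility.
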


\begin{proof}
Suppose this is not the case. Then there is a sequence $Y_n$ of Banach spaces such that whenever each pair $(M,Y_n)$ has the Lip-BPB property witnessed by a function $\eta_n(\varepsilon)>0$, one has that $\inf_n \eta_n(\varepsilon)=0$ for every $0<\varepsilon<1$. Then, consider the space $Y=\left[\bigoplus_{n \in \N} Y_n\right]_{c_0}$ and observe that, by hypothesis, the pair $(M,Y)$ has the Lip-BPB property witnessed by a function $\varepsilon\longmapsto \eta(\varepsilon)>0$. As each $Y_n$ is clearly an absolute summand of $Y$, it follows by Proposition \ref{recdomain} that for every $n\in \N$, each pair $(M,Y_n)$ has the Lip-BPB property witnessed by the function $\varepsilon\longmapsto \eta(\varepsilon)>0$, a contradiction to our assumption.
\end{proof}

For the density of $\SA(M,Y)$, we can give the following result whose proof is a modification of \cite[Proposition 2.5]{absolutesums}, in the same way as the proof of Proposition \ref{recdomain} follows from \cite[Theorem 2.1]{absolutesums}.
	
\begin{prop}\label{absteo}
		Let $M$ be a pointed metric space, let $Y$ be a Banach space, and let $Y_1$ be an absolute summand of $Y$. If $\SA(M,Y)$ is dense in $\Lip(M,Y)$, then $\SA(M,Y_1)$ is dense in $\Lip(M,Y_1)$.
	\end{prop}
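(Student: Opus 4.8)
The plan is to mirror the proof of Proposition~\ref{recdomain}, replacing the use of the Lip-BPB property of $(M,Y)$ by the mere density of $\SA(M,Y)$, and accordingly replacing the appeal to \cite[Theorem~2.1]{absolutesums} by the corresponding density statement \cite[Proposition~2.5]{absolutesums}. Write $Y=Y_1\oplus_a Z$ for the absolute norm $|\cdot|_a$ and the closed subspace $Z$ witnessing that $Y_1$ is an absolute summand, and let $\pi_1\colon Y\longrightarrow Y_1$ and $\pi_2\colon Y\longrightarrow Z$ be the canonical norm-one projections. Fix $F_1\in\Lip(M,Y_1)$, which we may assume to satisfy $\|F_1\|_L=1$, and fix $\eps>0$. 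Since $|(s,0)|_a=|s|$, the map $\widetilde F\colon M\longrightarrow Y$ given by $\widetilde F(p)=(F_1(p),0)$ has $\|\widetilde F\|_L=\|F_1\|_L=1$, so $\widetilde F$ is a genuine element of $\Lip(M,Y)$ into which $F_1$ embeds isometrically.

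First I would invoke the density of $\SA(M,Y)$ to choose $G\in\SA(M,Y)$ with $\|G-\widetilde F\|_L$ as small as needed; let $u=m_{p,q}\in\Mol(M)$ be a molecule at which $\widehat G$ strongly attains its norm, and write $\widehat G(u)=(a,b)\in Y_1\oplus_a Z$. Because the second coordinate of $\widetilde F$ vanishes, the operator $\pi_2\circ\widehat G$ has norm at most $\|G-\widetilde F\|_L$, so $\|b\|$ is small and, by monotonicity of the absolute norm, $\|a\|$ lies within $\|G-\widetilde F\|_L$ of $\|\widehat G\|=\|G\|_L$, which is itself close to $1$. From here I would follow verbatim the construction in \cite[Proposition~2.5]{absolutesums}: applying the lemma above to $\widehat G(u)=(a,b)$ together with a functional $(y_1^*,z^*)\in S_{Y^*}$ norming it (so that $y_1^*(a)=\|y_1^*\|\,\|a\|$), one produces an operator $\widehat{G_1}\in\operatorname{L}(\F(M),Y_1)$ with $\|F_1-G_1\|_L<\eps$ that strongly attains its norm exactly at the molecule $u$. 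Since the attainment point is the molecule $u$, the resulting $G_1$ lies in $\SA(M,Y_1)$, and letting $F_1$ and $\eps$ vary yields the density of $\SA(M,Y_1)$ in $\Lip(M,Y_1)$.

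The main obstacle is the construction of $\widehat{G_1}$ in the last step, and specifically the passage from \emph{almost} attaining to \emph{exactly} attaining the norm. The naive candidate $\pi_1\circ\widehat G$ only almost attains its norm at $u$: monotonicity of $|\cdot|_a$ gives $\|\pi_1\widehat G\|\le\|\widehat G\|$ and $\|\pi_1\widehat G(u)\|=\|a\|\ge\|\widehat G\|-\|b\|$, so the gap between its norm and $\|a\|$ is controlled but need not vanish, and $\SA(M,Y_1)$ is not closed, so this does not suffice on its own. The role of \cite[Proposition~2.5]{absolutesums} is precisely to correct this defect, using the norming functional $(y_1^*,z^*)$ and the lemma above to realise the first coordinate as \emph{exactly} norming, while keeping the attainment point equal to the molecule $u$; the latter is what guarantees that we stay inside the class of strongly norm attaining Lipschitz maps, rather than merely inside the operators attaining their norm at some point of $B_{\F(M)}$. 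Verifying that this correction simultaneously preserves the $\eps$-closeness to $F_1$ and the molecular nature of the attainment point is the only nontrivial bookkeeping, and it runs parallel to the analogous verification already carried out in Proposition~\ref{recdomain}.
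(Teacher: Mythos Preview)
Your proposal is correct and follows essentially the same approach as the paper: the paper gives no detailed argument for Proposition~\ref{absteo} either, merely stating that the proof is ``a modification of \cite[Proposition~2.5]{absolutesums}, in the same way as the proof of Proposition~\ref{recdomain} follows from \cite[Theorem~2.1]{absolutesums}.'' Your sketch spells out precisely this modification---embed $F_1$ isometrically into $\Lip(M,Y)$, approximate by $G\in\SA(M,Y)$, and then run the construction of \cite[Proposition~2.5]{absolutesums} while checking that the attainment point remains the molecule $u$---and you correctly identify that the only genuine content beyond the cited result is verifying that the corrected operator still attains at a \emph{molecule} rather than an arbitrary point of $B_{\mathcal F(M)}$.
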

	
Another result in the same direction is the following modification of Proposition 2.8 in \cite{acklm}.

\begin{prop}\label{C(K)}
	Let $M$ be a pointed metric space, $Y$ be a Banach space, and $K$ be a compact Hausdorff topological space. If $(M,C(K,Y))$ has the Lip-BPB property witnessed by a function $\eta(\varepsilon)$, then $(M,Y)$ has the Lip-BPB property witnessed by the same function.
\end{prop}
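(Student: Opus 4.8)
The plan is to realize $Y$ as an isometric, norm-one complemented subspace of $C(K,Y)$ via the constant functions, transfer the hypothesis through this embedding, and then come back to $Y$ by composing with a carefully chosen evaluation. Concretely, let $\iota\colon Y\longrightarrow C(K,Y)$ send each $y\in Y$ to the constant function with value $y$; this $\iota$ is a linear isometry. For each $t\in K$ let $E_t\colon C(K,Y)\longrightarrow Y$ be the evaluation $E_t(f)=f(t)$, which is a norm-one linear operator satisfying $E_t\circ\iota=\operatorname{Id}_Y$ for every $t\in K$. The idea is to lift a given $Y$-valued map to $C(K,Y)$ through $\iota$, apply the Lip-BPB property there, and then project the resulting map back to $Y$ with the right $E_{t_0}$.

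First I would start from a norm-one $\widehat F\in \operatorname{L}(\F(M),Y)$ and a molecule $m\in\Mol(M)$ with $\|\widehat F(m)\|>1-\eta(\varepsilon)$, and set $\widehat{\tilde F}=\iota\circ\widehat F$, the linearization of the Lipschitz map $\tilde F=\iota\circ F\in\Lip(M,C(K,Y))$. Since $\iota$ is isometric, $\|\tilde F\|_L=\|F\|_L=1$ and $\|\widehat{\tilde F}(m)\|=\|\widehat F(m)\|>1-\eta(\varepsilon)$, so $\tilde F$ satisfies the hypothesis of the Lip-BPB property of $(M,C(K,Y))$ \emph{for exactly the same gap}. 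Applying that property, witnessed by $\eta$, produces $\widehat{\tilde G}\in \operatorname{L}(\F(M),C(K,Y))$ and a molecule $u=m_{r,s}\in\Mol(M)$ with
\[
\|\widehat{\tilde G}(u)\|=\|\tilde G\|_L=1,\qquad \|\tilde F-\tilde G\|_L<\varepsilon,\qquad \|m-u\|<\varepsilon.
\]

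The key step, and the only delicate point, is the choice of the evaluation used to descend to $Y$. The element $\tilde G(r)-\tilde G(s)$ lies in $C(K,Y)$, so the continuous function $t\longmapsto \|(\tilde G(r)-\tilde G(s))(t)\|_Y$ attains its maximum at some $t_0\in K$ by compactness of $K$, and this maximum equals $\|\tilde G(r)-\tilde G(s)\|_{C(K,Y)}=d(r,s)$ because $\|\widehat{\tilde G}(m_{r,s})\|=1$. I would then define $G=E_{t_0}\circ\tilde G\in\Lip(M,Y)$, which vanishes at the base point since $\tilde G$ does. As $\|E_{t_0}\|=1$, we get $\|G\|_L\le\|\tilde G\|_L=1$; on the other hand $\|G(r)-G(s)\|_Y=\|(\tilde G(r)-\tilde G(s))(t_0)\|_Y=d(r,s)$ by the choice of $t_0$, so $\|G\|_L=1$ and $G$ strongly attains its norm precisely at $u=m_{r,s}$. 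Finally $E_{t_0}\circ\iota=\operatorname{Id}_Y$ yields $E_{t_0}\circ\tilde F=F$, hence $\|F-G\|_L=\|E_{t_0}\circ(\tilde F-\tilde G)\|_L\le\|\tilde F-\tilde G\|_L<\varepsilon$, while $\|m-u\|<\varepsilon$ is inherited unchanged since the molecules live in $\F(M)$, independently of the codomain. Thus $G$ and $u$ witness the Lip-BPB property of $(M,Y)$ with the same function $\eta$.

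I expect the main obstacle to be conceptual rather than computational: one must not fix the evaluation point $t_0$ before applying the property, but select it \emph{after} the approximating map $\tilde G$ and its norm-attaining molecule $u=m_{r,s}$ are available, so that $t_0$ witnesses the maximum of the single difference $\tilde G(r)-\tilde G(s)$; a pre-chosen $t_0$ would in general fail to make $G$ attain its norm. Everything else is a routine transcription of the operator-theoretic argument of \cite[Proposition~2.8]{acklm} into the language of molecules and Lipschitz maps, with the isometry of $\iota$ accounting for the preservation of the function $\eta$.
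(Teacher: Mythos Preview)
Your proof is correct and follows essentially the same route as the paper: embed $Y$ into $C(K,Y)$ via constant functions, apply the Lip-BPB property of $(M,C(K,Y))$, and then evaluate at a point $t_0\in K$ (chosen by compactness) where $\|[\widehat{\tilde G}(u)](t)\|_Y$ attains its maximum. The only difference is notational---you make the maps $\iota$ and $E_{t_0}$ explicit and add a helpful remark on why $t_0$ must be selected after $\tilde G$ and $u$ are produced.
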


\begin{proof}
	Fix $\varepsilon>0$ and take $\eta(\varepsilon)$ the constant from the Lip-BPB property of $(M,C(K,Y))$.  Consider $\widehat{F}_1 \in \operatorname{L}(\mathcal{F}(M),Y)$ with $\|\widehat{F}_1\|=1$ and $m \in \Mol(M)$ satisfying
	\[ \|\widehat{F}_1(m)\|>1-\eta(\varepsilon).\]
	Let us define $\widehat{F}\colon \mathcal{F}(M) \longrightarrow C(K,Y)$ given by $$[\widehat{F}(x)](t)=\widehat{F}_1(x) \quad \text{ for every $x \in \mathcal{F}(M)$, $t \in K$.}
$$
Then, it is clear that $\|\widehat{F}\|=\|\widehat{F}_1\|=1$. Furthermore, $\|\widehat{F}(m)\|>1-\eta(\varepsilon)$. By the assumption, there exist $\widehat{G} \in \operatorname{L}(\mathcal{F}(M),C(K,Y)) $ and $u \in \Mol(M)$ such that
	\[ \|\widehat{G}(u)\|=\|\widehat{G}\|=1, \quad \|\widehat{F}-\widehat{G}\|<\varepsilon, \quad \|m-u\|<\varepsilon.\]
	Moreover, since $K$ is compact, there is $t_1\in K$ such that $1=\|\widehat{G}(u)\|=\|[\widehat{G}(u)](t_1)\|$. Now, let us define $\widehat{G}_1\colon \F(M) \longrightarrow Y$ by $\widehat{G}_1(x)=[\widehat{G}(x)](t_1)$ for every $x \in \mathcal{F}(M)$. Note that
	\[ \|\widehat{G}_1\|=\sup_{x\in B_{\mathcal{F}(M)}} \bigl\|[\widehat{G}(x)](t_1)\bigr\| = \bigl\|[\widehat{G}(u)](t_1)\bigr\| = \bigl\|\widehat{G}_1(u)\bigr\|=1.\]
	In addition, we have that
	\begin{align*}
	\|G_1-F_1\|_L&=\sup_{x\in B_{\mathcal{F}(M)}} \left\{\bigl\|[\widehat{G}(x)](t_1)-[\widehat{F}(x)](t_1)\bigr\|\right\}\\
	&\leq \sup_{x \in B_{\mathcal{F}(M)}}\left\{\bigl\|\widehat{G}(x)-\widehat{F}(x)\bigr\|\right\}= \bigl\|\widehat{G}-\widehat{F}\bigr\|<\varepsilon.
	\end{align*}
	As we already know that $\|m-u\|<\varepsilon$, we obtain that $(M,Y)$ has the Lip-BPB property witnessed by the function $\eta(\varepsilon)$.
\end{proof}

The previous proposition also has an analogous formulation for the density of strongly norm-attaining Lipschitz maps.

	\begin{prop}
		Let $M$ be a pointed metric space, let $Y$ be a Banach space, and let $K$ be a compact Hausdorff topological space. Assume that $\SA(M,C(K,Y))$ is dense in $\Lip(M,C(K,Y))$. Then, $\SA(M,Y)$ is dense in $\Lip(M,Y)$.
	\end{prop}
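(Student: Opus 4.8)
The plan is to mirror the proof of Proposition~\ref{C(K)}, replacing the use of the Lip-BPB property by the density hypothesis and simply dropping the tracking of the gauge function $\eta$ and the control on the distance between molecules (exactly as Theorem~\ref{ACKden} is obtained from Theorem~\ref{teoACK}). First I would fix $F_1\in \Lip(M,Y)$ and $\varepsilon>0$, and aim to produce $G_1\in \SA(M,Y)$ with $\|F_1-G_1\|_L<\varepsilon$; after normalizing we may assume $\|F_1\|_L=1$, since scaling preserves membership in $\SA(M,Y)$. The key device is the diagonal (constant-function) embedding: define $\widehat{F}\colon \mathcal{F}(M)\longrightarrow C(K,Y)$ by $[\widehat{F}(x)](t)=\widehat{F}_1(x)$ for every $x\in \mathcal{F}(M)$ and every $t\in K$. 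Since $\|[\widehat{F}(x)](t)\|=\|\widehat{F}_1(x)\|$ does not depend on $t$, one checks at once that $\|\widehat{F}\|_L=\|\widehat{F}_1\|_L=1$.

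Next I would invoke the density of $\SA(M,C(K,Y))$ in $\Lip(M,C(K,Y))$ to obtain $\widehat{G}\in \operatorname{L}(\mathcal{F}(M),C(K,Y))$ with $\widehat{G}$ strongly norm attaining and $\|\widehat{F}-\widehat{G}\|<\varepsilon$. Let $u\in \Mol(M)$ be a molecule at which $\widehat{G}$ attains its norm, so that $\|\widehat{G}(u)\|=\|G\|_L$. Because $K$ is compact and $\widehat{G}(u)\in C(K,Y)$, the supremum defining $\|\widehat{G}(u)\|$ is a maximum, so there is $t_1\in K$ with $\|[\widehat{G}(u)](t_1)\|=\|\widehat{G}(u)\|=\|G\|_L$.

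Finally I would set $\widehat{G}_1(x)=[\widehat{G}(x)](t_1)$ for $x\in \mathcal{F}(M)$ and verify it does the job. On one hand, evaluation at the fixed point $t_1$ cannot increase norms, so $\|G_1\|_L\leq \|G\|_L$ and $\|G_1-F_1\|_L\leq \|\widehat{G}-\widehat{F}\|<\varepsilon$. On the other hand, $\|\widehat{G}_1(u)\|=\|[\widehat{G}(u)](t_1)\|=\|G\|_L\geq \|G_1\|_L$, which forces $\|\widehat{G}_1(u)\|=\|G_1\|_L$; hence $\widehat{G}_1$ strongly attains its norm at the molecule $u$, that is $G_1\in \SA(M,Y)$. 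This places $F_1$ in the norm-closure of $\SA(M,Y)$ and proves the density.

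I expect no serious obstacle here: the argument is a routine adaptation of Proposition~\ref{C(K)}. The one step that genuinely must be handled with care is the extraction of the evaluation point $t_1$, where the compactness of $K$ is essential — it is what upgrades the ``almost attaining'' of $\widehat{G}$ at some point of $K$ into an honest maximum, and hence guarantees that the restricted operator $\widehat{G}_1$ truly attains its norm rather than merely coming close.
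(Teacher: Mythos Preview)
Your proposal is correct and follows essentially the same approach as the paper's own proof: embed $F_1$ as a constant $C(K,Y)$-valued map, approximate by a strongly norm attaining $\widehat{G}$, use compactness of $K$ to pick a point $t_1$ where the norm of $\widehat{G}(u)$ is realised, and compose with evaluation at $t_1$. If anything, your write-up is slightly more careful, since you do not tacitly assume $\|G\|_L=1$ but work with the actual value of $\|G\|_L$ throughout.
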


	\begin{proof}
		Given $\varepsilon>0$, consider $\widehat{F}_1 \in \operatorname{L}(\mathcal{F}(M),Y)$ with $\|F_1\|_L=1$. Let us define $\widehat{F}$ as in the proof of Proposition \ref{C(K)}. By hypothesis, there exist $\widehat{G} \in \operatorname{L}(\mathcal{F}(M),C(K,Y))$ and $u \in \Mol(M)$ such that
		\[ \|\widehat{G}(u)\|=\|\widehat{G}\|=1 \mbox{ and } \quad \|\widehat{G}-\widehat{F}\|<\varepsilon.\]
		Since $K$ is compact, there is $t_1 \in K$ such that $1 =\|\widehat{G}(u)\|=\|[\widehat{G}(u)](t_1)\|$. Now, let us define the linear and bounded operator $\widehat{G}_1 \colon \mathcal{F}(M) \longrightarrow Y$ given by $\widehat{G}_1(x)=[\widehat{G}(x)](t_1)$ for every $x \in \mathcal{F}(M)$. By repeating the argument in Proposition \ref{C(K)}, we obtain that $\widehat{G}_1$ attains its norm at $u \in\Mol(M)$ and $\|\widehat{G}_1-\widehat{F}_1\|\leq \|\widehat{G}-\widehat{F}\|<\varepsilon$. Consequently, we obtain that $\SA(M,Y)$ is dense in $\Lip(M,Y)$.
	\end{proof}

A cautious inspection of the proofs in this section shows that if one starts with a Lipschitz compact map, then one also gets a Lipschitz compact map in each case. Then, every result has its own version for the Lip-BPB property for Lipschitz compact maps and for the density of strongly norm attaining Lipschitz compact maps. We summarize all the results in the following proposition.

\begin{prop}
Let $M$ be a pointed metric space and let $Y$ be a Banach space.
\begin{enumerate}[(a)]
\item Let $Y_1$ be an absolute summand of $Y$. If the pair $(M,Y)$ has the Lip-BPB property for Lipschitz compact maps with a function $\eps\longmapsto \eta(\eps)$, then so does $(M,Y_1)$.
\item If for all Banach spaces $Z$ the pair $(M,Z)$ has the Lip-BPB property for Lipschitz compact maps, then there exists a function $\eta(\varepsilon)$, which only depends on $M$, such that for every Banach space $Z$ the pair $(M,Z)$ has the Lip-BPB property for Lipschitz compact maps witnessed by the function $\eta(\varepsilon)$.
\item Let $Y_1$ be an absolute summand of $Y$. If the set $\SA_K(M,Y)$ is dense in $\Lipc(M,Y)$, then $\SA_K(M,Y_1)$ is dense in $\Lipc(M,Y_1)$.
\item If for some compact Hausdorff space $K$ the pair $(M,C(K,Y))$ has the Lip-BPB property for Lipschitz compact maps witnessed by a function $\eta(\varepsilon)$, then $(M,Y)$ has the Lip-BPB property for Lipschitz compact maps witnessed by the same function.
\item If $\SA_K(M,C(K,Y))$ is dense in $\Lipc(M,C(K,Y))$ for some compact Hausdorff space $K$, then $\SA_K(M,Y)$ is dense in $\Lipc(M,Y)$.
\end{enumerate}
\end{prop}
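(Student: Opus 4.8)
The plan is to observe that each of the five assertions is the exact ``Lipschitz compact'' analogue of a result already established in this section, and that the constructions carried out in the corresponding proofs send compact operators to compact operators. Concretely, items (a) and (c) mirror Propositions \ref{recdomain} and \ref{absteo}, items (d) and (e) mirror Proposition \ref{C(K)} and the proposition following it, and (b) mirrors the Corollary after Proposition \ref{recdomain}. Thus I would not redo any of the estimates on norms, on distances between molecules, or on the witnessing functions $\eta$ (these remain untouched); I would only check that at each step an operator assumed to be compact gives rise to a compact approximant. The single principle behind every such verification is that the class of compact operators is a two-sided operator ideal: a composition $A\circ \widehat{K}\circ B$ of a compact operator $\widehat{K}$ with bounded linear maps $A,B$ is again compact.

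For (a) and (c) I would run the proof of Proposition \ref{recdomain} (resp.\ \ref{absteo}) starting from a \emph{compact} $\widehat{F}_1\in\operatorname{L}(\F(M),Y_1)$ and forming $\widetilde{T}(x)=(\widehat{F}_1(x),0)$. Since $\widetilde{T}$ is $\widehat{F}_1$ followed by the isometric inclusion $Y_1\hookrightarrow Y_1\oplus_a Z=Y$, it is compact, so the hypothesis that $(M,Y)$ has the Lip-BPB property for Lipschitz compact maps (resp.\ that $\SA_K(M,Y)$ is dense) applies and yields a \emph{compact} approximant $\widehat{G}\in\operatorname{L}(\F(M),Y)$. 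The operator $\widehat{G}_1\in\operatorname{L}(\F(M),Y_1)$ manufactured in those proofs is $\widehat{G}$ composed with the bounded projection of $Y$ onto $Y_1$, hence compact; all the norm and molecule estimates are identical, giving the Lip-BPB property for Lipschitz compact maps for $(M,Y_1)$ (resp.\ density of $\SA_K(M,Y_1)$). Item (b) then follows from (a) exactly as the Corollary after Proposition \ref{recdomain} followed from that proposition: were the conclusion false one would obtain a sequence $(Y_n)$ witnessing arbitrarily small $\eta_n$, form $Y=\bigl[\bigoplus_{n}Y_n\bigr]_{c_0}$, use that $(M,Y)$ has the Lip-BPB property for Lipschitz compact maps with some function $\eta$, and invoke (a) to transfer $\eta$ unchanged to each summand $Y_n$, a contradiction.

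For (d) and (e) I would begin the proof of Proposition \ref{C(K)} (resp.\ its density counterpart) with a compact $\widehat{F}_1\in\operatorname{L}(\F(M),Y)$ and define $\widehat{F}\colon\F(M)\longrightarrow C(K,Y)$ by $[\widehat{F}(x)](t)=\widehat{F}_1(x)$. Its image $\widehat{F}(B_{\F(M)})$ consists of constant functions and is isometric to $\widehat{F}_1(B_{\F(M)})$, which is relatively compact, so $\widehat{F}$ is compact and the compact hypothesis on $(M,C(K,Y))$ produces a compact $\widehat{G}$. Finally $\widehat{G}_1(x)=[\widehat{G}(x)](t_1)$ is $\widehat{G}$ composed with the bounded evaluation operator $C(K,Y)\longrightarrow Y$, $f\longmapsto f(t_1)$, hence compact, and the remainder of the argument is unchanged. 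I do not expect any genuine obstacle here: the only points demanding care are precisely these compactness checks, the least transparent of which is that the constant-function embedding into $C(K,Y)$ preserves compactness.
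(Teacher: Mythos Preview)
Your proposal is correct and follows essentially the same approach as the paper: both proofs simply rerun the arguments of Propositions \ref{recdomain}, \ref{absteo}, \ref{C(K)} (and its density companion) and the Corollary after Proposition \ref{recdomain}, checking that the operators $\widetilde{T}$, $\widehat{F}$, $\widehat{G}_1$ built along the way remain compact. Your write-up is in fact slightly more detailed than the paper's, explicitly invoking the ideal property of compact operators and the constant-function embedding, but the strategy is identical.
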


\begin{proof}
	\begin{enumerate}[(a)]
		\item Looking at the proof of Proposition \ref{recdomain}, we see that if the operator $\widehat{F}_1$ is compact, then the operator $\widehat{T} \in \operatorname{L}(\mathcal{F}(M),Y_1)$ given by $\widehat{T}(x)=(\widehat{F}_1(x),0)$ for all $x \in \mathcal{F}(M)$ is also compact. Then, we just have to follow that proof using that $(M,Y)$ has the Lip-BPB property for Lipschitz compact maps.
		\item This is a direct consequence of the fact that the same function $\eta$ from the Lip-BPB property for Lipschitz compact maps of $(M,Y)$ works for the Lip-BPB property for Lipschitz compact maps of $(M,Y_1)$ for every summand $Y_1$ of $Y$.
		\item We just have to proceed as in part (a), but forgetting about the requirement on the molecules to satisfy $\|m-m'\|<\varepsilon$.
		\item Observe that if the operator $\widehat{F}_1$ considered in the proof of Proposition \ref{C(K)} is compact, then so is the operator $\widehat{F}$. Following that proof we see that if we apply our assumption, then the operator $\widehat{G}$ is compact. Consequently, the operator $\widehat{G}_1$, which satisfies the conditions we wanted, is also compact.
		\item This is a slight modification of the previous item. We just have to forget about the requirement on the molecules to satisfy $\|m-u\|<\varepsilon$.\qedhere
	\end{enumerate}
\end{proof}

Let $M$ be a pointed metric space, let $\{Y_i\}_{i\in I}$ be a family of Banach spaces and let $Y=[\bigoplus_{i\in I} Y_i]_{c_0}$ or $Y=[\bigoplus_{i\in I} Y_i]_{\ell_\infty}$. By Proposition~\ref{recdomain}, if the pair $(M,Y)$ has the Lip-BPB property, then all the pairs $(M,Y_i)$ have the Lip-BPB property witnessed by the same function. By Proposition~\ref{absteo}, if $\SA(M,Y)$ is dense in $\Lip(M,Y)$, then $\SA(M,Y_i)$ is dense in $\Lip(M,Y_i)$ for all $i\in I$. Our next aim is to show that the reversed results follow. We start with the Lip-BPB property.
	
		\begin{prop}\label{Lip-BPBp domain}
		Let $M$ be a pointed metric space, let $\{Y_i\}_{i\in I}$ be a family of Banach spaces, and let $Y$ be $Y=[\bigoplus_{i\in I} Y_i]_{c_0}$ or $Y=[\bigoplus_{i\in I} Y_i]_{\ell_\infty}$. Assume that $(M,Y_i)$ has the Lip-BPB property witnessed by a function $\eta_i(\varepsilon)$ for every $i \in I$. If $\inf\{\eta_i(\varepsilon)\colon i \in I\}>0$ for every $\varepsilon>0$, then $(M,Y)$ has the Lip-BPB property.
	\end{prop}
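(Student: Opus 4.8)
The plan is to exploit the coordinatewise description of operators into a $c_0$- or $\ell_\infty$-sum. Writing $\pi_i\colon Y\to Y_i$ for the natural (norm-one) projections, every $\widehat F\in\operatorname{L}(\F(M),Y)$ is determined by its coordinates $\widehat F_i=\pi_i\widehat F\in\operatorname{L}(\F(M),Y_i)$, and for both types of sum one has $\|\widehat F\|=\sup_{i}\|\widehat F_i\|$ and $\|\widehat F(x)\|=\sup_i\|\widehat F_i(x)\|$ for every $x$. The idea is to locate a single coordinate that already almost attains the norm, fix it up using the Lip-BPB property of $(M,Y_{i_0})$, and leave the remaining coordinates untouched.

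Concretely, given $\varepsilon>0$ I would set $\varepsilon'=\varepsilon/2$ and declare $\eta(\varepsilon)=\min\{\varepsilon/2,\,\inf_i\eta_i(\varepsilon/2)\}$, which is strictly positive precisely because of the hypothesis $\inf_i\eta_i(\varepsilon/2)>0$. Now take a norm-one $\widehat F\in\operatorname{L}(\F(M),Y)$ and $m\in\Mol(M)$ with $\|\widehat F(m)\|>1-\eta(\varepsilon)$. Since $\|\widehat F(m)\|=\sup_i\|\widehat F_i(m)\|$, there is an index $i_0$ with $\|\widehat F_{i_0}(m)\|>1-\eta(\varepsilon)$; in particular $0<1-\eta(\varepsilon)<\|\widehat F_{i_0}\|\le\|\widehat F\|=1$.

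The only real wrinkle is that $\widehat F_{i_0}$ need not have norm exactly one, so the Lip-BPB property cannot be applied to it verbatim. I would remedy this by normalizing: put $\widehat F_{i_0}'=\widehat F_{i_0}/\|\widehat F_{i_0}\|$, so that $\|\widehat F_{i_0}'\|=1$ and $\|\widehat F_{i_0}'(m)\|\ge\|\widehat F_{i_0}(m)\|>1-\eta(\varepsilon)\ge 1-\eta_{i_0}(\varepsilon')$. Applying the Lip-BPB property of $(M,Y_{i_0})$ to $\widehat F_{i_0}'$ and $m$ then yields $\widehat G_{i_0}\in\operatorname{L}(\F(M),Y_{i_0})$ and $u\in\Mol(M)$ with $\|\widehat G_{i_0}(u)\|=\|G_{i_0}\|_L=1$, $\|m-u\|<\varepsilon'$, and $\|\widehat G_{i_0}-\widehat F_{i_0}'\|<\varepsilon'$; the triangle inequality together with $\|\widehat F_{i_0}'-\widehat F_{i_0}\|=1-\|\widehat F_{i_0}\|\le\eta(\varepsilon)$ gives $\|\widehat G_{i_0}-\widehat F_{i_0}\|<\varepsilon'+\eta(\varepsilon)\le\varepsilon$.

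Finally I would assemble the approximant $\widehat G\in\operatorname{L}(\F(M),Y)$ by prescribing $\pi_{i_0}\widehat G=\widehat G_{i_0}$ and $\pi_i\widehat G=\widehat F_i$ for $i\neq i_0$. In the $c_0$-case one checks that $\widehat G$ still takes values in the sum, since altering one coordinate does not affect that the coordinates of $\widehat G(x)$ tend to $0$. Because $\|\widehat G_i\|=\|\widehat F_i\|\le 1$ for $i\neq i_0$ while $\|\widehat G_{i_0}\|=1$, we get $\|\widehat G\|=\sup_i\|\widehat G_i\|=1$, and the $i_0$-coordinate of $\widehat G(u)$ has norm $1$, so $\widehat G$ attains its norm at $u$. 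Moreover $\|\widehat G-\widehat F\|=\sup_i\|\widehat G_i-\widehat F_i\|=\|\widehat G_{i_0}-\widehat F_{i_0}\|<\varepsilon$ and $\|m-u\|<\varepsilon'<\varepsilon$, as required. The main point to handle carefully is the bookkeeping forced by the normalization step, together with the observation that $\|\widehat F-\widehat G\|$ collapses to the single modified coordinate; everything else is routine.
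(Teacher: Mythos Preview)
Your proof is correct and follows essentially the same route as the paper: single out a coordinate $i_0$ where $\|\widehat F_{i_0}(m)\|>1-\eta(\varepsilon)$, apply the Lip-BPB property of $(M,Y_{i_0})$ there, and rebuild $\widehat G$ by modifying only that coordinate. In fact you are slightly more careful than the paper, which applies the Lip-BPB property directly to $Q_k\widehat F$ without explicitly addressing that $\|Q_k\widehat F\|$ need not equal $1$; your normalization step and the adjusted choice $\eta(\varepsilon)=\min\{\varepsilon/2,\inf_i\eta_i(\varepsilon/2)\}$ handle this cleanly.
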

	
	\begin{proof}
Fix $\varepsilon>0$, take $\eta(\varepsilon):=\inf\{\eta_i(\varepsilon)\colon i \in I\}>0$ and note that we have $\eta_i(\varepsilon)\geq \eta(\varepsilon)$ for every $i \in I$. Consider $Q_i \colon Y \longrightarrow Y_i$ the natural projection and $E_i\colon Y_i \longrightarrow Y$ the natural embedding for every $i \in I$. Take $\widehat{F} \in \operatorname{L}(\mathcal{F}(M),Y)$ with $\|F\|_L=1$ and $m \in \Mol(M)$ such that
		\[ \|\widehat{F}(m)\|>1-\eta(\varepsilon).\]
		Then, there exists $k \in I$ so that $\|Q_k\widehat{F}(m)\|>1-\eta(\varepsilon)$. By hypothesis, there exist $\widehat{G}_k \in \operatorname{L}(\mathcal{F}(M),Y_k)$ and $u \in \Mol(M)$ satisfying
		\[ \|\widehat{G}_k(u)\|=\|G_k\|_L=1,\quad \|Q_k\widehat{F}-\widehat{G}_k\|<\varepsilon, \quad \|m-u\|<\varepsilon.\]
		Now, let us define $\widehat{G} \colon \mathcal{F}(M) \longrightarrow Y$ given by
		\[ \widehat{G}(x)=\sum_{i\neq k} E_i(Q_i(\widehat{F}))(x) + E_k \widehat{G}_k(x) \quad \forall \, x \in \F(M). \]
		Then, we have that $\|G\|_L\leq 1$ and $\|\widehat{G}(u)\|\geq \|\widehat{G}_k(u)\|=1$. Therefore, $\widehat{G}$ attains its norm at $u \in \Mol(M)$. Finally, note that
		\[ \|F-G\|_L=\sup\{\|Q_i(\widehat{F}-\widehat{G})\|\colon i \in I\}=\|Q_k(\widehat{F}-\widehat{G})\|<\varepsilon,\]
		that is, $(M,Y)$ has the Lip-BPB property.
	\end{proof}

 	It is possible to give a result analogous to Proposition \ref{Lip-BPBp domain} for the density of $\SA(M,Y)$.
	
	\begin{prop}\label{c_0den}
Let $M$ be a pointed metric space, let $\{Y_i\}_{i\in I}$ be a family of Banach spaces, and let $Y$ be $Y=[\bigoplus_{i\in I} Y_i]_{c_0}$ or $Y=[\bigoplus_{i\in I} Y_i]_{\ell_\infty}$. If $\overline{\SA(M,Y_i)}=\Lip(M,Y_i)$ for every $i \in I$, then $$\overline{\SA(M,Y)}=\Lip(M,Y).$$
	\end{prop}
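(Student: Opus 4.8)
The plan is to mimic the proof of Proposition~\ref{Lip-BPBp domain}, replacing the Lip-BPB property of the coordinate spaces by their density hypothesis and simply dropping the control on the distance between molecules. Let $Q_i\colon Y \longrightarrow Y_i$ and $E_i\colon Y_i \longrightarrow Y$ denote the natural projections and embeddings. The two cases are handled simultaneously because in both the $c_0$ and the $\ell_\infty$ sum the norm is given coordinatewise as a supremum, so that $\|\widehat{H}\| = \sup_i\|Q_i\widehat{H}\|$ for every $\widehat{H} \in \operatorname{L}(\mathcal{F}(M),Y)$, and because rescaling or modifying finitely many coordinates of such an operator preserves membership in $\operatorname{L}(\mathcal{F}(M),Y)$ (for the $c_0$-sum, each image sequence stays in $c_0$). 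Fixing $\widehat{F} \in \operatorname{L}(\mathcal{F}(M),Y)$ and $\eps>0$, after normalising and treating the trivial case $F=0$ separately, I would assume $\|F\|_L = 1$.

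First I would single out a dominant coordinate: since $\sup_i\|Q_i\widehat{F}\| = 1$, there is $k \in I$ with $\|Q_k\widehat{F}\| > 1 - \eps/3$. Then the density hypothesis $\overline{\SA(M,Y_k)} = \Lip(M,Y_k)$ lets me choose $\widehat{G}_k \in \operatorname{L}(\mathcal{F}(M),Y_k)$ that strongly attains its norm at some $u \in \Mol(M)$ with $\|Q_k\widehat{F} - \widehat{G}_k\| < \eps/3$. Writing $c := \|G_k\|_L = \|\widehat{G}_k(u)\|$, the triangle inequality gives $c > 1 - 2\eps/3$.

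Next I would assemble the global operator by keeping the $k$-th coordinate equal to $\widehat{G}_k$ and slightly shrinking the rest: setting $\alpha := \min\{1,c\}$, define $\widehat{G} \in \operatorname{L}(\mathcal{F}(M),Y)$ by $Q_k\widehat{G} = \widehat{G}_k$ and $Q_i\widehat{G} = \alpha\, Q_i\widehat{F}$ for $i \neq k$. The choice of $\alpha$ guarantees $\alpha\|Q_i\widehat{F}\| \leq c$ for every $i \neq k$, whence $\|\widehat{G}\| = c$ and, evaluating at $u$, $\|\widehat{G}(u)\| = c = \|\widehat{G}\|$; thus $\widehat{G} \in \SA(M,Y)$. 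For the approximation I would estimate $\|F - G\|_L = \sup_i\|Q_i(\widehat{F}-\widehat{G})\|$: the $k$-th term equals $\|Q_k\widehat{F} - \widehat{G}_k\| < \eps/3$, while for $i \neq k$ one has $(1-\alpha)\|Q_i\widehat{F}\| \leq 1-\alpha = \max\{0,1-c\} < 2\eps/3$, so that $\|F - G\|_L < \eps$, which finishes the argument.

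The only genuinely delicate point is ensuring that the perturbed operator attains its norm \emph{globally}, at the molecule $u$, rather than merely on its $k$-th coordinate; this forces $\widehat{G}_k(u)$ to dominate all the other coordinates at $u$, and the shrinking factor $\alpha$ is exactly what secures this domination. It is the analogue of the factor $(1-\eps)$ placed on the non-selected summands in the proof of Theorem~\ref{l1SNA}, and it is needed here precisely because, in contrast with Proposition~\ref{Lip-BPBp domain}, the density hypothesis produces a selected coordinate of norm only close to (rather than exactly) $1$.
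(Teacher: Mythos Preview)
Your proof is correct and follows essentially the same strategy as the paper: isolate a near-dominant coordinate $k$, approximate $Q_k\widehat{F}$ by a strongly norm attaining map in $Y_k$, and reassemble the global operator keeping the other coordinates (almost) intact. The only minor difference is in how the $k$-th coordinate is made to dominate: the paper simply normalises the approximant $\widehat{G}_k$ to have $\|G_k\|_L=1$ and leaves the other coordinates untouched, whereas you keep $\|G_k\|_L=c$ and shrink the remaining coordinates by the factor $\alpha=\min\{1,c\}$; both devices serve the identical purpose and the rest of the argument coincides.
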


	\begin{proof}
		For each $i \in I$, consider $Q_i \colon Y \longrightarrow Y_i$ the natural projection and $E_i\colon Y_i \longrightarrow Y$ the natural embedding. Fix $\varepsilon>0$ and $\widehat{F} \in \operatorname{L}(\mathcal{F}(M),Y)$ with $\|F\|_L=1$. There exists $k \in I$ so that $\|Q_k \widehat{F}\|>1-\frac{\varepsilon}{2}$. Then, since $\overline{\SA(M,Y_k)}=\Lip(M,Y_K)$ we may find $G_k\in \Lip(M,Y_k)$ and $u \in \Mol(M)$ such that
		\[ \|\widehat{G}_k(u)\|=\|G_k\|_L=1,\quad \|\widehat{G}_k-Q_k\widehat{F}\|<\varepsilon.\]
			Now, let us define $\widehat{G} \colon \mathcal{F}(M) \longrightarrow Y$ given by
		\[ \widehat{G}(x)=\sum_{i\neq k} E_i(Q_i(\widehat{F}))(x) + E_k \widehat{G}_k(x) \quad \forall \, x \in \F(M). \]
		Then, we have that $\|G\|_L\leq 1$ and $\|\widehat{G}(u)\|\geq \|\widehat{G}_k(u)\|=1$. Therefore, $\widehat{G}$ attains its norm at $u$. Finally, note that
		\[ \|F-G\|_L=\sup\{\|Q_i(\widehat{F}-\widehat{G})\|\colon i \in I\}=\|Q_k(\widehat{F}-\widehat{G})\|<\varepsilon.\qedhere\]
	\end{proof}

A sight to the above two proofs shows that the analogous results for Lipschitz compact maps are also valid.

	\begin{prop}\label{prop6.15}
Let $M$ be a pointed metric space, let $\{Y_i\}_{i\in I}$ be a family of Banach spaces, and let $Y$ be $Y=[\bigoplus_{i\in I} Y_i]_{c_0}$ or $Y=[\bigoplus_{i\in I} Y_i]_{\ell_\infty}$. Assume that for each $i \in I$ the pair $(M,Y_i)$ has the Lip-BPB property for Lipschitz compact maps witnessed by a function $\eta_i(\varepsilon)$. If $\inf\{\eta_i(\varepsilon)\colon i \in I\}>0$ for every $\varepsilon>0$, then $(M,Y)$ has the Lip-BPB property for Lipschitz compact maps.
	\end{prop}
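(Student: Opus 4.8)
The plan is to follow the proof of Proposition~\ref{Lip-BPBp domain} line by line, the sole additional task being to check that every operator produced along the way is compact. First I would fix $\varepsilon>0$, set $\eta(\varepsilon):=\inf\{\eta_i(\varepsilon)\colon i\in I\}$ (positive by hypothesis, with $\eta_i(\varepsilon)\geq\eta(\varepsilon)$ for all $i$), and denote by $Q_i\colon Y\longrightarrow Y_i$ and $E_i\colon Y_i\longrightarrow Y$ the natural norm-one projections and embeddings. Given a compact operator $\widehat{F}\in\operatorname{L}(\mathcal{F}(M),Y)$ with $\|F\|_L=1$ and a molecule $m\in\Mol(M)$ with $\|\widehat{F}(m)\|>1-\eta(\varepsilon)$, the definition of the $c_0$- or $\ell_\infty$-norm yields an index $k\in I$ with $\|Q_k\widehat{F}(m)\|>1-\eta(\varepsilon)$, exactly as in the non-compact case.

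The key observation is that $Q_k\widehat{F}$ is compact, being the composition of the compact operator $\widehat{F}$ with the bounded projection $Q_k$. This allows me to invoke the Lip-BPB property \emph{for Lipschitz compact maps} of $(M,Y_k)$ (in place of its general version) and obtain a \emph{compact} operator $\widehat{G}_k\in\operatorname{L}(\mathcal{F}(M),Y_k)$ together with a molecule $u\in\Mol(M)$ satisfying $\|\widehat{G}_k(u)\|=\|G_k\|_L=1$, $\|Q_k\widehat{F}-\widehat{G}_k\|<\varepsilon$, and $\|m-u\|<\varepsilon$. I would then define $\widehat{G}\colon\F(M)\longrightarrow Y$ by $\widehat{G}(x)=\sum_{i\neq k}E_i(Q_i\widehat{F})(x)+E_k\widehat{G}_k(x)$, equivalently $\widehat{G}=\widehat{F}-E_kQ_k\widehat{F}+E_k\widehat{G}_k$. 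The estimates $\|G\|_L\leq1$, $\|\widehat{G}(u)\|\geq\|\widehat{G}_k(u)\|=1$, and $\|F-G\|_L=\|Q_k(\widehat{F}-\widehat{G})\|<\varepsilon$ are copied verbatim from the proof of Proposition~\ref{Lip-BPBp domain}, so $\widehat{G}$ strongly attains its norm at $u$ and approximates $\widehat{F}$ as required.

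The only genuinely new step, and the point I expect to be the main obstacle, is verifying that $\widehat{G}$ is compact. Using the expression $\widehat{G}=\widehat{F}-E_kQ_k\widehat{F}+E_k\widehat{G}_k$, the operator $\widehat{F}$ is compact by assumption, $E_kQ_k\widehat{F}$ is compact (the bounded map $E_kQ_k$ composed with the compact $\widehat{F}$), and $E_k\widehat{G}_k$ is compact (the compact $\widehat{G}_k$ composed with the bounded $E_k$); since the compact operators form a linear subspace of $\operatorname{L}(\F(M),Y)$, the operator $\widehat{G}$ is compact. Intuitively, $\widehat{G}$ differs from the compact operator $\widehat{F}$ only on the $k$-th coordinate, where it is corrected by the compact operator $\widehat{G}_k$, so compactness cannot be lost. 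This shows that $(M,Y)$ has the Lip-BPB property for Lipschitz compact maps witnessed by $\eta(\varepsilon)$.
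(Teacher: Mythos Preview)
Your proposal is correct and follows exactly the approach indicated in the paper: the paper simply remarks that ``a sight to the above two proofs shows that the analogous results for Lipschitz compact maps are also valid,'' and you have faithfully spelled out this inspection, including the verification via $\widehat{G}=\widehat{F}-E_kQ_k\widehat{F}+E_k\widehat{G}_k$ that compactness is preserved.
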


	\begin{prop}\label{prop6.16}
Let $M$ be a pointed metric space, let $\{Y_i\}_{i\in I}$ be a family of Banach spaces, and let $Y$ be $Y=[\bigoplus_{i\in I} Y_i]_{c_0}$ or $Y=[\bigoplus_{i\in I} Y_i]_{\ell_\infty}$. If $\SA_K(M,Y_i)$ is dense in $\Lipc(M,Y_i)$ for every $i \in I$, then $\SA_K(M,Y)$ is dense in $\Lipc(M,Y)$.
	\end{prop}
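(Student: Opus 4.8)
The plan is to reproduce the proof of Proposition~\ref{c_0den} line by line, starting now from a Lipschitz \emph{compact} map, and to record at each step that compactness is preserved; this is exactly what the sentence preceding the statement announces. Concretely, I would fix $\varepsilon>0$ and a norm-one $F\in\Lipc(M,Y)$, introduce the natural projections $Q_i\colon Y\longrightarrow Y_i$ and embeddings $E_i\colon Y_i\longrightarrow Y$, and use that the norm on a $c_0$- or $\ell_\infty$-sum is the supremum of the coordinate norms to find $k\in I$ with $\|Q_k\widehat F\|>1-\varepsilon/2$.

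The first place where compactness intervenes is in applying the hypothesis: since $\widehat F$ is compact and $Q_k$ is bounded, the composition $Q_k\widehat F$ is compact, so its associated Lipschitz map lies in $\Lipc(M,Y_k)$. Using $\overline{\SA_K(M,Y_k)}=\Lipc(M,Y_k)$, I can then select a \emph{compact} operator $\widehat G_k\in\operatorname{L}(\F(M),Y_k)$ and a molecule $u\in\Mol(M)$ with $\|\widehat G_k(u)\|=\|G_k\|_L=1$ and $\|\widehat G_k-Q_k\widehat F\|<\varepsilon$. Defining $\widehat G\colon\F(M)\longrightarrow Y$ by $\widehat G(x)=\sum_{i\neq k}E_iQ_i\widehat F(x)+E_k\widehat G_k(x)$ as in Proposition~\ref{c_0den}, the three estimates $\|G\|_L\leq 1$, $\|\widehat G(u)\|\geq\|\widehat G_k(u)\|=1$, and $\|F-G\|_L=\|Q_k\widehat F-\widehat G_k\|<\varepsilon$ transfer verbatim, so $\widehat G$ attains its norm at $u$ and lies within $\varepsilon$ of $\widehat F$.

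The single genuinely new point, and the one I expect to require the only real care, is checking that $\widehat G$ is compact. I would handle it by rewriting $\widehat G=\widehat F+E_k(\widehat G_k-Q_k\widehat F)$, which is valid because $\widehat G$ and $\widehat F$ have identical coordinates for every $i\neq k$ and differ only in the $k$-th slot. Then $\widehat F$ is compact by hypothesis, $Q_k\widehat F$ and $\widehat G_k$ are compact, and $E_k$ is bounded, so $E_k(\widehat G_k-Q_k\widehat F)$ is compact; hence $\widehat G$ is a sum of compact operators and therefore compact, i.e.\ $G\in\Lipc(M,Y)$. This places $G$ in $\SA_K(M,Y)$ and yields the density. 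Since the metric and norm estimates are word-for-word those of Proposition~\ref{c_0den}, no obstacle remains beyond this compactness bookkeeping.
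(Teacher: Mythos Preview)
Your proposal is correct and follows exactly the approach the paper intends: the paper merely remarks that ``a sight to the above two proofs shows that the analogous results for Lipschitz compact maps are also valid,'' and you have carried out precisely this adaptation of the proof of Proposition~\ref{c_0den}, with the only new ingredient being the compactness check. Your decomposition $\widehat G=\widehat F+E_k(\widehat G_k-Q_k\widehat F)$ is a clean way to verify that $\widehat G$ is compact, and nothing further is needed.
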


\vspace*{0.5cm}

\noindent \textbf{Acknowledgments:\ } The authors would like to thank A.~Avil\'{e}s, L.~C.~Garc\'{\i}a-Lirola, V.~Kadets, P.~Koszmider, and A.~Rueda Zoca for kindly answering some inquiries about the contents of the paper. They also thanks G.~Choi, Y.~S.~Choi, and M.~Jung for some comments on the content of the paper.
\newpage

\end{document}